\numberwithin{equation}{section}
\newcommand{\Rho}{P}
\newcommand{\seqi}{{\mathsf j}} %
\newcommand{\seqii}{{\mathsf k}} %
\newcommand{\R}{{\mathbb R}}
\newcommand{\Rd}{{\R^d}}
\DeclareMathOperator{\interior}{int}
\DeclareMathOperator*{\essinf}{ess\,inf}
\DeclareMathOperator*{\esssup}{ess\,sup}
\newcommand{\defeq}{\coloneqq}
\newcommand{\mob}{\mathrm{m}}
\newcommand{\mobup}{\mathrm{m}_{\uparrow}}
\newcommand{\mobdown}{\mathrm{m}_{\downarrow}}
\newcommand{\mobupwind}{\mathrm{m}_{\mathsf w}}
\DeclareMathOperator{\dist}{d}
\newtheorem{theorem}{Theorem}[section]
\newtheorem{lemma}[theorem]{Lemma}
\newtheorem{corollary}[theorem]{Corollary}
\theoremstyle{definition}
\newtheorem{remark}[theorem]{Remark}
\newlist{enumeratesteps}{enumerate}{2}
\setlist[enumeratesteps,1]{
    label=\textit{Step \arabic*\protect\thisstep},
    ref=\arabic*,
    align=left, leftmargin=0pt, labelindent=!,
    listparindent=\parindent, labelwidth=0pt, itemindent=!
}
\setlist[enumeratesteps,2]{
    label= \textit{Step \theenumeratestepsi.\alph*\protect\thisstep},
    ref=\theenumeratestepsi.\alph*,
    align=left, leftmargin=0pt, labelindent=\parindent,
    listparindent=\parindent, labelwidth=0pt, itemindent=!
}
\newcommand{\step}[1][]{%
    \if\relax\detokenize{#1}\relax
        \def\thisstep{.}%
    \else
        \def\thisstep{: \protect #1.}%
    \fi
    \item}
\Crefname{enumeratestepsi}{Step}{Steps}
\Crefname{enumeratestepsii}{Step}{Steps}
\newlist{enumeratecases}{enumerate}{2}
\setlist[enumeratecases,1]{
    label=\textit{Case \arabic*\protect\thiscase},
    ref=\arabic*,
    align=left, leftmargin=0pt, labelindent=!,
    listparindent=\parindent, labelwidth=0pt, itemindent=!
}
\setlist[enumeratecases,2]{
    label= \textit{Case \theenumeratecasesi.\alph*\protect\thiscase},
    ref=\theenumeratecasesi.\alph*,
    align=left, leftmargin=0pt, labelindent=\parindent,
    listparindent=\parindent, labelwidth=0pt, itemindent=!
}
\newcommand{\case}[1][]{%
    \if\relax\detokenize{#1}\relax
        \def\thiscase{.}%
    \else
        \def\thiscase{: \protect #1.}%
    \fi
    \item}
\Crefname{enumeratecasesi}{Case}{Cases}
\Crefname{enumeratecasesii}{Case}{Cases}
\title{Convergence of a finite-volume scheme for \\ aggregation-diffusion equations \\ with saturation}
\author{David Gómez-Castro\thanks{Departamento de Matemáticas, Universidad Autónoma de Madrid, Spain \\ \& Instituto de Ciencias Matemáticas, Consejo Superior de Investigaciones Científicas, Spain}}
\begin{document}

\maketitle

\begin{abstract}
    In [Bailo, Carrillo, Hu. SIAM J. Appl. Math. 2023] the authors introduce a finite-volume method for aggregation-diffusion equations with non-linear mobility.
    In this paper we prove convergence of this method using an Aubin--Simons compactness theorem due to Gallouët and Latché. We use suitable discrete $H^1$ and $W^{-1,1}$ discrete norms.

    We provide two convergence results.
    A first result shows convergence with general entropies ($U$) (including singular and degenerate) if the initial datum does not have free boundaries, the mobility is Lipschitz, and the confinement ($V$) and aggregation ($K$) potentials are $W^{2,\infty}_0$.
    A second result shows convergence when the initial datum has free boundaries, mobility is just continuous, and $V$ and $K$ are $W^{1,\infty}$, but under the assumption that the entropy $U$ is $C^1$ and strictly convex.

    \medskip

    \noindent \textbf{AMS Subject Classification.} 65M08; 35Q70; 35Q92; 45K05.

    \noindent \textbf{Keywords.} Gradient flows; Conservation laws; Finite-volume methods; Energy dissipation; \\ Aggregation--Diffusion equations.
\end{abstract}

\setcounter{tocdepth}{1}
\tableofcontents

\section{Introduction and main results}
\begin{subequations}
    The aim of this paper is to prove convergence of a finite-volume numerical scheme for the family of problems
    \label{eq:main problem}
    \begin{align}
        \partial_t \rho & = \nabla \cdot ( \mob(\rho) \nabla \xi) \quad \text{ if } (t,x) \in (0,T) \times \Omega,
    \end{align}
    posed in a bounded open set $\Omega \subset \mathbb R^d$, and subject to no-flux conditions on the boundary, i.e.,
    \begin{equation}
        \mob(\rho) \nabla \xi \cdot n = 0 \quad \text{ if } (t,x) \in (0,T) \times \partial \Omega .
    \end{equation}
    We consider the case of aggregation-diffusion equations, i.e.,
    \begin{equation}
        \label{eq:xi}
        \xi (t,x) = U'(\rho(t,x)) + V(x) + \int_\Omega K(x,y) \rho(t,y) dy,
    \end{equation}
    where $U$ is convex, $V$ and $K$ are regular enough and $K$ is symmetric.
    Naturally, we impose an initial condition
    \begin{align}
        \rho(0,x) & = \rho_0(x) \quad \text{ if } x \in \Omega.
    \end{align}
\end{subequations}
We focus on the case of non-linear mobility of saturation type, i.e., \begin{equation}
    \text{there exists $\alpha > 0$ such that }
    \mob(0) = \mob(\alpha) = 0.
\end{equation}
We will discuss the case $\mob(s) = \mobup(s) \mobdown(s)$ where $\mobup$ is non-decreasing and $\mobdown$ is non-increasing, which is fairly general as given by \Cref{lem:mob decomposition} below.
Equation \eqref{eq:main problem} can be seen, when $K(x,y) = K(y,x)$, as formal Wasserstein-type gradient flow (see \cite{CarrilloLisiniSavareSlepcev2010}) of the free energy
\begin{equation}
    \label{eq:free energy}
    \mathcal F(\rho) = \int_\Omega U(\rho)  + \int_\Omega V \rho + \frac 1 2 \int_\Omega \int_\Omega K(x,y) \rho(x) \rho(y) dx dy.
\end{equation}
Notice that this energy is such that $\xi = \frac{\delta \mathcal F}{\delta \rho}$.

\bigskip

The family of equations \eqref{eq:main problem} has been significantly studied in different directions.
For a long discussion and literature on the case of linear mobility $\mob(\rho) = \rho$ see the survey \cite{gomez-castroBeginnersGuideAggregationdiffusion2024} and the references therein.
Power-type nonlinear mobilities appear, for example, in the porous-medium equation with non-local pressure, see \cite{stan2019ExistenceWeakSolutions,stanFiniteInfiniteSpeed2016,CarrilloGomezCastroVazquez2022AIHP,CarrilloGomezCastroVazquez2022ANONA}.
For an extensive discussion on saturation-type problems we point the reader to the introduction of \cite{BailoCarrilloHu2023}.

\bigskip

In this article we discuss a scheme in general dimension $d\ge 1$ which is the natural extension of previous schemes in the literature:
for linear mobility $\mob(\rho) = \rho$ see \cite{CarrilloChertockHuang2015,BailoCarrilloHu2020},
for nonlinear mobility $\mob(\rho) = \rho \mobdown(\rho)$ see \cite{BailoCarrilloHu2023},
and for general $\mob(\rho) = \mobup(\rho) \mobdown(\rho)$ but $K = 0$ and $d=1$ see \cite{CarrilloFernandez-JimenezGomez-Castro}.
The explicit version of the numerical scheme discussed in this paper was shown to convergence for the porous-medium equation with non-local pressure in dimension $d=1$ in \cite{deltesoConvergentFiniteDifferencequadrature2023}
(see also \cite{CarrilloGomezCastroVazquez2022AIHP,CarrilloGomezCastroVazquez2022ANONA}).
For a finite-difference scheme for the case of linear mobility see \cite{BurgerInzunzaMuletVillada2019}.
For particle-based methods in dimension $d=1$ see \cite{FrancescoRosini2015} and for $\mob(\rho) = \rho \mobdown (\rho)$ see \cite{daneriDeterministicParticleApproximation2020,DiFrancescoFagioliRadici2019,dimarinoOptimalTransportNonlinear2022,marconiStabilityQuasientropySolutions2022,radiciDeterministicParticleMethod2024}.

\medskip

The main reference for convergence results of this type of finite-volume schemes is \cite{BailoCarrilloMurakawaSchmidtchen2020}.
In that paper the authors study the case of linear mobility $\mob(\rho) = \rho$, i.e., no saturation is present. In \cite{BailoCarrilloMurakawaSchmidtchen2020} the authors deal with the problem in $d=1$ (and $d > 1$ by dimensional splitting) and assume that $U \in C^2([0,\infty))$, $V, K$ in $C^2$, $U'' > 0$ in $(0,\infty)$, and $s^{-1} U'' \in L^1_{loc}([0,\infty))$. Their proof is based on a Fréchet--Kolmogorov argument. In this paper we will use an Aubin--Simons theorem introduced in \cite{Gallouet2012}.

\subsubsection*{Numerical scheme}
For our scheme we assume that the mobility can be decomposed as
\begin{equation}
    \label{eq:mobility decomposition}
    \begin{aligned}
        \mob(\rho) & = \mobup (\rho) \mobdown(\rho) \quad \text{ for all } \rho \in [0,\alpha],
        \\
                   &
        \mobup \text{ non-decreasing, }
        \mobdown \text{ non-increasing, and }
        \\
                   & \mobup(0) = \mobdown(\alpha) = 0  .
    \end{aligned}
\end{equation}
This decomposition holds for a fairly general setting due to \Cref{lem:mob decomposition} (a minor improvement over the similar result in \cite{CarrilloFernandez-JimenezGomez-Castro}).
With this decomposition we define the up-winding of the mobility
\begin{equation}
    \mobupwind(a,b) = \mobup(a) \mobdown (b), \qquad \text{for } a,b \in [0,\alpha].
\end{equation}
We fix a spatial step $\bm h = (h_1, \cdots, h_d)$  with $h_k > 0$, and we will use the spatial mesh
\begin{align*}
    \bm x^{(\bm h)}_{\bm i} \defeq (i h_1, \cdots, i_d h_d), \qquad
    Q^{(\bm h)}_{\bm i}\defeq \prod_{k=1}^d \Big( (i_k - \tfrac {1} 2)h_k , (i_k + \tfrac {1} 2)h_k \Big), \qquad Q^{(\bm h)} \defeq Q^{(\bm h)}_{\bm 0}.
\end{align*}
We introduce the set of admissible indexes and the discrete domain
\begin{equation}
    \bm I_{\bm h}(\Omega)
    \defeq \left\{ \bm i \in \mathbb Z^d : \overline{Q_{\bm i}^{(\bm h)}} \subset \Omega \right\}
    \qquad
    \text{ and }
    \qquad
    \Omega_{\bm h} \defeq \interior\left( \bigcup_{\bm i \in \bm I_{\bm h}} \overline{Q^{(\bm h)}_{\bm i}} \right).
\end{equation}
Following the idea in \cite{BailoCarrilloHu2023}, we consider the implicit finite-volume scheme on the grid $\bm I = \bm I_{\bm h} (\Omega)$
\begin{subequations}
    \label{eq:scheme}
    \begin{equation}
        \label{eq:scheme equation}
        \frac{\Rho_{\bm i}^{n+1} - \Rho_{\bm i}^n}{\tau}
        =
        - \sum_{k=1}^d \frac{F_{\bm i + \frac 1 2 \bm e_k}^{n+1}  - F_{\bm i - \frac 1 2 \bm e_k}^{n+1} }{h_k}, \qquad \text{for all } \bm i \in \bm I,
    \end{equation}
    where
    \begin{equation}
        \label{eq:scheme F,v,xi}
        \begin{aligned}
            F_{\bm i + \frac 1 2 \bm e_k}^{n+1}
             & \defeq
            \begin{dcases}
                \mobupwind(\Rho_{\bm i}^{n+1} , \Rho_{\bm i + e_k}^{n+1} ) (v_{\bm i + \frac 1 2 \bm e_k}^{n+1})_+                                                            \\
                \qquad + \mobupwind(\Rho_{\bm i + e_k}^{n+1}, \Rho_{\bm i}^{n+1}) (v_{\bm i + \frac 1 2 \bm e_k}^{n+1})_- & \text{if } {\bm i}, {\bm i + \bm e_k} \in \bm I , \\
                0                                                                                                         & \text{for all other } \bm i \in \mathbb Z^d,
            \end{dcases}
            \\
            v_{\bm i + \frac 1 2 \bm e_k}^{n+1}
             & \defeq
            \begin{dcases}
                -\frac{\xi_{\bm i + \bm e_k}^{n+1}-\xi_{\bm i}^{n+1}}{h_k} & \text{if } {\bm i}, {\bm i + \bm e_k} \in \bm I, \\
                0                                                          & \text{for all other } \bm i \in \mathbb Z^d,
            \end{dcases}
            \\
            \xi_{\bm i}^{n+1}
             & \defeq
            U'(\Rho_{\bm i}^{n+1}) + V_{\bm i} + |Q^{(\bm h)}| \sum_{\bm j \in \bm I} K_{\bm i, \bm j} \Rho_{\bm j}^{n + \frac 1 2}, \qquad \text{for all } \bm i \in \bm I,
        \end{aligned}
    \end{equation}
    where we use the convention $a_+ = \max\{a,0\}$ and $a_- = \min\{a,0\}$ so $a = a_+ + a_-$.
    In order to have decay of the energy for general $K$ we take
    \begin{equation}
        \label{eq:scheme P n + 1/2}
        \bm P^{n+\frac 1 2} \defeq \frac{\bm P^{n+1} + \bm P^n}{2}.
    \end{equation}
    To show convergence we select
    \begin{equation}
        \label{eq:scheme choice V and K}
        V_{\bm i} \defeq V(\bm x_{\bm i}^{(\bm h)}), \qquad K_{\bm i, \bm j} \defeq  K(\bm x_{\bm i}^{(\bm h)}, \bm x_{\bm j}^{(\bm h)}).
    \end{equation}
\end{subequations}
As discussed in \cite{CarrilloFernandez-JimenezGomez-Castro}, this choice of up-winding makes the scheme monotone when $K = 0$.
Occasionally, it will be useful to extract $v$ from $F$, and hence we introduce
\begin{equation}
    \label{eq:theta}
    \theta_{\bm i + \frac 1 2 \bm e_k}^{n+1} \defeq \mobupwind(\Rho_{\bm i}^{n+1} , \Rho_{\bm i + e_k}^{n+1} ) s_+ (v_{\bm i + \frac 1 2 \bm e_k}^{n+1}) + \mobupwind(\Rho_{\bm i + e_k}^{n+1}, \Rho_{\bm i}^{n+1}) s_- (v_{\bm i + \frac 1 2 \bm e_k}^{n+1}),
\end{equation}
where $s_+(a) = 1$ if $a > 0$ and $s_+(a) =0$ otherwise, and $s_-(a) = 1$ if $a < 0$ and $s_-(a) = 0$ otherwise. With this choice we can write the decomposition
\begin{equation*}
    F_{\bm i + \frac 1 2 \bm e_k}^{n+1} = \theta_{\bm i + \frac 1 2 \bm e_k}^{n+1} v_{\bm i + \frac 1 2 \bm e_k}^{n+1}.
\end{equation*}

We assume a fairly general geometric condition on $\Omega$
\begin{equation}
    \tag{H0}
    \label{hyp:geometric assumption}
    \inf_{\omega \Subset \Omega} |\Omega \setminus \omega| = 0
\end{equation}
For this to hold it suffices that $\partial \Omega$ is piece-wise Lipschitz.
Given a solution to \eqref{eq:scheme} with $\tau = \tau_\seqi$, $\bm h = \bm h_\seqi$, and $\bm I = \bm I_{\bm h_\seqi} (\Omega)$ we define the piece-wise constant interpolation
\begin{equation}
    \label{eq:rho seqi}
    \rho_\seqi (t,x) = \begin{dcases}
        \Rho_{\bm i}^{n+1} & \text{if } x \in Q^{(\bm h_\seqi)} (\bm i) \text{ for } \bm i \in \bm I_{\bm h_\seqi} (\Omega) \text{ and } t \in [n\tau_\seqi, (n+1)\tau_\seqi), \\
        0                  & \text{otherwise}.
    \end{dcases}
\end{equation}
Throughout the manuscript, even though $\Rho_{\bm i}^{n+1}$ depends on $\bm h$ and $\tau$. Even when we deal with $\bm h_\seqi$ and $\tau_\seqi$ we will not include in the name of $\Rho_{\bm i}^{n+1}$ the dependence on $\seqi$ for the sake of clarity.

\subsubsection*{Main results}
We prove two convergence results. The first result deals with problems where there are no free boundaries.
\begin{theorem}
    \label{thm:convergence for data away from 0 mob Lipschitz}
    Let $T > 0$ be fixed, $\tau_\seqi \to 0$, $\bm h_\seqi \to 0$,
    $\bm \Rho$ be a solution to \eqref{eq:scheme}, and $\rho_\seqi$ be given by \eqref{eq:rho seqi}.
    Assume \eqref{hyp:geometric assumption} and that
    \begin{enumerate}[label=\rm (H\arabic*),left=\parindent]
        \item \label{it:rho0 away from 0 and alpha}
              There exists constants $C_i$ such that $0 < C_1 \le \rho_0 \le C_2 < \alpha$.

        \item \label{it:mob decomposed with Lipschitz pieces}
              $\mob$ is of the form
              \eqref{eq:mobility decomposition} with $\mobup, \mobdown \in W^{1,\infty}(0,\alpha)$.

        \item \label{it:U C2 and convex}
              $U \in C^2((0,\alpha))$ with $U'' > 0$ in $(0,\alpha)$.

        \item \label{it:V and W compactly supported}
              $V \in W^{2,\infty} (\Omega)$, $K \in W^{2,\infty} (\Omega \times \Omega)$, and $K(x,y) = K(y,x)$ with $\nabla V = 0$ and $\nabla_x K = 0$ on $\partial \Omega$.

    \end{enumerate}
    Then,
    \begin{enumerate}[label=\rm (C\arabic*), left=\parindent]
        \item \label{it:compactness}
              There exists a subsequence of $\rho_{\seqi}$ that converges in $L^1 ((0,T) \times \Omega)$ to a limit $\rho$.
        \item \label{it:limit is solution}
              This limit, $\rho$, is a weak solution to \eqref{eq:main problem}, i.e.,
              \begin{align*}
                  \int_0^T \int_\Omega \left(-\rho\frac{\partial \varphi}{\partial t} + \mob(\rho) \nabla \xi \cdot \nabla \varphi  \right) = \int_\Omega \rho_0(x) \varphi(0,x) dx,
              \end{align*}
              for all $\varphi \in C^\infty([0,T) \times \overline \Omega)$, where $\xi$ is given by \eqref{eq:xi}.
        \item \label{it:upper lower bound}
              Furthermore, $\rho$ does not have free boundaries and satisfies the bounds
              \[
                  e^{- d \gamma t} \essinf_\Omega \rho_0 \le \rho_t
                  \le
                  (1 - e^{- d \gamma t}) \alpha + e^{- d \gamma t} \esssup_\Omega \rho_0
              \]
              where $\gamma = 2 \lambda \|\mob'\|_{L^\infty}$ and $\lambda = \|D^2 V\|_{L^\infty} + \|D_x^2 K\|_{L^\infty} \|\rho_0 \|_{L^1}$.
    \end{enumerate}
\end{theorem}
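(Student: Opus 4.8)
The plan is to follow the chain: discrete a priori bounds $\Rightarrow$ discrete $H^1$-in-space and $W^{-1,1}$-in-time estimates $\Rightarrow$ $L^1$-compactness via the Gallouët--Latché discrete Aubin--Simon theorem $\Rightarrow$ identification of the limit. The point I would stress from the outset is that \ref{it:upper lower bound} has to be obtained first at the discrete level: it pins the interpolants $\rho_\seqi$ to a fixed interval $[m_T,M_T]\Subset(0,\alpha)$ for $t\le T$, and only on such an interval are $U''\ge c_U>0$, $\mob\ge c_\theta>0$, and $\mobup,\mobdown$ bounded --- which is exactly what is needed to strip all degeneracy from the later estimates.

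First I would prove the discrete maximum principle. Let $m^{n+1}=\min_{\bm i\in\bm I}\Rho_{\bm i}^{n+1}$, attained at $\bm i_0$, and evaluate \eqref{eq:scheme equation} at $\bm i_0$, splitting $\xi_{\bm i}^{n+1}=U'(\Rho_{\bm i}^{n+1})+W_{\bm i}$ with $W_{\bm i}=V_{\bm i}+|Q^{(\bm h)}|\sum_{\bm j\in\bm I}K_{\bm i,\bm j}\Rho_{\bm j}^{n+\frac12}$. At a minimum the up-winding in \eqref{eq:scheme F,v,xi} makes $\mobdown(\Rho_{\bm i_0}^{n+1})$ a common factor of the two diffusive face fluxes in each direction, so that, using that $U'$ is increasing and $\mobup\ge0$, the net diffusive flux out of $\bm i_0$ is $\le0$; the drift contribution, after the same flux difference, is a discrete second difference of $W$ --- bounded by $\lambda=\|D^2V\|_{L^\infty}+\|D_x^2K\|_{L^\infty}\|\rho_0\|_{L^1}$, using mass conservation --- multiplied by a mobility coefficient that, because $\mob(0)=0$, is $O(\|\mob'\|_{L^\infty}\Rho_{\bm i_0}^{n+1})$. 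For the cells touching $\partial\Omega_{\bm h}$, where the outer flux is frozen to zero, I would instead use that $\nabla V=\nabla_xK=0$ on $\partial\Omega$ together with $V,K\in W^{2,\infty}$ to control the drift velocity at $\bm i_0$ and again extract $\mob(\Rho_{\bm i_0}^{n+1})\le\|\mob'\|_{L^\infty}\Rho_{\bm i_0}^{n+1}$. This produces the recursion $m^{n+1}(1+d\gamma\tau)\ge m^n$ with $\gamma=2\lambda\|\mob'\|_{L^\infty}$, hence $m^{n+1}\ge e^{-d\gamma t_{n+1}}m^0$, and $m^0\ge\essinf_\Omega\rho_0>0$ by \ref{it:rho0 away from 0 and alpha} (the natural initialisation inheriting the bounds of $\rho_0$); the symmetric computation at $M^{n+1}=\max_{\bm i\in\bm I}\Rho_{\bm i}^{n+1}$, using $\mobdown(\alpha)=0$, gives $\alpha-M^{n+1}\ge e^{-d\gamma t_{n+1}}(\alpha-M^0)$. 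In parallel I would test \eqref{eq:scheme equation} against $\xi_{\bm i}^{n+1}$, sum by parts, and use the convexity of $U$ together with the symmetry of $K$ and the average \eqref{eq:scheme P n + 1/2} to obtain the energy--dissipation inequality $\mathcal F_{\bm h}(\bm P^{n+1})-\mathcal F_{\bm h}(\bm P^n)\le-\tau|Q^{(\bm h)}|\sum_{k=1}^d\sum_{\bm i}\theta_{\bm i+\frac12\bm e_k}^{n+1}\,|v_{\bm i+\frac12\bm e_k}^{n+1}|^2$, where $\mathcal F_{\bm h}$ is the Riemann-sum discretisation of \eqref{eq:free energy}; since $\mathcal F_{\bm h}(\bm P^n)\ge-C$ (all values in $[m_T,M_T]$, $V,K$ bounded, mass conserved), summing over $n$ bounds $\sum_n\tau|Q^{(\bm h)}|\sum_{k,\bm i}\theta_{\bm i+\frac12\bm e_k}^{n+1}|v_{\bm i+\frac12\bm e_k}^{n+1}|^2$ uniformly.

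Next I would turn these into the compactness estimates. Since $\theta=\mobupwind(\cdot,\cdot)\ge c_\theta>0$ on $[m_T,M_T]$, the dissipation controls $\int_0^T\|v_\seqi(t)\|_{L^2(\Omega)}^2\,dt$; then from $v_{\bm i+\frac12\bm e_k}^{n+1}=-h_k^{-1}(\xi_{\bm i+\bm e_k}^{n+1}-\xi_{\bm i}^{n+1})$, the bound $|\xi_{\bm i+\bm e_k}^{n+1}-\xi_{\bm i}^{n+1}-(U'(\Rho_{\bm i+\bm e_k}^{n+1})-U'(\Rho_{\bm i}^{n+1}))|\le Ch_k$ (from $V,K\in W^{1,\infty}$ and mass conservation), and $|\Rho_{\bm i+\bm e_k}^{n+1}-\Rho_{\bm i}^{n+1}|\le c_U^{-1}|U'(\Rho_{\bm i+\bm e_k}^{n+1})-U'(\Rho_{\bm i}^{n+1})|$, I would extract a uniform bound on $\int_0^T$ of the discrete $H^1$-seminorm of $\rho_\seqi$ (and of $U'(\rho_\seqi)$). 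On the other hand \eqref{eq:scheme equation} writes the discrete time difference of $\rho_\seqi$ as $-\nabla_{\bm h}\!\cdot\bm F_\seqi$ with $|\bm F_\seqi|\le\|\theta\|_{L^\infty}|v_\seqi|$, so Cauchy--Schwarz against the dissipation bound makes $\bm F_\seqi$ bounded in $L^1((0,T)\times\Omega)$, hence the discrete time difference bounded in $L^1(0,T;W^{-1,1}_{\bm h})$. Using the uniform discrete Rellich embedding $H^1_{\bm h}\hookrightarrow\hookrightarrow L^1(\Omega)\hookrightarrow W^{-1,1}_{\bm h}$ --- where \eqref{hyp:geometric assumption} lets one exhaust $\Omega$ by sets $\omega\Subset\Omega$ --- the discrete Aubin--Simon theorem of \cite{Gallouet2012} yields a subsequence with $\rho_\seqi\to\rho$ in $L^1((0,T)\times\Omega)$, which is \ref{it:compactness}.

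Finally, for \ref{it:limit is solution} I would multiply \eqref{eq:scheme equation} by $\varphi(n\tau_\seqi,\bm x_{\bm i}^{(\bm h_\seqi)})|Q^{(\bm h_\seqi)}|$, sum over $\bm i$ and $n$, and integrate by parts discretely in space and by Abel summation in time. Since $\rho_\seqi$ stays in $[m_T,M_T]$, the $L^1$-convergence upgrades to convergence in every $L^p$, and the discrete $H^1$-bound gives $\|\rho_\seqi(\cdot+h_k\bm e_k)-\rho_\seqi\|_{L^2((0,T)\times\Omega)}\to0$, so $\mobupwind(\Rho_{\bm i}^{n+1},\Rho_{\bm i+\bm e_k}^{n+1})=\mobup(\Rho_{\bm i}^{n+1})\mobdown(\Rho_{\bm i+\bm e_k}^{n+1})\to\mob(\rho)$ strongly; moreover $v_\seqi\rightharpoonup-\nabla\xi$ weakly in $L^2$, the weak limit of $\nabla_{\bm h}U'(\rho_\seqi)$ being identified with $\nabla(U'(\rho))$ via the strong convergence $U'(\rho_\seqi)\to U'(\rho)$, while the discrete gradients of $V_\seqi$ and of the discrete convolution term converge strongly to $\nabla V$ and $\int_\Omega\nabla_xK(\cdot,y)\rho(y)\,dy$. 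The product of the strong limit $\mob(\rho)$ and the weak limit passes through the flux term, and the time and initial terms converge in the usual way, so the limit identity is exactly the weak formulation; passing to the limit in the bounds of the first step gives \ref{it:upper lower bound}. I expect the main obstacle to be the discrete maximum principle with the sharp constant $\gamma=2\lambda\|\mob'\|_{L^\infty}$: order is not obviously preserved by the nonlinear up-winding, and one must pull the factor $\mob(\Rho_{\bm i_0}^{n+1})\le\|\mob'\|_{L^\infty}\Rho_{\bm i_0}^{n+1}$ out of a discrete second difference of $V+\int_\Omega K(\cdot,y)\rho(y)\,dy$, with the boundary cells --- where part of the flux is forced to vanish --- tractable only thanks to $\nabla V=\nabla_xK=0$ on $\partial\Omega$; a secondary difficulty is choosing the discrete spaces $H^1_{\bm h}$ and $W^{-1,1}_{\bm h}$ and their uniform compact embedding so that \cite{Gallouet2012} applies verbatim, and identifying the weak $L^2$-limit of $v_\seqi$ with $-\nabla\xi$.
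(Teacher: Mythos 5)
Your proposal is correct and follows the same overall architecture as the paper's proof: discrete maximum principle first (to confine the densities to a compact subinterval of $(0,\alpha)$ and remove all degeneracy), free-energy dissipation obtained by testing with $\xi^{n+1}$, discrete $H^1$-in-space and $W^{-1,1}$-in-time estimates, compactness via the Gallouët--Latché discrete Aubin--Simon theorem, and identification of the limit by pairing the strong $L^2$ convergence of the up-winded mobilities (via translation estimates) with the weak $L^2$ convergence of the discrete velocities. The one point where you take a genuinely different route is the spatial estimate. You derive the discrete $H^1$ bound on $\rho_\seqi$ directly from the energy dissipation, using that on $[m_T,M_T]\Subset(0,\alpha)$ one has $\theta\ge \mobup(m_T)\mobdown(M_T)>0$, so the dissipation controls $\int_0^T\|v_\seqi\|_{L^2}^2$ and hence the discrete gradient of $U'(\rho_\seqi)$ and of $\rho_\seqi$. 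The paper instead introduces the auxiliary entropy $\Lambda_U$ with $\mob\,\Lambda_U''=U''$ and proves a second dissipation inequality (\Cref{prop:Sobolev-type estimate}, \Cref{cor:U'(rho) is H1x}) that bounds the discrete $H^1$ seminorm of $U'(\rho_\seqi)$ without ever invoking a positive lower bound on $\theta$. For \Cref{thm:convergence for data away from 0 mob Lipschitz} both routes work and yours is the more elementary; the paper's $\Lambda_U$ route is the one that survives in \Cref{thm:convergence with free boundary}, where free boundaries may form and $\theta$ degenerates. One cosmetic caution for the flux limit: the flux is $F=f\,(v)_++g\,(v)_-$ with two distinct up-winded coefficients $f,g$, so it is not literally a product of one strong limit with the weak limit of $v$; since $f$ and $g$ both converge strongly to $\mob(\rho)$ the conclusion is unchanged, but the remainder terms $(f-\mob(\rho))(v)_+$ and $(g-\mob(\rho))(v)_-$ should be estimated explicitly, as the paper does by passing to weak limits of $(v)_+$ and $(v)_-$ separately.
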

Recall that $W^{2,\infty}_0$ is the set of functions $f$ with bounded weak second derivative $D^2f$, such that $f$ and $\nabla f$ vanish on $\partial \Omega$.
In the previous theorem we must make very stringent assumptions on $\mob$, $V$, and $K$.
These come as a consequence of the generality of $U$, which might not be differentiable at $\rho = 0$ and $\rho = \alpha$ in unspecified ways (e.g., $\rho^m / (m-1)$ for $m \sim 0$).
Let us illustrate this fact by proving a result with more restrictive assumptions on $U$ but much weaker conditions on $\rho_0$, $\mob$, $V$, and $K$.
Given $U \in C^2((0,\alpha))$ we introduce the function $\Lambda_U$ by specifying
\begin{equation}
    \label{eq:Lambda_U}
    \Lambda_U''(s) = \frac{U''(s)}{\mob(s)} \text{ for all } s \in (0,\alpha), \qquad \Lambda_U(\tfrac \alpha 2) = \Lambda_U'(\tfrac \alpha 2) = 0.
\end{equation}
We know that $\Lambda \in C^2 ((0,\alpha))$, but we cannot make any claims of its regularity at $0$ or $\alpha$.
\begin{theorem}
    \label{thm:convergence with free boundary}
    Let $T > 0$ be fixed, $\tau_\seqi \to 0$, $\bm h_\seqi \to 0$,
    $\bm \Rho$ be a solution to \eqref{eq:scheme}, and $\rho_\seqi$ be given by \eqref{eq:rho seqi}.
    Assume that \eqref{hyp:geometric assumption} and
    \begin{enumerate}[label=\rm (H\arabic*'), left=\parindent]
        \item  \label{it:rho0 between 0 and alpha}
              $0 \le \rho_0 \le \alpha$.
        \item \label{it:mobility globally continuos and signs of m'}
              $\mob$ is of the form \eqref{eq:mobility decomposition} with $\mobup, \mobdown \in C ([0,\alpha])$.
        \item
              \label{it:U C1 [0,alpha] and U'' > C > 0}
              $U \in C^2((0,\alpha)) \cap C^1([0,\alpha])$ with $\inf_{s \in (0,\alpha)} U'' (s) > 0$, and $\Lambda_U \in C([0,\alpha])$.

        \item $V \in W^{1,\infty} (\Omega)$, $K \in W^{1,\infty} (\Omega \times \Omega)$, and $K(x,y) = K(y,x)$.

    \end{enumerate}
    Then, we have
    \ref{it:compactness} and \ref{it:limit is solution}.
\end{theorem}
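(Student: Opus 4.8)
The plan is to follow the same architecture as the proof of Theorem \ref{thm:convergence for data away from 0 mob Lipschitz}, namely: (i) establish uniform a priori estimates on the discrete solution (mass conservation, $0 \le \Rho_{\bm i}^{n+1} \le \alpha$, energy dissipation); (ii) extract from these a uniform bound on $\rho_\seqi$ in a discrete $L^2_t H^1_x$-type norm together with a uniform bound on a discrete time-derivative in a $W^{-1,1}$-type norm; (iii) apply the Gallouët--Latché version of the Aubin--Simon compactness theorem to get the $L^1((0,T)\times\Omega)$ convergence \ref{it:compactness}; and (iv) pass to the limit in the weak formulation to get \ref{it:limit is solution}. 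The point of the present theorem is that the strict, uniform convexity $\inf U'' > 0$ and the condition $\Lambda_U \in C([0,\alpha])$ replace the $L^\infty$-bounds on $\rho_0$ away from the two degenerate values, so that one can afford $\mobup,\mobdown$ merely continuous and $V,K$ merely Lipschitz.

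First I would record the discrete energy--dissipation identity: testing the scheme \eqref{eq:scheme} against $\bm\xi^{n+1}$ (using the symmetry $K_{\bm i,\bm j}=K_{\bm j,\bm i}$ and the choice \eqref{eq:scheme P n + 1/2} exactly as in \cite{BailoCarrilloHu2023}) yields
\begin{equation*}
    \mathcal F_{\bm h}(\bm\Rho^{n+1}) + \tau \sum_{n,k} \sum_{\bm i} |Q^{(\bm h)}|\, \theta_{\bm i+\frac12\bm e_k}^{n+1}\, \big(v_{\bm i+\frac12\bm e_k}^{n+1}\big)^2 \;\le\; \mathcal F_{\bm h}(\bm\Rho^0),
\end{equation*}
where $\mathcal F_{\bm h}$ is the natural discretization of \eqref{eq:free energy}. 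Since $V,K\in W^{1,\infty}$ and $0\le\Rho\le\alpha$, the linear and quadratic parts of $\mathcal F_{\bm h}$ are bounded, and $U\in C^1([0,\alpha])$ makes $\int_\Omega U(\rho)$ bounded too; hence the dissipation term is bounded uniformly. The key new step is to convert this into a uniform discrete $H^1$ bound. Writing $v_{\bm i+\frac12\bm e_k}^{n+1} = -h_k^{-1}(U'(\Rho_{\bm i+\bm e_k}^{n+1})-U'(\Rho_{\bm i}^{n+1})) + O(\|V\|_{W^{1,\infty}}+\|K\|_{W^{1,\infty}})$, and using that $\inf U''>0$ gives the pointwise inequality $\theta\, v^2 \gtrsim \theta\, h_k^{-2}(U'(\Rho_{\bm i+\bm e_k})-U'(\Rho_{\bm i}))^2 \gtrsim$ (a coercive quantity in the increments of the "diffused" variable). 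Here I would introduce the variable whose increments are controlled — following \eqref{eq:Lambda_U}, the function $\Lambda_U$ is chosen precisely so that $\Lambda_U'(\Rho_{\bm i+\bm e_k}) - \Lambda_U'(\Rho_{\bm i})$ is comparable to $\theta^{1/2}$ times $v$ on each edge (since $\Lambda_U'' = U''/\mob$ and $\theta$ is a convex combination of up-winded mobilities); the hypothesis $\Lambda_U\in C([0,\alpha])$ guarantees this new variable stays in a fixed compact set regardless of free boundaries. This produces a uniform bound $\|\rho_\seqi\|_{L^2_t \dot H^1_{\bm h}} \le C$ in a discrete $H^1$ norm of a nonlinear function of $\rho_\seqi$, and a standard argument (as in \cite{Gallouet2012,BailoCarrilloMurakawaSchmidtchen2020}) upgrades translation estimates on $\Lambda_U'(\rho_\seqi)$ to translation estimates on $\rho_\seqi$ itself, because $\Lambda_U'$ is a homeomorphism (strictly increasing, since $U''/\mob>0$) with uniformly continuous inverse on the compact range.

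Next, for the time compactness, I would bound the discrete time-derivative: from the scheme, $\tau^{-1}(\Rho^{n+1}-\Rho^n)$ equals a discrete divergence of $F^{n+1}=\theta^{n+1}v^{n+1}$, and by Cauchy--Schwarz $\sum_n \tau\,\|\mathrm{div}_{\bm h}F^{n+1}\|_{W^{-1,1}_{\bm h}} \lesssim \big(\sum_{n,k,\bm i}\tau|Q^{(\bm h)}|\theta v^2\big)^{1/2}\big(\sum_{n,k,\bm i}\tau|Q^{(\bm h)}|\theta\big)^{1/2}$, and the second factor is controlled because $0\le\theta\le \|\mobup\|_\infty\|\mobdown\|_\infty$. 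With the discrete $H^1$ bound on $\Lambda_U'(\rho_\seqi)$ and this $W^{-1,1}$ bound on $\partial_t\rho_\seqi$, the Gallouët--Latché theorem gives a subsequence with $\Lambda_U'(\rho_\seqi)\to w$ in $L^1$, hence (inverting the homeomorphism and using dominated convergence) $\rho_\seqi\to\rho$ in $L^1((0,T)\times\Omega)$, which is \ref{it:compactness}. Finally, for \ref{it:limit is solution}, I would fix $\varphi\in C^\infty([0,T)\times\overline\Omega)$, use it to test the scheme, perform discrete integration by parts, and pass to the limit term by term: the discrete time-derivative term gives $-\int\!\!\int\rho\,\partial_t\varphi - \int\rho_0\varphi(0,\cdot)$; the flux term is $\sum \tau|Q^{(\bm h)}|\,F_{\bm i+\frac12\bm e_k}^{n+1}\,\partial_{x_k}\varphi + o(1)$, and one rewrites $F = \mobupwind(\cdot,\cdot)v$; since $\rho_\seqi\to\rho$ in $L^1$ and a.e.\ along a further subsequence, continuity of $\mobup,\mobdown$ gives $\mobupwind(\Rho_{\bm i},\Rho_{\bm i+\bm e_k})\to\mob(\rho)$ a.e., while $v_{\bm i+\frac12\bm e_k}^{n+1}$ is identified weakly with $-\nabla\xi$ using the discrete $H^1$ bound, the consistency $\xi_{\bm i}^{n+1}=U'(\Rho_{\bm i}^{n+1})+V_{\bm i}+(\text{quadrature of }K\rho)$, and the convergence of the convolution term (Lipschitz $K$, $L^1$ convergence of $\rho_\seqi$, and $\bm P^{n+1/2}-\bm P^{n+1}\to0$ in a suitable sense).

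\textbf{Main obstacle.} The delicate point is the limit passage in the flux term, because the mobility is only continuous (not Lipschitz) and $v_\seqi$ converges only weakly. I expect to need an argument in the spirit of \cite{BailoCarrilloMurakawaSchmidtchen2020}: one does not take limits in $F=\mob(\rho)v$ directly, but rather writes $F$ in terms of a flux of $\Lambda_U$-type quantities whose discrete gradient converges weakly while $\rho_\seqi$ converges strongly, and uses the a.e.\ convergence of $\rho_\seqi$ together with the uniform $L^\infty$ bound $0\le\rho_\seqi\le\alpha$ and dominated convergence to handle the nonlinear mobility factor, while controlling the up-winding discrepancy $\mobupwind(\Rho_{\bm i},\Rho_{\bm i+\bm e_k})-\mob(\rho)$ by $|\Rho_{\bm i+\bm e_k}-\Rho_{\bm i}|$-type terms that vanish in $L^1$ thanks to the discrete $H^1$ estimate and the (uniform) modulus of continuity of the inverse of $\Lambda_U'$. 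A secondary technical nuisance is the discrete-to-continuous geometry near $\partial\Omega$: hypothesis \eqref{hyp:geometric assumption} is exactly what is needed to ensure $|\Omega\setminus\Omega_{\bm h}|\to0$ and that boundary layers do not carry mass in the limit, so that the no-flux condition is recovered without any regularity of $\partial\Omega$ beyond \eqref{hyp:geometric assumption}.
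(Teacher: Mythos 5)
Your overall architecture is the paper's: the proof of this theorem is a modification of the proof of \Cref{thm:convergence for data away from 0 mob Lipschitz}, with the entropy $\Lambda_U$ (via \Cref{prop:Sobolev-type estimate} and \Cref{cor:U'(rho) is H1x}) supplying the spatial estimate, \Cref{lem:time continuity} supplying the $W^{-1,1}_{\bm h}$ time estimate, \Cref{thm:gallouet,lem:gallouet in our setting} giving compactness, and a strong-times-weak argument in the flux term. You also correctly identify the roles of the two new hypotheses: $\Lambda_U\in C([0,\alpha])$ keeps the right-hand side of the entropy inequality bounded without bounds on $\rho$ away from $0$ and $\alpha$, and $U'\in C([0,\alpha])$ is what lets you pass to the limit in $\xi$.

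There is, however, a genuine gap in the middle of your plan: you claim a discrete $H^1$ bound on $\Lambda_U'(\rho_\seqi)$ and then feed $\Lambda_U'(\rho_\seqi)$ into the Gallou\"et--Latch\'e theorem. Neither half of this works as stated. First, the entropy estimate does \emph{not} control $\sum_{\bm i,k}h_k^{-2}|\Lambda_U'(\Rho_{\bm i+\bm e_k}^{n+1})-\Lambda_U'(\Rho_{\bm i}^{n+1})|^2$: since $\Lambda_U''=U''/\mob$ blows up where $\mob$ vanishes, increments of $\Lambda_U'$ can be far larger than increments of $U'$ near the free boundary, and the dissipation $\sum\theta|v|^2$ only yields (after \Cref{prop:Sobolev-type estimate} and completing the square against the $V,K$ parts of $v$) an \emph{unweighted} $L^2$ bound on the discrete gradient of $U'(\rho_\seqi)$ — this is exactly the content of \Cref{cor:U'(rho) is H1x}. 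The hypothesis $\inf U''>0$, which you list but then bypass, is what converts this into a discrete $H^1$ bound on $\rho_\seqi$ \emph{itself} via $|\Rho_{\bm i+\bm e_k}-\Rho_{\bm i}|\le (\inf U'')^{-1}|U'(\Rho_{\bm i+\bm e_k})-U'(\Rho_{\bm i})|$. Second, \Cref{thm:gallouet} requires the $X_\seqi$ bound and the $Y_\seqi$ bound to hold for the \emph{same} sequence $u_\seqi$; your $W^{-1,1}_{\bm h}$ bound is on $\tau^{-1}(\bm\Rho^{n+1}-\bm\Rho^n)$, not on the time increments of $\Lambda_U'(\rho_\seqi)$, so the theorem cannot be applied to $\Lambda_U'(\rho_\seqi)$. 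Both issues disappear if you work with $\rho_\seqi$ directly, as the paper does. Finally, the ``main obstacle'' you flag is handled by the standard argument already in Step~7 of the proof of \Cref{thm:convergence for data away from 0 mob Lipschitz}: the strong $L^2$ convergence of $\rho_\seqi$ plus a Fr\'echet--Kolmogorov modulus gives strong convergence of the up-winded mobilities $\mobupwind(\rho_\seqi,\rho_\seqi(\cdot+h_k\bm e_k))\to\mob(\rho)$ using only continuity of $\mobup,\mobdown$ and the uniform bound $0\le\rho_\seqi\le\alpha$, after which the product with the weakly convergent $(v_{\seqi,k})_\pm$ passes to the limit weakly in $L^1$; no rewriting of $F$ in terms of $\Lambda_U$ is needed.
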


\begin{remark}[Existence and uniqueness]
    The existence of solutions for the scheme \eqref{eq:scheme} under the hypothesis of \Cref{thm:convergence for data away from 0 mob Lipschitz,thm:convergence with free boundary}
    follows by topological degree arguments as in \cite{CarrilloFernandez-JimenezGomez-Castro}, where it was shown for $K = 0$.
    In \cite{CarrilloFernandez-JimenezGomez-Castro} the authors prove uniqueness of numerical solutions when $K = 0$ by a comparison principle. When $K \ne 0$, comparison does not hold in general.
    For linear mobility $\mob(\rho) = \rho$ and $V, K \in W_c^{2,\infty}$ uniqueness of solutions was proved in \cite{carrilloPartialMassConcentration2024} by elaborate fixed-point arguments. We expect a similar result will hold in the numerical setting, but it escapes the aims of this manuscript.
\end{remark}

\begin{remark}[Examples]
    Consider $U(\rho) = \frac{\rho^m}{m-1}$. Then \ref{it:U C2 and convex} hold for $m \in (0,\infty)$, whereas \ref{it:U C1 [0,alpha] and U'' > C > 0} holds for $m \in (1,2]$.
\end{remark}

\subsubsection*{Structure of the paper}
In \Cref{sec:Aubin-Lions} we recall an Aubin--Simons compactness result for discrete settings introduce in \cite{Gallouet2012}, and we construct two discrete space norms that satisfy its hypothesis. We devote \Cref{sec:a priori estimates} to proving \emph{a priori} estimates in these norms that are suitable for the Aubin--Simons theorem.
In \Cref{sec:proof of convergence} we use this estimates to prove \Cref{thm:convergence for data away from 0 mob Lipschitz,thm:convergence with free boundary}.
We provide some numerical examples in \Cref{sec:numerical examples}.

\section{A discrete Aubin--Simons lemma and suitable norms}
\label{sec:Aubin-Lions}
We will apply the following Aubin--Simons compactness lemma due to Gallouët and Latché.

\begin{lemma}[{\protect\cite[Theorem 3.4]{Gallouet2012}}]
    \label{thm:gallouet}
    Let $(B, \|\cdot \|_B)$ be a Banach space,
    $B_\seqi$ a sequence of finite-dimensional subspaces of $B$,
    and $\| \cdot \|_{X_\seqi}, \| \cdot \|_{Y_\seqi}: B_\seqi \to \mathbb R$ be norms of $B_\seqi$ such that:
    \begin{enumerate}[label=\rm (GL\arabic*), left=\parindent]
        \item
              \label{it:gallouet H1}
              If we have a sequence $u_\seqi \in B_\seqi$ such that $\|u_\seqi\|_{X_\seqi}$ are bounded, then $u_\seqi$ has a subsequence that converges strongly in $B$.
        \item
              \label{it:gallouet H2}
              If $u_\seqi \in B_\seqi$ for each $\seqi$, $u_\seqi \to u$ strongly in $B$, and $\|u_\seqi\|_{Y_\seqi} \to 0$, then $u = 0$.
    \end{enumerate}
    Let $H_\seqi \subset L^\infty(0,T; B_\seqi)$ be the space of functions which are constant in time for $t \in (n \tau_\seqi, (n+1)\tau_\seqi)$.
    By notation, for $u \in H_\seqi$ we define $u^n = u((n+\frac 1 2)\tau_\seqi)$.
    Fix $T > 0$
    and assume that we have a sequence $u_\seqi \in H_\seqi$ such that, for some $q \in [1,\infty)$ and all $\seqi \in \mathbb N$
    \begin{equation*}
        \tau_n \sum_{n=0}^{ \lfloor T / \tau_n \rfloor + 1  } \| u_\seqi^n \|_{X_\seqi}^q + \tau_n \sum_{n=0}^{ \lfloor T / \tau_n \rfloor  } \left\|  \frac{ u_\seqi^{n+1} - u_\seqi^n}{\tau_\seqi}  \right\|^q_{Y_\seqi} \le C.
    \end{equation*}
    Then $u_\seqi$ has a subsequence that converges in $L^q(0,T; B)$.
\end{lemma}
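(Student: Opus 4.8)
The plan is to prove \Cref{thm:gallouet} by reducing it to a Riesz--Fréchet--Kolmogorov (Simon-type) characterization of relatively compact families in $L^q(0,T;B)$: a bounded family $\mathcal F \subset L^q(0,T;B)$ is relatively compact as soon as (i) for all $0 \le t_1 < t_2 \le T$ the set $\{\, \int_{t_1}^{t_2} f(t)\,dt : f \in \mathcal F \,\}$ is relatively compact in $B$, and (ii) $\sup_{f \in \mathcal F} \| f(\cdot + s) - f \|_{L^q(0,T-s;B)} \to 0$ as $s \to 0^+$. The obstruction, relative to the classical Aubin--Lions argument, is that the norms $\|\cdot\|_{X_\seqi}$, $\|\cdot\|_{Y_\seqi}$ depend on $\seqi$, so there is no fixed triple of spaces in which to interpolate; the remedy is to distill from \ref{it:gallouet H1}--\ref{it:gallouet H2} two inequalities that hold \emph{uniformly} in $\seqi$, after which a standard bookkeeping yields (i) and (ii).

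First I would record that \ref{it:gallouet H1} forces a uniform embedding bound: there is a constant $C_{XB}$ with $\|v\|_B \le C_{XB}\,\|v\|_{X_\seqi}$ for all $\seqi$ and all $v \in B_\seqi$; otherwise one finds $v_\seqi \in B_\seqi$ with $\|v_\seqi\|_{X_\seqi}\le 1$ and $\|v_\seqi\|_B \to \infty$, contradicting the relative compactness in \ref{it:gallouet H1}. Since $\int_{t_1}^{t_2} u_\seqi = \sum_n c_n\, u_\seqi^n$ with $0 \le c_n \le \tau_\seqi$ and $\sum_n c_n = t_2 - t_1$, Hölder's inequality together with the hypothesis $\tau_\seqi\sum_n \|u_\seqi^n\|_{X_\seqi}^q \le C$ gives $\big\| \int_{t_1}^{t_2} u_\seqi \big\|_{X_\seqi} \le (t_2-t_1)^{1-1/q}\,C^{1/q}$, uniformly in $\seqi$; by \ref{it:gallouet H1} this yields (i). The same computation also shows $\|u_\seqi\|_{L^q(0,T;B)} \le C_{XB}\,C^{1/q}$, so the family is bounded in $L^q(0,T;B)$.

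The crux is then an Ehrling-type inequality that is uniform in $\seqi$: for every $\eta > 0$ there is $C_\eta$ with $\|v\|_B \le \eta\,\|v\|_{X_\seqi} + C_\eta\,\|v\|_{Y_\seqi}$ for all $\seqi$ and all $v \in B_\seqi$. I would prove this by contradiction. If it failed for some $\eta_0$, then, using that on the finite-dimensional space $B_\seqi$ all norms are comparable, one could choose indices $\seqi_j$ and vectors $v_j \in B_{\seqi_j}$ with $\|v_j\|_B = 1$, $\|v_j\|_{X_{\seqi_j}} \le \eta_0^{-1}$ and $\|v_j\|_{Y_{\seqi_j}} \le 1/j$ (padding with $0$ to a genuine sequence indexed by all of $\mathbb N$ if needed). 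By \ref{it:gallouet H1} a subsequence converges strongly in $B$ to some $v$ with $\|v\|_B = 1$; but $\|v_j\|_{Y_{\seqi_j}} \to 0$, so \ref{it:gallouet H2} forces $v = 0$, a contradiction.

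It remains to prove (ii). For $s > 0$ and $t \in (0,T-s)$ the difference $u_\seqi(t+s) - u_\seqi(t)$ is a telescoping sum of at most $\lceil s/\tau_\seqi\rceil + 1$ consecutive increments $u_\seqi^{n+1} - u_\seqi^n = \tau_\seqi\,\delta_n^\seqi$. Bounding its $Y_\seqi$-norm by the triangle inequality, applying Hölder in that (short) sum, integrating over $t$ (each increment $\delta_n^\seqi$ being active only on a set of times of measure at most $C_2(s + \tau_\seqi)$), and invoking $\tau_\seqi\sum_n\|\delta_n^\seqi\|_{Y_\seqi}^q \le C$ leads to $\| u_\seqi(\cdot+s) - u_\seqi \|_{L^q(0,T-s;Y_\seqi)} \le C_1\,(s + \tau_\seqi)$. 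On the $X_\seqi$ side, the triangle inequality in time plus the bound from the second step give $\| u_\seqi(\cdot+s) - u_\seqi \|_{L^q(0,T-s;X_\seqi)} \le 2\,C^{1/q}$, uniformly in $\seqi$ and $s$. Feeding these into the Ehrling inequality (applied at each fixed $t$) yields
\[
    \| u_\seqi(\cdot+s) - u_\seqi \|_{L^q(0,T-s;B)} \le 2\eta\,C^{1/q} + C_\eta\,C_1\,(s + \tau_\seqi).
\]
Given $\varepsilon > 0$, choose $\eta$ with $2\eta\,C^{1/q} < \varepsilon/2$, which fixes $C_\eta$; then, recalling $\tau_\seqi \to 0$, pick $N$ and $\delta > 0$ so that $C_\eta\,C_1\,(s+\tau_\seqi) < \varepsilon/2$ whenever $\seqi \ge N$ and $0 < s < \delta$; the finitely many remaining functions $u_\seqi$ with $\seqi < N$ are individually piecewise constant in time, so their $L^q(0,T-s;B)$-translates tend to $0$ as $s \to 0$ and $\delta$ may be shrunk further to cover them. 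This establishes (ii), and the Simon criterion then extracts a subsequence of $u_\seqi$ convergent in $L^q(0,T;B)$. I expect the genuinely delicate point to be the uniform Ehrling inequality --- where the $\seqi$-dependence of the norms is overcome by combining the compactness \ref{it:gallouet H1} with the separation property \ref{it:gallouet H2} --- together with the final decoupling of the parameters $\eta$, $s$ and $\tau_\seqi$.
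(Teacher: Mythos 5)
This lemma is quoted from \cite[Theorem 3.4]{Gallouet2012} and the paper offers no proof of its own, so there is nothing internal to compare against; your argument is a correct reconstruction of the proof in the cited reference, which follows exactly the same two-step strategy (a uniform Ehrling-type inequality obtained by contradiction from \ref{it:gallouet H1}--\ref{it:gallouet H2}, fed into Simon's translation criterion for compactness in $L^q(0,T;B)$). The only point worth polishing is the ``padding with zeros'' device: as stated, \ref{it:gallouet H1} only yields \emph{one} convergent subsequence of the padded sequence, which could in principle live entirely in the zero part; to extract a convergent subsequence of the bad vectors $v_j\in B_{\seqi_j}$ one should read \ref{it:gallouet H1} as asserting relative compactness (as Gallouët--Latché do), or apply it to the sequence that equals $v_j$ on $\seqi=\seqi_j$ and note that for each fixed $\seqi$ norm-equivalence on the finite-dimensional $B_\seqi$ already rules out blow-up, so the offending indices must be distinct and tend to infinity.
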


In our setting, we take $X_\seqi$ and approximation of the $L^\infty \cap H^1$ norm, and $Y_\seqi$ and approximation of
We recall the following equivalent definition of the $W^{-1,1} (\Omega) \subset \mathcal D'(\Omega)$. We recall that
\begin{equation*}
    \| \Rho \|_{W^{-1,1}(\Omega)} = \inf \left\{ \int_\Omega \sum_{k=1}^d |F_k| : \bm F = (F_k) \text{ s.t. } \Rho = \nabla \cdot \bm F \right\}.
\end{equation*}
For $\bm I \subset \mathbb Z^d$ finite let us introduce $\mathbb R^{\bm I}$ as the set of vector $(P_{\bm i})_{\bm i \in \bm I}$. It has dimension $\# \bm I$.
We recall the discrete version (see, e.g., \cite{CarrilloFernandez-JimenezGomez-Castro}), $W^{-1,1}_{\bm h}$, given by
\begin{lemma}
    Let $\bm I \subset \mathbb Z^d$ be finite. For $\bm P \in \mathbb R^{\bm I}$ let us define
    \begin{equation}
        \label{eq:W-1,1 defn}
        \| \bm \Rho \|_{W^{-1,1}_{\bm h}(\bm I)}
        \defeq
        \inf \left\{ |Q^{(\bm h)}| \sum_{k=1}^d \sum_{\bm i \in \mathbb Z^d} |F_{\bm i + \frac 1 2 \bm e_k}|
        :
        \bm F \text{ s.t. }\forall \bm i \in \bm I, \Rho_{\bm i} = \sum_{k=1}^d \frac{F_{\bm i + \frac 1 2 \bm e_k}-F_{\bm i - \frac 1 2 \bm e_k}}{h_k} \right\}.
    \end{equation}
    This is a well-defined norm on $\mathbb R^{\bm I}$.
\end{lemma}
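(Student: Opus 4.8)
The statement asserts that \eqref{eq:W-1,1 defn} defines a norm on the finite–dimensional space $\mathbb R^{\bm I}$, and the plan is to verify the four defining properties: (i) finiteness, so that \eqref{eq:W-1,1 defn} is a well-defined map $\mathbb R^{\bm I}\to[0,\infty)$; (ii) absolute homogeneity; (iii) subadditivity; and (iv) positive definiteness. Properties (ii) and (iii) follow at once from the linearity of the discrete divergence constraint together with the homogeneity and subadditivity of the $\ell^1$-type objective, so the only points requiring an actual argument — and neither is deep — are the non-emptiness of the admissible set of fluxes (needed for finiteness) and the quantitative lower bound (needed for definiteness). If there is a "main step", it is the explicit construction of one admissible flux; everything else is bookkeeping.

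For finiteness, given $\bm\Rho\in\mathbb R^{\bm I}$ I would build an admissible $\bm F$ by slicing along the $\bm e_1$-direction and leaving all other components zero: for each fixed $(i_2,\dots,i_d)$ let $C\defeq\{i_1\in\mathbb Z:(i_1,i_2,\dots,i_d)\in\bm I\}$; if $C=\emptyset$ set $F_{(i_1+\frac12,i_2,\dots,i_d)}=0$ for all $i_1$, and otherwise, writing $a=\min C$ and $b=\max C$, set it to $0$ for $i_1<a$ and for $i_1>b$, and to $h_1\sum_{\substack{a\le j_1\le i_1\\(j_1,i_2,\dots,i_d)\in\bm I}}\Rho_{(j_1,i_2,\dots,i_d)}$ for $a\le i_1\le b$. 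A telescoping computation shows that $\Rho_{\bm i}=\sum_{k=1}^d (F_{\bm i+\frac12\bm e_k}-F_{\bm i-\frac12\bm e_k})/h_k$ holds at every $\bm i\in\bm I$; the net mass of each column is flushed out across the face $\{i_1=b+\tfrac12\}$, which is harmless since $b+1\notin C$ and the constraint is imposed only on $\bm I$. As $\bm I$ is finite, $\bm F$ has finitely many nonzero entries, so its objective is finite; since the objective is a sum of absolute values times the positive number $|Q^{(\bm h)}|$, the infimum lies in $[0,\infty)$.

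For (ii): when $\lambda\ne0$ the map $\bm F\mapsto\lambda\bm F$ is a bijection from the admissible set for $\bm\Rho$ onto that for $\lambda\bm\Rho$ scaling the objective by $|\lambda|$, giving $\|\lambda\bm\Rho\|_{W^{-1,1}_{\bm h}(\bm I)}=|\lambda|\,\|\bm\Rho\|_{W^{-1,1}_{\bm h}(\bm I)}$, and $\lambda=0$ is handled by the zero flux. For (iii): if $\bm F,\bm G$ are admissible for $\bm\Rho,\bm\Rho'$, then $\bm F+\bm G$ is admissible for $\bm\Rho+\bm\Rho'$ with objective at most the sum of the two; choosing $\bm F,\bm G$ within $\varepsilon$ of the respective infima and letting $\varepsilon\to0$ yields $\|\bm\Rho+\bm\Rho'\|_{W^{-1,1}_{\bm h}(\bm I)}\le\|\bm\Rho\|_{W^{-1,1}_{\bm h}(\bm I)}+\|\bm\Rho'\|_{W^{-1,1}_{\bm h}(\bm I)}$.

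For definiteness (iv), set $h_{\min}\defeq\min_{1\le k\le d}h_k>0$. From the constraint, for any admissible $\bm F$ and any $\bm i\in\bm I$,
\begin{align*}
    |\Rho_{\bm i}|
    &\le\sum_{k=1}^d\frac{|F_{\bm i+\frac12\bm e_k}|+|F_{\bm i-\frac12\bm e_k}|}{h_k}
    \le\frac{1}{h_{\min}}\sum_{k=1}^d\sum_{\bm j\in\mathbb Z^d}|F_{\bm j+\frac12\bm e_k}| \\
    &=\frac{1}{h_{\min}\,|Q^{(\bm h)}|}\left(|Q^{(\bm h)}|\sum_{k=1}^d\sum_{\bm j\in\mathbb Z^d}|F_{\bm j+\frac12\bm e_k}|\right),
\end{align*}
and taking the infimum over admissible $\bm F$ gives $|\Rho_{\bm i}|\le(h_{\min}\,|Q^{(\bm h)}|)^{-1}\,\|\bm\Rho\|_{W^{-1,1}_{\bm h}(\bm I)}$ for every $\bm i\in\bm I$; hence $\|\bm\Rho\|_{W^{-1,1}_{\bm h}(\bm I)}=0$ forces $\bm\Rho=0$, completing the verification. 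One may optionally note that the infimum is attained: restricting to fluxes supported on the fixed finite set of faces incident to $\bm I$ does not change the value of \eqref{eq:W-1,1 defn} (it affects neither the divergence at any $\bm i\in\bm I$ nor increases the objective) and reduces it to minimising a coercive continuous function over a non-empty affine subspace of a finite-dimensional space.
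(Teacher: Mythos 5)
Your proof is correct, and it differs from the paper's in the two places where something actually has to be checked. For non-emptiness, the paper solves a discrete Dirichlet problem $\sum_k (2U_{\bm i}-U_{\bm i+\bm e_k}-U_{\bm i-\bm e_k})/h_k=\Rho_{\bm i}$ on $\bm I$ with $U=0$ outside, and takes $F_{\bm i+\frac12\bm e_k}=-(U_{\bm i+\bm e_k}-U_{\bm i})/h_k$; you instead build the flux explicitly by cumulative sums along the $\bm e_1$-columns. Both are valid: the paper's route leans on the invertibility of the discrete Laplacian with exterior Dirichlet data (asserted as "clear"), while yours is fully constructive and requires only the telescoping identity you verify, at the price of a short case analysis on the column structure of $\bm I$. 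For definiteness, the paper argues tersely that vanishing norm forces $\bm F=0$ and hence $\bm\Rho=0$ — an argument that implicitly uses attainment of the infimum — whereas your quantitative bound $|\Rho_{\bm i}|\le (h_{\min}|Q^{(\bm h)}|)^{-1}\|\bm\Rho\|_{W^{-1,1}_{\bm h}(\bm I)}$ sidesteps attainment entirely and is, if anything, the more robust argument (it also shows the norm dominates a multiple of the $\ell^\infty$ norm, which is stronger than what the lemma asks). Your optional attainment remark is also sound. No gaps.
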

\begin{proof}
    To see that this a well-defined norm, let us show that we are taking $\inf$ over a non-empty set.
    It is clear that we can solve the system
    \begin{equation*}
        \begin{dcases}
            \sum_{k=1}^d \frac{2U_{\bm i} - U_{\bm i +\bm e_k} - U_{\bm i - \bm e_k}}{h_k} = \Rho_{\bm i} & \forall \bm i \in \bm I,                       \\
            U_{\bm i} = 0                                                                                 & \forall \bm i \in \mathbb Z^d \setminus \bm I.
        \end{dcases}
    \end{equation*}
    Then, we can let $F_{\bm i + \frac 1 2 \bm e_k} = - \frac{U_{\bm i + \bm e_k} - U_{\bm i}}{h_k}$.
    The triangular inequality is trivial. Lastly, if $\| \bm \Rho \|_{W^{-1,1}_{\bm h}(\bm I)}$ $= 0$ then $F_{\bm i + \frac 1 2 \bm e_k} = 0$ for all $\bm i \in \mathbb Z^d$ and $k \in \{1, \cdots, d\}$, and hence $\bm P = 0$.
    This completes the proof.
\end{proof}
Similarly, we introduce the semi-norm for $\bm P \in \mathbb R^{\bm I}$
\begin{equation*}
    [\bm P ]_{H^1_{\bm h} (\bm I)}^2 \defeq |Q^{(\bm h)}| \sum_{k=1}^d
    \sum_{\substack{\bm i \in \bm I \text{ s.t.}  \\ \bm i + \bm e_k \in \bm I}}
    \left| \frac{\Rho_{\bm i + \bm e_k} - \Rho_{\bm i }} {h_k}\right|^2 .
\end{equation*}
For convenience, we will denote
\begin{equation*}
    W^{-1,1}_{\bm h} (\Omega) \defeq W^{-1,1}_{\bm h} (\bm I_h (\Omega)), \qquad H^{1}_{\bm h} (\Omega) \defeq H^1_{\bm h} (\bm I_{\bm h} (\Omega)).
\end{equation*}
Now we can apply \Cref{thm:gallouet} to our setting.
\begin{lemma}
    \label{lem:gallouet in our setting}
    Assume \eqref{hyp:geometric assumption} and let $B \defeq L^2(\Omega)$.
    Define the sets of piece-wise constant functions over the mesh
    \begin{equation*}
        B_\seqi \defeq \{
        \rho \in L^\infty (\Omega)
        :
        \rho \text{ is constant in each }
        {Q^{(\bm h)}_{\bm i}} \cap \Omega
        \text{ for all } \bm i
        \}.
    \end{equation*}
    For $\rho_\seqi \in B_\seqi$, we denote $\bm \Rho^{(\seqi)} \defeq (\rho_\seqi (x_{\bm i}^{(\bm h_\seqi)}))_{\bm i \in \bm I_{\bm h} (\Omega)}$ and introduce the norms
    \begin{align*}
        \| \rho_\seqi \|_{X_\seqi}
         & \defeq
        \| \rho_\seqi \|_{L^\infty}
        + [\bm \Rho^{(\seqi)}]_{H^1_{\bm h} (\Omega)},
        \qquad \qquad
        \| \rho_\seqi \|_{Y_\seqi}
        \defeq \| \bm \Rho^{(\seqi)} \|_{W^{-1,1}_{\bm h_\seqi} (\Omega)}.
    \end{align*}
    These norms satisfy the assumptions of \ref{it:gallouet H1} and \ref{it:gallouet H2}.
\end{lemma}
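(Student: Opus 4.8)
The claimed properties are purely functional-analytic, so none of the structural hypotheses on $U$, $\mob$, $V$, $K$ enter; the plan is to check \ref{it:gallouet H1} and \ref{it:gallouet H2} directly, in each case reducing the argument to the interior of $\Omega$, where the discrete seminorms $[\,\cdot\,]_{H^1_{\bm h}}$ and $\|\cdot\|_{W^{-1,1}_{\bm h}}$ retain their meaning, and handling the boundary layer via \eqref{hyp:geometric assumption}. Two elementary observations will be used repeatedly. First, for any $\omega \Subset \Omega$ and $\bm h$ small enough, every cell $Q^{(\bm h)}_{\bm i}$ meeting $\omega$ --- together with all of its lattice neighbours within a fixed distance --- satisfies $\overline{Q^{(\bm h)}_{\bm i}} \subset \Omega$, so on such $\omega$ the function $\rho_\seqi$ coincides with the piecewise constant function determined by $\bm \Rho^{(\seqi)}$. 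Second, \eqref{hyp:geometric assumption} furnishes, for each $\varepsilon > 0$, some $\omega \Subset \Omega$ with $|\Omega \setminus \omega| < \varepsilon$, and this choice is independent of $\seqi$.

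\emph{Checking \ref{it:gallouet H1}.} Suppose $\|\rho_\seqi\|_{X_\seqi} \le C$. I will apply the Fréchet--Kolmogorov compactness criterion in $L^2(\Omega)$. Boundedness in $L^2(\Omega)$ is immediate from $\|\rho_\seqi\|_{L^\infty} \le C$ and $|\Omega| < \infty$; tightness near $\partial\Omega$ follows from $\|\rho_\seqi\|_{L^2(\Omega \setminus \omega)}^2 \le C^2 |\Omega \setminus \omega|$, made uniformly small by \eqref{hyp:geometric assumption}. For the interior translation estimate, fix $\omega \Subset \Omega$, take $|z| < \tfrac12\operatorname{dist}(\omega, \partial\Omega)$, and for $x$ in an interior cell express $\rho_\seqi(x+z) - \rho_\seqi(x)$ as a telescoping sum of nearest-neighbour increments $\Rho^{(\seqi)}_{\bm j + \bm e_k} - \Rho^{(\seqi)}_{\bm j}$ along a lattice path of length $O(|z|/h_k + 1)$ in coordinate $k$; Cauchy--Schwarz on each increment and a rearrangement of the resulting double sum (each mesh edge being used $O(|z|/h_k + 1)$ times) give
\[
    \|\rho_\seqi(\cdot + z) - \rho_\seqi\|_{L^2(\omega)} \;\le\; \big( |z| + C'\operatorname{diam} Q^{(\bm h_\seqi)} \big)\,[\bm \Rho^{(\seqi)}]_{H^1_{\bm h_\seqi}(\Omega)} \;\le\; C\big( |z| + C'\operatorname{diam} Q^{(\bm h_\seqi)} \big).
\]
Since the mesh-size term is small only along the sequence, I close the criterion by splitting: for $\seqi$ large the right-hand side is $\le C|z| + \varepsilon$, while for the finitely many remaining $\seqi$ each $\rho_\seqi$ is a fixed $L^2$ function and hence uniformly $L^2$-continuous under translations of size below some $\delta > 0$. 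The Fréchet--Kolmogorov hypotheses therefore hold and a subsequence of $\rho_\seqi$ converges strongly in $B = L^2(\Omega)$.

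\emph{Checking \ref{it:gallouet H2}.} Assume $\rho_\seqi \to u$ in $B$ and $\|\rho_\seqi\|_{Y_\seqi} = \|\bm \Rho^{(\seqi)}\|_{W^{-1,1}_{\bm h_\seqi}(\Omega)} \to 0$, and let $\varphi \in C_c^\infty(\Omega)$. For $\bm h_\seqi$ small we have $\operatorname{supp}\varphi \subset \Omega_{\bm h_\seqi}$, hence $\int_\Omega \rho_\seqi \varphi = \sum_{\bm i \in \bm I_{\bm h_\seqi}(\Omega)} \Rho^{(\seqi)}_{\bm i} \int_{Q^{(\bm h)}_{\bm i}} \varphi$, and replacing $\int_{Q^{(\bm h)}_{\bm i}} \varphi$ by $|Q^{(\bm h)}|\,\varphi(\bm x^{(\bm h)}_{\bm i})$ costs at most $\|\nabla\varphi\|_{L^\infty}\operatorname{diam} Q^{(\bm h_\seqi)}\|\rho_\seqi\|_{L^1(\Omega)}$, which vanishes since $\|\rho_\seqi\|_{L^1(\Omega)}$ is bounded (the sequence converges in $L^2$). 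Next, pick a flux $\bm F$ near-optimal in the definition \eqref{eq:W-1,1 defn} of $\|\bm \Rho^{(\seqi)}\|_{W^{-1,1}_{\bm h_\seqi}(\Omega)}$ and sum by parts: as $\varphi$ is compactly supported the boundary terms vanish for $\bm h_\seqi$ small, and $|\varphi(\bm x^{(\bm h)}_{\bm i + \bm e_k}) - \varphi(\bm x^{(\bm h)}_{\bm i})| \le h_k \|\nabla\varphi\|_{L^\infty}$ yields
\[
    \Big| |Q^{(\bm h)}| \sum_{\bm i \in \bm I_{\bm h_\seqi}(\Omega)} \Rho^{(\seqi)}_{\bm i}\,\varphi(\bm x^{(\bm h)}_{\bm i}) \Big| \;\le\; \|\nabla\varphi\|_{L^\infty}\,\|\bm \Rho^{(\seqi)}\|_{W^{-1,1}_{\bm h_\seqi}(\Omega)} \;=\; \|\nabla\varphi\|_{L^\infty}\,\|\rho_\seqi\|_{Y_\seqi} \;\to\; 0.
\]
Hence $\int_\Omega \rho_\seqi \varphi \to 0$, while $\int_\Omega \rho_\seqi \varphi \to \int_\Omega u\varphi$ by strong $L^2$ convergence; since $\varphi \in C_c^\infty(\Omega)$ was arbitrary, $u = 0$.

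\emph{Main obstacle.} The crux --- and the reason \eqref{hyp:geometric assumption} is imposed --- is the mismatch between the global space $B = L^2(\Omega)$ and the intrinsically interior nature of the discrete norms: $[\,\cdot\,]_{H^1_{\bm h}}$ registers no increment across $\partial\Omega$, so the Fréchet--Kolmogorov translation estimate genuinely degenerates in the boundary layer and must be supplemented by the $L^\infty$-based tightness estimate. The secondary point, handled by the splitting above, is that every discrete estimate carries an $O(\operatorname{diam} Q^{(\bm h_\seqi)})$ remainder that is small only along the sequence, not uniformly, so the first finitely many members of the sequence have to be treated using the ordinary continuity of translations in $L^2$.
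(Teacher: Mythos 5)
Your proof is correct, and part of it diverges from the paper's in an interesting way. For \ref{it:gallouet H2} you follow essentially the same route as the paper (near-optimal discrete flux, summation by parts against $\varphi(\bm x^{(\bm h)}_{\bm i})$, bound by $\|\nabla\varphi\|_{L^\infty}\|\bm \Rho^{(\seqi)}\|_{W^{-1,1}_{\bm h_\seqi}}$); you even make explicit the quadrature step identifying $|Q^{(\bm h)}|\sum_{\bm i}\Rho^{(\seqi)}_{\bm i}\varphi(\bm x^{(\bm h)}_{\bm i})$ with $\int_\Omega\rho_\seqi\varphi$ up to an $O(\bm h_\seqi)$ error, which the paper leaves implicit. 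For \ref{it:gallouet H1}, however, your argument is genuinely different: you apply the (local) Fr\'echet--Kolmogorov criterion directly to the piecewise constant functions, using the classical lattice-path translation estimate $\|\rho_\seqi(\cdot+z)-\rho_\seqi\|_{L^2(\omega)}\lesssim(|z|+\operatorname{diam}Q^{(\bm h_\seqi)})[\bm\Rho^{(\seqi)}]_{H^1_{\bm h_\seqi}}$, and you correctly dispose of the non-uniform $O(\operatorname{diam}Q^{(\bm h_\seqi)})$ remainder by treating the finitely many small indices separately. The paper instead builds a piecewise linear interpolant $u_\seqi$ over simplices, notes that its gradient is exactly the discrete difference quotient so that $u_\seqi$ is bounded in $H^1(\omega)$, invokes Rellich locally, and then shows $\|u_\seqi-\rho_\seqi\|_{L^2(\omega)}=O(|\bm h_\seqi|^{1/2})$; both proofs use \eqref{hyp:geometric assumption} together with the uniform $L^\infty$ bound in the same way to upgrade $L^2_{loc}$ convergence to $L^2(\Omega)$ convergence. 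Your route is the standard one in the finite-volume literature and avoids introducing a second interpolant, at the price of the edge-counting bookkeeping in the translation estimate and the finitely-many-exceptions device; the paper's route trades that bookkeeping for the comparison of two interpolants and an appeal to the classical Rellich theorem. Either is acceptable.
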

\begin{proof}
    Let us first prove \ref{it:gallouet H1}. Assume that $\rho_\seqi \in B_\seqi$ and $\sup_\seqi \|\rho_\seqi\|_{X_\seqi} < \infty$.
    Let $\omega \Subset \Omega$
    and $h_\seqi < \frac 1 {2\sqrt{d}} \dist(\omega, \partial \Omega)$.
    Then, for any simplex of vertexes adjacent $x_{\bm i}^{(\bm h)}$ that intersects $\omega$, the complete vertex lies in $\Omega$.
    Let $u_\seqi$ be the piece-wise linear interpolation over simplexes. That is, define for $\sigma_i \in \{+1,-1\}$ and $\theta_k \ge 0$ such that $\theta_0 + \cdots + \theta_d = 1$
    \begin{align*}
        u_\seqi \left(\theta_0 {\bm x_{\bm i}^{(\bm h)}} + \sum_{k=1}^d \theta_k \bm x_{\bm i+ (-1)^{\sigma_1} \bm e_k}^{(\bm h)} \right)
         & \defeq
        \theta_0 \rho_{\seqi}\left({\bm x_{\bm i}^{(\bm h)}}\right) + \sum_{k=1}^d \theta_k \rho_\seqi \left( \bm x_{\bm i+ (-1)^{\sigma_1} \bm e_k}^{(\bm h)} \right) \\
         & = \Rho^{(\seqi)}_{\bm i} + \sum_{k=1}^d \theta_k \left( \Rho^{(\seqi)}_{\bm i+ (-1)^{\sigma_1} \bm e_k} - \Rho^{(\seqi)}_{\bm i}  \right).
    \end{align*}
    By construction, in this simplex $\frac{\partial u_\seqi}{\partial x_k} = \frac{\bm \Rho^{(\seqi)}_{\bm i + \bm e_k} - \bm \Rho^{(\seqi)}_{\bm i }} {(\bm h_\seqi)_k}$.
    Due to the assumption \eqref{hyp:geometric assumption} there for $\seqi$ large enough $\omega$ is contained in the union of the simplexes and therefore
    \begin{equation*}
        \| u_{\seqi} \|_{L^\infty (\omega)} + \|\nabla u_\seqi\|_{L^2 (\omega)} \le \| u_\seqi \|_{X_\seqi} \le C.
    \end{equation*}
    This means that the sequence $u_{\seqi}$ is pre-compact in $L^2(\omega)$ for each $\omega \Subset \Omega$.
    Therefore, there exists a subsequence $u_{\seqi_\seqii}$ that converges in $L^2_{loc}(\Omega)$, and let $\rho$ be its limit.
    Since the $L^\infty (\omega)$ bound is independent of $\omega$, we have that $\rho \in  L^\infty (\Omega)$.
    For every $\omega \Subset \Omega$ we have that
    \begin{equation*}
        \| u_{\seqi_\seqii} - \rho\|_{L^2(\Omega)}
        \le
        \| u_{\seqi_\seqii} - \rho\|_{L^2(\omega)} + C |\Omega \setminus \omega|^{\frac 1 2} .
    \end{equation*}
    Therefore, we can estimate
    \begin{equation*}
        \limsup_{\seqii \to \infty } \| u_{\seqi_\seqii} - \rho\|_{L^2(\Omega)} \le C |\Omega \setminus \omega|^{\frac 1 2} .
    \end{equation*}
    Now we can take infimum in $\omega \Subset \Omega$ on the right-hand side to deduce that $\rho_{\seqi_\seqii} \to \rho$ in $L^2(\Omega)$.
    On the other hand, over each simplex we have that
    \begin{equation*}
        |u_\seqi (x) - \rho_\seqi(x)| \le \sum_{k=1}^d  \Big| \Rho^{(\seqi)}_{\bm i+ (-1)^{\sigma_1} \bm e_k} - \Rho^{(\seqi)}_{\bm i}  \Big|.
    \end{equation*}
    In particular, for any compact subset and $\seqi$ large enough we have that
    \begin{equation*}
        \int_{\omega} |u_\seqi - \rho_\seqi|^2 \le C |{\bm h_{\seqi}}|.
    \end{equation*}
    Arguing as before we have that $\rho_{\seqi_\seqii} \to \rho$ in $L^2 (\Omega)$.
    This proves \ref{it:gallouet H1}.

    We now prove \ref{it:gallouet H2}. Assume that $\rho_\seqi \in B_\seqi$, $\rho_\seqi \to \rho$ in $L^2(\Omega)$ and $\|\rho_\seqi\|_{Y_\seqi} \to 0$.
    Since this norm is bounded, we have that there exists $\bm F^{(\seqi)}$ such that
    \begin{equation*}
        \Rho^{(\seqi)}_{\bm i} = \sum_{k=1}^d \frac{F_{\bm i + \frac 1 2 \bm e_k}^{(\seqi)} - F_{\bm i - \frac 1 2 \bm e_k}^{(\seqi)} }{(\bm h_\seqi)_k} \text{ for all } \bm i \in \bm I_{\bm h_\seqi} (\Omega) \text{ and } |Q^{(\bm h)}| \sum_{\bm i \in \mathbb Z^d} \sum_k |F_{\bm i + \frac 1 2 \bm e_k}^{(\seqi)} | \to 0.
    \end{equation*}
    Let $\varphi \in C^\infty_c (\Omega)$ and consider it extended by $0$ outside $\Omega$.
    Then, we can estimate
    \begin{align*}
        |Q^{(\bm h_\seqi)}| \left| \sum_{\bm i \in \bm I_{\bm h_\seqi} (\Omega)} P^{(\seqi)}_{\bm i} \varphi(x_{\bm i}^{(\bm h_\seqi)}) \right|
         & =  |Q^{(\bm h_\seqi)}| \left| \sum_{\bm i \in \bm I_{\bm h_\seqi} (\Omega)} \sum_{k=1}^d \frac{F_{\bm i + \frac 1 2 \bm e_k}^{(\seqi)} - F_{\bm i - \frac 1 2 \bm e_k}^{(\seqi)} }{(\bm h_\seqi)_k} \varphi(x_{\bm i}^{(\bm h_\seqi)}) \right|
        \\
         & =  |Q^{(\bm h_\seqi)}| \left| \sum_{\bm i \in \mathbb Z^d} \sum_{k=1}^d \frac{F_{\bm i + \frac 1 2 \bm e_k}^{(\seqi)} - F_{\bm i - \frac 1 2 \bm e_k}^{(\seqi)} }{(\bm h_\seqi)_k} \varphi(x_{\bm i}^{(\bm h_\seqi)}) \right|
        \\
         & = |Q^{(\bm h_\seqi)}|\left| \sum_{\bm i \in \mathbb Z^d} \sum_{k=1}^d F^{(\seqi)}_{\bm i + \frac 1 2 \bm e_k} \frac{\varphi(x_{\bm i + \bm e_k}^{(\bm h_\seqi)} )- \varphi(x_{\bm i - \bm e_k}^{(\bm h_\seqi)} )}{(\bm h_\seqi)_k} \right|
        \\
         & \le \| \nabla \varphi \|_{L^\infty(\Omega)} |Q^{(\bm h_\seqi)}| \sum_{\bm i \in \mathbb Z^d} \sum_{k=1}^d \left| F^{(\seqi)}_{\bm i + \frac 1 2 \bm e_k} \right|
        \\
         & \to 0.
    \end{align*}
    Since $\varphi$ is compactly supported, and $\rho_\seqi$ converges in $L^2 (\Omega)$ we pass to the limit above to show that
    $
        \int_\Omega \rho \varphi = 0.
    $
    Given that this holds for any $\varphi \in C^\infty_c (\Omega)$, we conclude that $\rho = 0$. And we have proven \ref{it:gallouet H2}.
\end{proof}

\section{A priori estimates}
\label{sec:a priori estimates}
We devote this section to proving bounds of the numerical solutions in the norms $X_\seqi$ and $Y_\seqi$ defined in the previous section.
Before we provide an extension of the decomposition result in \cite{CarrilloFernandez-JimenezGomez-Castro} that follows through the same proof.
\begin{lemma}
    \label{lem:mob decomposition}
    Assume that $\mob \in W^{1,1}_{loc} (0,\alpha)$,
    $\mob : (0,\alpha) \to (0,\infty)$,
    $\mob(0^+) = \mob(\alpha^-) = 0$,
    and $\mob' \ge 0$ in $(0,\delta)$ and $\mob' \le 0$ in $(\alpha - \delta, \delta)$ for some $\delta > 0$.
    Then, we have \eqref{eq:mobility decomposition} with $\mobup, \mobdown \in W^{1,1}_{loc}(0,\alpha)$.
    If $\mob \in W^{1,\infty} (0,\alpha)$, then $\mobup, \mobdown \in W^{1,\infty} (0,\alpha)$.
\end{lemma}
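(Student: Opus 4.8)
The plan is to manufacture the two factors from the logarithmic derivative of $\mob$. Since $\mob \in W^{1,1}_{loc}(0,\alpha)$ is continuous and strictly positive on $(0,\alpha)$, it is bounded away from $0$ on every compact subinterval, so $\phi \defeq \mob'/\mob \in L^1_{loc}(0,\alpha)$. After replacing $\delta$ by a smaller value we may assume $0 < \delta < \alpha/2$ (the sign conditions on $\mob'$ persist on the smaller intervals). Writing $\phi = \phi_+ - \phi_-$ with $\phi_\pm \ge 0$, I would set
\begin{equation*}
    \mobup(s) \defeq \exp\Big( - \int_s^{\alpha/2} \phi_+(\sigma)\, d\sigma \Big), \qquad \mobdown(s) \defeq \frac{\mob(s)}{\mobup(s)}, \qquad s \in (0,\alpha).
\end{equation*}

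The first step is to check the algebraic and monotonicity properties. By construction $\mob = \mobup\mobdown$, both factors are positive and continuous, and $\mobup(\alpha/2) = 1$. Differentiating, $\mobup' = \phi_+\mobup \ge 0$, so $\mobup$ is non-decreasing, whereas $(\log\mobdown)' = \phi - \phi_+ = -\phi_- \le 0$, so $\mobdown$ is non-increasing. On any compact subinterval of $(0,\alpha)$ the function $\mobup$ is bounded above and below, $\phi_+ \in L^1$, and $\mob \in W^{1,1}_{loc}$; hence $\mobup' = \phi_+\mobup$ and $\mobdown = \mob\,\mobup^{-1}$ lie in $W^{1,1}_{loc}(0,\alpha)$.

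The second step is the behaviour at the endpoints. On $(0,\delta)$ we have $\mob' \ge 0$, so $\phi_+ = \phi$ there and $\int_s^{\delta}\phi_+ = \log\mob(\delta) - \log\mob(s)$; consequently $\mobup(s) = c_0\, \mob(s)$ on $(0,\delta)$ with $c_0 \defeq \mob(\delta)^{-1}\exp(-\int_\delta^{\alpha/2}\phi_+) \in (0,\infty)$, so that $\mobup(0^+) = c_0\,\mob(0^+) = 0$ and $\mobdown \equiv c_0^{-1}$ on $(0,\delta)$. On $(\alpha-\delta,\alpha)$ we have $\mob' \le 0$, so $\phi_+ = 0$ there, whence $\mobup$ is constant, equal to some $c_1 \in (0,\infty)$, and $\mobdown = \mob/c_1 \to 0$ as $s \to \alpha^-$. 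Extending $\mobup$ and $\mobdown$ continuously to $[0,\alpha]$ by these limits gives the decomposition \eqref{eq:mobility decomposition}.

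Finally, for the $W^{1,\infty}$ statement, assume $\mob \in W^{1,\infty}(0,\alpha)$. On $(0,\delta)$, $\mobup = c_0\mob$ so $\mobup' = c_0\mob' \in L^\infty$ and $\mobdown$ is constant; on $(\alpha-\delta,\alpha)$, $\mobup$ is constant and $\mobdown = \mob/c_1$ so $\mobdown' = \mob'/c_1 \in L^\infty$; on the remaining compact interval $[\delta,\alpha-\delta]$, $\mob$ is continuous and positive, hence $\ge m_0 > 0$, while $\mobup$ is bounded above (by $c_1$) and below (by $\mobup(\delta)>0$), so $\phi_+ \le \|\mob'\|_{L^\infty}/m_0$ and both $\mobup'$ and $\mobdown'$ are bounded there. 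Patching the three regions yields $\mobup, \mobdown \in W^{1,\infty}(0,\alpha)$. The one delicate point is exactly this last case near the two endpoints, where $\phi$ itself need not be bounded; it is precisely the identities $\mobup = c_0\mob$ near $0$ and $\mobup \equiv c_1$ near $\alpha$ that rescue it, and everything else is routine. This is, up to these adjustments at the endpoints, the argument of \cite{CarrilloFernandez-JimenezGomez-Castro}.
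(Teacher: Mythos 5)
Your construction is the paper's construction in lightly disguised form: your $\mobup(s)=\exp(-\int_s^{\alpha/2}\phi_+)=\exp\big(\int_{\alpha/2}^s(\mob')_+/\mob\big)$ is exactly the paper's $\mobup$, and defining $\mobdown=\mob/\mobup$ gives the same $\mobdown=\mob(\tfrac\alpha2)\exp\big(\int_{\alpha/2}^s(\mob')_-/\mob\big)$ after simplification. The proof is correct and follows the same route; you merely spell out the $W^{1,\infty}$ step near the endpoints (via $\mobup=c_0\mob$ on $(0,\delta)$ and $\mobup\equiv c_1$ on $(\alpha-\delta,\alpha)$), a detail the paper dismisses as ``obvious from the representation.''
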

\begin{proof}
    Notice that we can make the choice
    \begin{equation*}
        \mobup (s) \defeq  \exp\left( \int_{\frac \alpha 2}^s \frac{(\mob'(s))_+}{\mob(s)} ds \right)
        \qquad \text{and} \qquad
        \mobdown (s) \defeq \mob(\tfrac \alpha 2)\exp\left( \int_{\frac \alpha 2}^s \frac{(\mob'(s))_-}{\mob(s)} ds \right)
    \end{equation*}
    Since $\mob$ is continuous in $(0,\alpha)$ and positive, it is strictly positive over compacts. So the $W^{1,1}_{loc}$ claims are trivial.
    Since $\mob' \ge 0$, then $(0,\delta)$ then $\mobdown$ is constant in $(0,\delta)$. Then $\mobup(0^+) = \mob(0^+) / \mobdown(\delta) = 0$. And similarly at $s = \alpha$.
    The $W^{1,\infty}$ is obvious from the representation.
\end{proof}

\subsection{Evolution of the \texorpdfstring{$\sup/\inf$}{sup/inf}}
\begin{lemma}
    \label{lem:sup/inf}
    Let $\bm i_0 \in \bm I_{\bm h} (\Omega)$ be such that
    $
        \Rho^{n+1}_{\bm i_0} = \max_{\bm i } \Rho^{n+1}_{\bm i}
    $,
    then
    \begin{enumerate}
        \item If $\bm i_{0} + \bm e_k \in \bm I_{\bm h} (\Omega)$ then
              \begin{equation*}
                  F_{\bm i_0 + \frac 1 2 \bm e_k}^{n+1}
                  \ge
                  \mob(\Rho^{n+1}_{\bm i_0} )
                  v_{\bm i_0 + \frac 1 2 e_k}^{n+1}.
              \end{equation*}
        \item If $\bm i_{0} - \bm e_k \in \bm I_{\bm h} (\Omega)$ then
              \begin{equation*}
                  F_{\bm i_0 - \frac 1 2 \bm e_k}^{n+1}
                  \le
                  \mob(\Rho^{n+1}_{\bm i_0} )
                  v_{\bm i_0- \frac 1 2 e_k}^{n+1}.
              \end{equation*}
        \item Therefore, if $\bm i_{0} + \bm e_k , \bm i_{0} - \bm e_k \in \bm I_{\bm h} (\Omega)$ then
              \begin{equation}
                  \label{eq:sup as limsup}
                  \max_{\bm i } \Rho^{n+1}_{\bm i}
                  \le \max_{\bm i } \Rho^{n}_{\bm i} - \tau \mob \left( \Rho_{\bm i_0}^{n+1} \right)  \sum_{k=1}^d \frac{v^{n+1}_{\bm i_0 + \frac 1 2 \bm e_k} - v^{n+1}_{\bm i_0 - \frac 1 2 \bm e_k}}{h_k}
              \end{equation}
    \end{enumerate}
    The same results holds for $\min$.
\end{lemma}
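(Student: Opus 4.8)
The plan is to establish (1) and (2) by comparing the upwinded mobility $\mobupwind$, evaluated at the relevant pair of nodes, against $\mob(\Rho^{n+1}_{\bm i_0})$, using only that $\bm i_0$ is a maximizer together with the monotonicity built into \eqref{eq:mobility decomposition}; part (3) then follows at once from the scheme equation \eqref{eq:scheme equation}. Two observations will be used repeatedly. First, since $\mobup$ is non-decreasing with $\mobup(0)=0$ and $\mobdown$ is non-increasing with $\mobdown(\alpha)=0$, both factors are non-negative on $[0,\alpha]$; as the numerical unknowns of \eqref{eq:scheme} take values in $[0,\alpha]$, all of $\mobup(\Rho^{n+1}_{\bm i})$ and $\mobdown(\Rho^{n+1}_{\bm i})$ are non-negative, while $(v)_+\ge0$ and $(v)_-\le0$. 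Second, $\Rho^{n+1}_{\bm i_0}=\max_{\bm i}\Rho^{n+1}_{\bm i}$ gives $\Rho^{n+1}_{\bm i_0\pm\bm e_k}\le\Rho^{n+1}_{\bm i_0}$, hence $\mobup(\Rho^{n+1}_{\bm i_0\pm\bm e_k})\le\mobup(\Rho^{n+1}_{\bm i_0})$ and $\mobdown(\Rho^{n+1}_{\bm i_0\pm\bm e_k})\ge\mobdown(\Rho^{n+1}_{\bm i_0})$.

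For (1), since $\bm i_0,\bm i_0+\bm e_k\in\bm I$, I expand $F^{n+1}_{\bm i_0+\frac12\bm e_k}$ via \eqref{eq:scheme F,v,xi} and treat its $(v)_+$ and $(v)_-$ parts separately. In the $(v)_+$ part I lower the factor $\mobdown(\Rho^{n+1}_{\bm i_0+\bm e_k})$ to $\mobdown(\Rho^{n+1}_{\bm i_0})$ (admissible, as it multiplies $\mobup(\Rho^{n+1}_{\bm i_0})\ge0$ and $(v)_+\ge0$); in the $(v)_-$ part I raise the factor $\mobup(\Rho^{n+1}_{\bm i_0+\bm e_k})$ to $\mobup(\Rho^{n+1}_{\bm i_0})$, which \emph{decreases} that term because it multiplies $\mobdown(\Rho^{n+1}_{\bm i_0})\ge0$ and $(v)_-\le0$. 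Adding the two bounds and using $a=a_++a_-$ with $a=v^{n+1}_{\bm i_0+\frac12\bm e_k}$ yields $F^{n+1}_{\bm i_0+\frac12\bm e_k}\ge\mob(\Rho^{n+1}_{\bm i_0})\,v^{n+1}_{\bm i_0+\frac12\bm e_k}$. Part (2) is the mirror image: writing $\bm j=\bm i_0-\bm e_k$ so that $F^{n+1}_{\bm i_0-\frac12\bm e_k}=F^{n+1}_{\bm j+\frac12\bm e_k}$ with $\bm i_0$ now playing the role of the right-hand node, the analogous replacements (using $\mobup(\Rho^{n+1}_{\bm j})\le\mobup(\Rho^{n+1}_{\bm i_0})$ on the $(v)_+$ part and $\mobdown(\Rho^{n+1}_{\bm j})\ge\mobdown(\Rho^{n+1}_{\bm i_0})$ on the $(v)_-$ part) produce the reversed inequality $F^{n+1}_{\bm i_0-\frac12\bm e_k}\le\mob(\Rho^{n+1}_{\bm i_0})\,v^{n+1}_{\bm i_0-\frac12\bm e_k}$.

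Finally, for (3) I write \eqref{eq:scheme equation} at $\bm i=\bm i_0$, bound $\Rho^n_{\bm i_0}\le\max_{\bm i}\Rho^n_{\bm i}$, and insert the estimates of (1)--(2) after dividing by $h_k>0$; the signs line up so that $-F^{n+1}_{\bm i_0+\frac12\bm e_k}+F^{n+1}_{\bm i_0-\frac12\bm e_k}\le-\mob(\Rho^{n+1}_{\bm i_0})\big(v^{n+1}_{\bm i_0+\frac12\bm e_k}-v^{n+1}_{\bm i_0-\frac12\bm e_k}\big)$, which gives \eqref{eq:sup as limsup}. The statement for $\min$ follows from the same chain of steps with every monotonicity inequality reversed, since a minimizer $\bm i_0$ satisfies $\Rho^{n+1}_{\bm i_0\pm\bm e_k}\ge\Rho^{n+1}_{\bm i_0}$. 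I do not expect a genuine obstacle here: the only point demanding care is the sign bookkeeping when the monotonicity inequalities are multiplied by the fixed-sign quantities $\mobup(\Rho^{n+1}_{\bm i_0})\ge0$, $\mobdown(\Rho^{n+1}_{\bm i_0})\ge0$, $(v)_+\ge0$, and $(v)_-\le0$.
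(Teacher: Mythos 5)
Your proposal is correct and follows essentially the same route as the paper: the paper adds and subtracts $\mob(\Rho_{\bm i}^{n+1})v^{n+1}_{\bm i+\frac12\bm e_k}$ and shows the two remainder terms are non-negative (resp.\ non-positive) via the monotonicity of $\mobupwind(a,b)$ in each argument, which is exactly your term-by-term comparison of the $(v)_+$ and $(v)_-$ pieces at the maximizer. The sign bookkeeping, the reduction of part (2) to part (1) with the roles of the nodes exchanged, and the assembly of part (3) from the scheme equation all match the paper's argument.
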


\begin{proof}
    \newcommand{\thesup}{{P_{\bm i_0}^{n+1}}}
    If $\bm x_{\bm i} , x_{\bm i + \bm e_k} \in \Omega$ then, noticing that $\mob(a) = \mobupwind(a,a)$ we can write
    \begin{align*}
        F_{\bm i + \frac 1 2 \bm e_k}^{n+1}
         & =
        \mob(\Rho_{\bm i}^{n+1}) v^{n+1}_{\bm i + \frac 1 2 \bm e_k}
        + \Big( \mobupwind (\Rho_{\bm i}^{n+1}, \Rho_{\bm i + \bm e_k}^{n+1})- \mobupwind(\Rho_{\bm i}^{n+1},\Rho_{\bm i}^{n+1}) \Big) (v^{n+1}_{\bm i + \frac 1 2 \bm e_k})_+
        \\
         & \quad + \Big( \mobupwind (\Rho_{\bm i + \bm e_k}^{n+1},\Rho_{\bm i}^{n+1}) - \mobupwind (\Rho_{\bm i}^{n+1}, \Rho_{\bm i}^{n+1}) \Big) (v^{n+1}_{\bm i + \frac 1 2 \bm e_k})_-.
    \end{align*}
    Recalling that $\mobupwind(a,b)$ is non-decreasing in $a$ and non-increasing in $b$, that $a_+ \ge 0$ and $a_- \le 0$ we have that
    \begin{align*}
        F_{\bm i + \frac 1 2 \bm e_k}^{n+1}
         & \ge \mob(\Rho_{\bm i}^{n+1}) v^{n+1}_{\bm i + \frac 1 2 \bm e_k}
        + \Big( \mobupwind (\Rho_{\bm i}^{n+1}, \thesup)- \mobupwind(\Rho_{\bm i_0}^{n+1},\Rho_{\bm i}^{n+1}) \Big) (v^{n+1}_{\bm i + \frac 1 2 \bm e_k})_+
        \\
         & \quad + \Big( \mobupwind (\thesup,\Rho_{\bm i}^{n+1}) - \mobupwind (\Rho_{\bm i}^{n+1}, \Rho_{\bm i_0}^{n+1}) \Big) (v^{n+1}_{\bm i + \frac 1 2 \bm e_k})_-
    \end{align*}
    Evaluating at $i = i_0$ we prove the first claim.
    The second result can be proven analogously.
    Joining these two facts we get the last claim.
\end{proof}

\begin{lemma}
    \label{lem:free boundaries 1}
    Assume \ref{it:rho0 away from 0 and alpha}, \ref{it:U C2 and convex}, and \ref{it:V and W compactly supported}.
    Let
    \begin{equation*}
        \lambda = \| D^2 V \|_{L^\infty} + \|D^2 K\|_{L^\infty} \| \rho_0 \|_{L^1}.
    \end{equation*}
    Then, letting $\underline \Rho^n = \inf_{\bm i} \Rho_{\bm i}^n, \overline \Rho^n = \sup_{\bm i} \Rho_{\bm i}^n$ we have
    \begin{equation*}
        \underline \Rho^{n+1} + 2\tau \lambda d \mob(\underline \Rho^{n+1}) \ge \underline \Rho^n \qquad
        \alpha - \overline \Rho^{n+1} + 2\tau \lambda d \mob(\overline \Rho^{n+1}) \ge \alpha - \overline \Rho^n
    \end{equation*}
\end{lemma}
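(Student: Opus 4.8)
The plan is to test the scheme at a cell where $\Rho^{n+1}$ is largest and bound from above the discrete Laplacian of $\xi^{n+1}$ that appears; the estimate for $\underline\Rho^{n+1}$ then follows from the symmetric argument at a cell where $\Rho^{n+1}$ is smallest, with every inequality reversed and $\alpha-\overline\Rho$ playing the role of $\underline\Rho$. Fix $\bm i_0\in\bm I$ with $\Rho^{n+1}_{\bm i_0}=\overline\Rho^{n+1}$. \Cref{lem:sup/inf}(1)--(2) in fact hold for every direction $k$, since when $\bm i_0\pm\bm e_k\notin\bm I$ the flux $F$ and the velocity $v$ both vanish by \eqref{eq:scheme F,v,xi}; hence $F^{n+1}_{\bm i_0+\frac12\bm e_k}-F^{n+1}_{\bm i_0-\frac12\bm e_k}\ge\mob(\overline\Rho^{n+1})\big(v^{n+1}_{\bm i_0+\frac12\bm e_k}-v^{n+1}_{\bm i_0-\frac12\bm e_k}\big)$. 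Inserting this into \eqref{eq:scheme equation} at $\bm i=\bm i_0$ and using $\Rho^n_{\bm i_0}\le\overline\Rho^n$,
\[
  \overline\Rho^{n+1}-\overline\Rho^n\le\Rho^{n+1}_{\bm i_0}-\Rho^n_{\bm i_0}\le-\tau\,\mob(\overline\Rho^{n+1})\sum_{k=1}^d\frac{v^{n+1}_{\bm i_0+\frac12\bm e_k}-v^{n+1}_{\bm i_0-\frac12\bm e_k}}{h_k},
\]
so (the case $\mob(\overline\Rho^{n+1})=0$ being trivial) it is enough to prove that $S:=-\sum_{k=1}^d\big(v^{n+1}_{\bm i_0+\frac12\bm e_k}-v^{n+1}_{\bm i_0-\frac12\bm e_k}\big)/h_k\le2\lambda d$.

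In a direction $k$ with $\bm i_0\pm\bm e_k\in\bm I$ the corresponding summand of $S$ is the second difference $h_k^{-2}\big(\xi^{n+1}_{\bm i_0+\bm e_k}-2\xi^{n+1}_{\bm i_0}+\xi^{n+1}_{\bm i_0-\bm e_k}\big)$, while if a neighbour lies outside $\bm I$ only a one-sided first difference $h_k^{-2}(\xi^{n+1}_{\bm i_0+\bm e_k}-\xi^{n+1}_{\bm i_0})$ survives (recall $v=0$ on that face). I decompose $\xi^{n+1}$ as in \eqref{eq:scheme F,v,xi} into $U'(\Rho^{n+1})$, the confinement part $V_{(\cdot)}$, and the aggregation part $\Xi_{\bm i}:=|Q^{(\bm h)}|\sum_{\bm j\in\bm I}K_{\bm i,\bm j}\Rho^{n+\frac12}_{\bm j}$. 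Since $U'$ is non-decreasing and $\Rho^{n+1}_{\bm i_0}$ is the maximum, $U'(\Rho^{n+1}_{\bm i_0\pm\bm e_k})-U'(\Rho^{n+1}_{\bm i_0})\le0$, so the $U'$-contribution to $S$ is non-positive and may be dropped. For the confinement part, Taylor's formula along $[\bm x_{\bm i_0-\bm e_k},\bm x_{\bm i_0+\bm e_k}]\subset\Omega$ gives $|V_{\bm i_0+\bm e_k}-2V_{\bm i_0}+V_{\bm i_0-\bm e_k}|\le h_k^2\|D^2V\|_{L^\infty}$, and likewise $|K_{\bm i_0+\bm e_k,\bm j}-2K_{\bm i_0,\bm j}+K_{\bm i_0-\bm e_k,\bm j}|\le h_k^2\|D^2K\|_{L^\infty}$; summing the latter against $\Rho^{n+\frac12}_{\bm j}\ge0$ and invoking mass conservation — summing \eqref{eq:scheme equation} over $\bm i\in\bm I$ the fluxes telescope, so $|Q^{(\bm h)}|\sum_{\bm j}\Rho^{m}_{\bm j}$ is independent of $m$ and hence $|Q^{(\bm h)}|\sum_{\bm j}\Rho^{n+\frac12}_{\bm j}=|Q^{(\bm h)}|\sum_{\bm j}\Rho^{0}_{\bm j}\le\|\rho_0\|_{L^1}$ — yields $h_k^{-2}\big|\Xi_{\bm i_0+\bm e_k}-2\Xi_{\bm i_0}+\Xi_{\bm i_0-\bm e_k}\big|\le\|D^2K\|_{L^\infty}\|\rho_0\|_{L^1}$. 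Thus every interior direction contributes at most $\|D^2V\|_{L^\infty}+\|D^2K\|_{L^\infty}\|\rho_0\|_{L^1}=\lambda$ to $S$.

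It remains to control the directions with, say, $\bm i_0-\bm e_k\notin\bm I$: there the $U'$-part of $h_k^{-2}(\xi^{n+1}_{\bm i_0+\bm e_k}-\xi^{n+1}_{\bm i_0})$ is again $\le0$, but the $V$- and $\Xi$-parts are now first differences over $h_k^2$, a priori only of size $O(h_k^{-1})$. Here one uses that $\bm i_0-\bm e_k\notin\bm I$ forces $\dist(\bm x_{\bm i_0},\partial\Omega)=O(|\bm h|)$, together with $\nabla V=0$ (resp.\ $\nabla_xK(\cdot,y)=0$) on $\partial\Omega$ and $V,K(\cdot,y)\in W^{2,\infty}$: then $|\nabla V|\lesssim\|D^2V\|_{L^\infty}\,\dist(\cdot,\partial\Omega)$ near $\partial\Omega$, and a Taylor expansion bounds $h_k^{-2}|V_{\bm i_0+\bm e_k}-V_{\bm i_0}|$ by a constant multiple of $\|D^2V\|_{L^\infty}$, and similarly for $\Xi$ with the extra mass factor $\le\|\rho_0\|_{L^1}$. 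Absorbing these into the constant, each direction contributes at most $2\lambda$, so $S\le2\lambda d$ and the asserted recursion follows; the inequality for $\underline\Rho^{n+1}$ is the mirror statement. The genuinely delicate point is precisely this last step: near $\partial\Omega$ the discrete Laplacian of $\xi^{n+1}$ collapses to a one-sided difference over $h_k^2$, which is dangerous for a generic $W^{2,\infty}$ potential, and only the boundary vanishing of $\nabla V$ and $\nabla_xK$ (plus the mild regularity of $\partial\Omega$ that lets $\bm x_{\bm i_0}$ be joined to $\partial\Omega$ by a path in $\overline\Omega$ of comparable length) rescues the estimate — this is where the factor $2$ in the statement comes from. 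The other ingredients — mass conservation, $[0,\alpha]$-valuedness of the scheme, monotonicity of $U'$, and the interior Taylor bounds — are routine.
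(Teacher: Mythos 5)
Your proof is correct and follows essentially the same route as the paper: evaluate the scheme at the maximising cell, use the upwind monotonicity (\Cref{lem:sup/inf}) to replace the flux difference by $\mob(\overline\Rho^{n+1})$ times the discrete Laplacian of $\xi$, drop the $U'$ contribution by the discrete maximum principle, bound the $V$ and $K$ contributions by second-order Taylor expansions in the interior directions, and invoke the boundary vanishing of $\nabla V$ and $\nabla_x K$ to rescue the one-sided differences in directions where a neighbour leaves $\bm I$. The only cosmetic difference is that you make the mass-conservation bound on $|Q^{(\bm h)}|\sum_{\bm j}\Rho^{n+\frac12}_{\bm j}$ explicit, which the paper uses implicitly.
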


\begin{proof}
    Since $\bm I_{\bm h} (\Omega)$ is finite, there exists $\bm i_0 \in \bm I_{\bm h} (\Omega)$ such that
    \begin{equation*}
        \overline \Rho^{n+1} = \Rho_{\bm i_0}^{n+1}.
    \end{equation*}
    It suffices to check that
    \begin{equation}
        \label{eq:equivalent condition free boundaries 1}
        -\frac{F_{\bm i_0+\frac 1 2 \bm e_k}^{n+1} - F_{\bm i_0-\frac 1 2 \bm e_k}^{n+1}}{h_k} \le \lambda \mob(\overline \Rho^{n+1}).
    \end{equation}
    Since we are at a point of local maximum and $U'$ is non-decreasing ($U$ is convex)
    \begin{align}
        \label{eq:diffusion at maximum point 1}
        \frac{U'(\Rho_{\bm i_0 + \bm e_k}^{n+1}) - U'(\Rho_{\bm i_0}^{n+1})}{h_k}  & \le 0 \text{ if } \bm i_0 + \bm e_k \in \bm I_{\bm h} (\Omega), \\
        \label{eq:diffusion at maximum point 2}
        \frac{U'(\Rho_{\bm i_0}^{n+1}) - U'(\Rho_{\bm i_0 - \bm e_k}^{n+1}) }{h_k} & \ge 0 \text{ if } \bm i_0 - \bm e_k \in \bm I_{\bm h} (\Omega)
    \end{align}
    We use \Cref{lem:sup/inf} and discuss cases.
    We know look at one direction, $k$, and observe there are three possibilities
    \begin{enumeratecases}
        \case [We have $x_{\bm i_0 - \bm e_k} , x_{\bm i_0 + \bm e_k} \notin \Omega$]
        Then
        \begin{equation*}
            \frac{F_{\bm i_0 + \frac 1 2 e_k} - F_{\bm i_0 - \frac 1 2 e_k}}{h_k} = 0.
        \end{equation*}
        Then \eqref{eq:equivalent condition free boundaries 1} follows since $\lambda \ge 0$ and $\mob \ge 0$.

        \case [We have $x_{\bm i_0 + \bm e_k}, x_{\bm i_0 - \bm e_k} \in \Omega$]
        Applying \Cref{lem:sup/inf} we have
        \begin{equation*}
            -\frac{F_{\bm i+\frac 1 2 \bm e_k}^{n+1} - F_{\bm i-\frac 1 2 \bm e_k}^{n+1}}{h_k}  \le - \mob(\overline \Rho^{n+1})\frac{\xi_{\bm i_0 + \bm e_k} + \xi_{\bm i_0 - \bm e_k} - 2 \xi_{\bm i_0}}{h_k^2}.
        \end{equation*}
        Due to \eqref{eq:diffusion at maximum point 1} and \eqref{eq:diffusion at maximum point 2}
        \begin{gather*}
            -\frac{2 U'(\Rho_{\bm i_0}^{n+1}) - U'(\Rho_{\bm i_0 + \bm e_k}^{n+1})- U'(\Rho_{\bm i_0 - \bm e_k}^{n+1}) }{h_k} \le 0 .
        \end{gather*}
        We also observe that, by Taylor expansion there exists $\xi, \eta > 0$ such that
        \begin{align*}
             & \frac{V(x_{\bm i_0 + \bm e_k}) + V(x_{\bm i_0 - \bm e_k}) - 2 V(x_{\bm i_0})}{h_k^2}
            \\
             & =
            \frac{1}{h_k^2}\int_0^{h_k} \frac{\partial^2 V}{\partial x_k^2} (s) \frac{s^2}{2} ds
            - \frac{1}{h_k^2}\int_{-h_k}^0 \frac{\partial^2 V}{\partial x_k^2} (s) \frac{s^2}{2} ds
            \le \lambda.
        \end{align*}
        A similar computation holds for $K$.
        Hence, we can estimate
        \begin{align*}
             & \frac{\xi_{\bm i_0 + \bm e_k} + \xi_{\bm i_0 - \bm e_k} - 2 \xi_{\bm i_0}}{h_k^2}
            \\
             & \le \frac{V(x_{\bm i_0 + \bm e_k}) + V(x_{\bm i_0 - \bm e_k}) - 2 V(x_{\bm i_0})}{h_k^2}
            \\
             & \quad + |Q^{(\bm h)}| \sum_{\bm j }\frac{K(x_{\bm i_0 + \bm e_k},x_{\bm j}) + K(x_{\bm i_0 - \bm e_k}, x_{\bm j}) - 2 K(x_{\bm i_0}, x_{\bm j})}{h_k^2} \frac{\Rho_{\bm j}^{n+1} + \Rho_{\bm j}^n}{2} \\
             & \le \lambda.
        \end{align*}

        \case[We have $x_{\bm i_0 - \bm e_k} \in \Omega$ but $x_{\bm i_0 + \bm e_k} \notin \Omega$]
        \label{case:previous inside next outside}
        We must now use the fact that $V, K, \nabla V,$ and $\nabla_x K$ vanish on the boundary. Therefore, $V, K$ can be extended by zero to $\Rd$ as $W^{2,\infty}$ functions.
        There exists $\xi \in [0,1]$ such that
        \begin{align*}
            \frac{V(x_{\bm i_0 - \bm e_k}) - V(x_{\bm i_0})}{h_k^2}
             & = h_k^{-1} \frac{\partial V}{\partial x_k} ( x_{\bm i_0} - h_k \xi \bm e_k)
            \\
             & = h_k^{-1} \left( \frac{\partial V}{\partial x_k} (x_{\bm i_0} - h_k\xi \bm e_k ) - \frac{\partial V}{\partial x_k} (x_{\bm i_0} + h_k \bm e_k) \right)
            \\
             & = - h_k^{-1} \int_{-h_k \xi}^{h_k} \frac{\partial^2 V}{\partial x_k^2} (x_{\bm i_0} +  s \bm e_k) ds \le 2\lambda.
        \end{align*}
        A similar computation holds for $K$.
        Then using \Cref{lem:sup/inf} and \eqref{eq:diffusion at maximum point 2} and the previous results
        \begin{align*}
            -\frac{F_{\bm i+\frac 1 2 \bm e_k}^{n+1} - F_{\bm i-\frac 1 2 \bm e_k}^{n+1}}{h_k}
             & = h_k^{-1} F_{\bm i-\frac 1 2 \bm e_k}^{n+1}
            \le  \mob(\overline \Rho^{n+1}) \frac{ \xi_{\bm i_0 - \bm e_k} - \xi_{\bm i_0}}{h_k^2}
            \\
             & \le \mob(\overline \Rho^{n+1}) \frac{V(x_{\bm i_0 - \bm e_k}) - V(x_{\bm i_0})}{h_k^2}
            \\
             & \quad + \mob(\overline \Rho^{n+1})  |Q^{(\bm h)}| \sum_{\bm j }\frac{K(x_{\bm i_0 - \bm e_k}, x_{\bm j}) - K(x_{\bm i_0}, x_{\bm j})}{h_k^2} \Rho_{\bm j}^{n+1}
            \\
             & \le 2\lambda \mob(\overline \Rho^{n+1}).
        \end{align*}

        \case[$x_{\bm i_0 - \bm e_k} \notin \Omega$ but $x_{\bm i_0 + \bm e_k} \in \Omega$]
        Analogous to \Cref{case:previous inside next outside}. \qedhere
    \end{enumeratecases}

\end{proof}

\begin{lemma}
    \label{lem:free boundaries 2}
    In the hypothesis of \Cref{lem:free boundaries 1}, assume furthermore \ref{it:mob decomposed with Lipschitz pieces} and let $L \defeq \| \mob' \|_{L^\infty}$.
    Then
    \begin{equation*}
        \underline \Rho^{n} \ge (1 + 2\lambda \tau d L )^{-n} \underline \Rho^0, \qquad \alpha - \overline \Rho^n \ge (1 + 2\lambda \tau d L )^{-n} (\alpha - \overline \Rho^0).
    \end{equation*}
\end{lemma}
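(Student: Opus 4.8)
The plan is to read the conclusion almost directly off \Cref{lem:free boundaries 1}: that lemma already provides implicit one-step inequalities controlling how close $\underline\Rho^{n+1}$ can get to $0$ and $\overline\Rho^{n+1}$ to $\alpha$, and the only additional ingredient is that $\mob$ vanishes in a Lipschitz fashion at its two zeros. So the first step I would carry out is to record this elementary mobility estimate. By \ref{it:mob decomposed with Lipschitz pieces} the factors $\mobup,\mobdown$ lie in $W^{1,\infty}(0,\alpha)$, hence are bounded and Lipschitz on $[0,\alpha]$; therefore $\mob=\mobup\mobdown$ is Lipschitz on $[0,\alpha]$ with a.e.\ derivative of modulus at most $L=\|\mob'\|_{L^\infty(0,\alpha)}$. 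Since $\mob\ge 0$ and $\mob(0)=\mob(\alpha)=0$ by \eqref{eq:mobility decomposition}, integrating $\mob'$ gives, for every $s\in[0,\alpha]$,
\[
0\le \mob(s)=\int_0^s \mob'(t)\,dt\le L\,s
\qquad\text{and}\qquad
0\le \mob(s)=-\int_s^\alpha \mob'(t)\,dt\le L\,(\alpha-s).
\]
These apply at $s=\underline\Rho^{n+1}$ and $s=\overline\Rho^{n+1}$ because the iterates stay in $[0,\alpha]$ — which is anyway implicit in \Cref{lem:free boundaries 1}, where $\mob$ is evaluated precisely at those two values.

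Next I would substitute these bounds into \Cref{lem:free boundaries 1}. Using $\mob(\underline\Rho^{n+1})\le L\,\underline\Rho^{n+1}$ in its first inequality gives
\[
\underline\Rho^n\;\le\;\underline\Rho^{n+1}+2\lambda\tau d\,\mob(\underline\Rho^{n+1})\;\le\;(1+2\lambda\tau dL)\,\underline\Rho^{n+1},
\]
that is $\underline\Rho^{n+1}\ge (1+2\lambda\tau dL)^{-1}\underline\Rho^n$, and iterating from $n=0$ yields the first claim $\underline\Rho^n\ge (1+2\lambda\tau dL)^{-n}\underline\Rho^0$. The second claim is obtained identically: using $\mob(\overline\Rho^{n+1})\le L(\alpha-\overline\Rho^{n+1})$ in the second inequality of \Cref{lem:free boundaries 1} gives $\alpha-\overline\Rho^n\le (1+2\lambda\tau dL)(\alpha-\overline\Rho^{n+1})$, and iterating finishes the proof.

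I do not anticipate any genuine obstacle — the entire analytic content sits in \Cref{lem:free boundaries 1}, and what is left is the iteration of a linear recursion. The only point that deserves a careful sentence is the reduction of $\mob(\underline\Rho^{n+1})$ and $\mob(\overline\Rho^{n+1})$ to the linear quantities $L\underline\Rho^{n+1}$ and $L(\alpha-\overline\Rho^{n+1})$: this is exactly where the Lipschitz hypothesis \ref{it:mob decomposed with Lipschitz pieces} and the saturation structure $\mob(0)=\mob(\alpha)=0$ are used, and once it is in place the estimate is immediate.
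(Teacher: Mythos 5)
Your proposal is correct and follows essentially the same route as the paper: the key observation $\mob(s)\le Ls$ (and symmetrically $\mob(s)\le L(\alpha-s)$) from $\mob(0)=\mob(\alpha)=0$ and the Lipschitz bound, substituted into the one-step inequalities of \Cref{lem:free boundaries 1}, followed by induction on $n$. Your write-up is simply a more detailed version of the paper's argument.
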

\begin{proof}
    Since $m(0) = 0$ we have that
    $
        \mob(s) \le L s.
    $
    Then, we have that
    $$
        \underline \Rho^{n+1} + 2 \tau \lambda L d \underline \Rho^{n+1} \ge \underline \Rho^n,
    $$
    We can now apply induction. We proceed similarly for $\alpha - \overline \Rho^n$.
\end{proof}

\begin{remark}
    Notice that \Cref{lem:free boundaries 1,lem:free boundaries 2} fail if we remove the condition that $\nabla V, \nabla_x K$ vanish on the boundary. For example, let us consider the example in dimension for $\Omega = (0,1)$ with $\mob(\rho) = \rho(1-\rho)$, $U(\rho) = \rho^2, V(x) = x, K(x,y) = 0$, and $\rho_0 = \frac 1 2$.
    Let us write $I_h = \{1,\cdots, N\}$.
    In this case $\lambda = 0$. Then we would follow that $\Rho_{i}^n = 0.5$ for all $i, n$. But we would then have $v_{i+\frac 1 2}^n = - 1$ and $F_{i + \frac 1 2}^n = - \mob(\frac 1 2)$ for $i = 0, \cdots, N$. And we have $F_{-\frac 1 2}^n = F_{N+\frac 1 2}^n = 0$.
    Evaluating the equation at $i = 1$ we recover $0 = -\frac 1 h$.
\end{remark}

\subsection{Dissipation of a discrete version of the free energy \texorpdfstring{\eqref{eq:free energy}}{(\ref{eq:free energy})}}
In this section we discuss several alternatives to \eqref{eq:scheme P n + 1/2}:
\begin{enumerate}[label=\rm(O\arabic*), left=\parindent]
    \item
          \label{it:choice of rhon+1/2 for K pos def}
          $K_{\bm i \bm j}$ is positive semi-definite and $\Rho_{\bm i}^{n+\frac 1 2} = \Rho^{n+1}_{\bm i}$ for all $\bm i \in \bm I$,

    \item
          \label{it:choice of rhon+1/2 for K neg def}
          $K_{\bm i \bm j}$ is negative semi-definite and $\Rho_{\bm i}^{n+\frac 1 2} = \Rho_{\bm i}^n$ for all $\bm i \in \bm I$,

    \item
          \label{it:choice of rhon+1/2 for K generic}
          \eqref{eq:scheme P n + 1/2} and no assumption of $K_{\bm i \bm j}$.
\end{enumerate}
\begin{remark}
    As pointed out in \cite{BailoCarrilloHu2020}, the explicit schemes with $U'(\Rho^n)$ do not generally dissipate energy.
\end{remark}

We will specialise these problems differently depending on the nature of $V$ and $K$.
We consider the free energy
\begin{equation}
    \label{eq:free energy discrete}
    \begin{aligned}
        E_{\bm h}[\bm \Rho] & = |Q^{(\bm h)}|\sum_{\bm i} U(\Rho_{\bm i}) +  |Q^{(\bm h)}|\sum_{\bm i}  V_{\bm i} \Rho_{\bm i} + \frac 1 2 |Q^{(\bm h)}|^2 \sum_{\bm i, \bm j} \Rho_{\bm i} K_{\bm i\bm j} \Rho_{\bm j}.
    \end{aligned}
\end{equation}
Decay of this free energy was proven in \cite{BailoCarrilloHu2020} for linear mobility. Their proof works in our setting, since it does not rely on the structure on the structure of $\theta_{\bm i + \frac 1 2 \bm e_k}$. We reproduce it in our notation for the reader's convenience.
\begin{lemma}
    \label{lem:scheme free-energy dissipation}
    Assume that $\bm I \subset \mathbb Z^d$ is finite, $\bm P^n, (V_{\bm i})_{\bm i \in \bm I} \in \mathbb R^{\bm I}$, $(K_{\bm i, \bm j}) \in \mathbb R^{\bm I \times \bm I}$ and that $U$ is convex.
    Let $\bm \Rho^{n+1} \in \mathbb R^{\bm I}$ be a solution to \eqref{eq:scheme equation}-\eqref{eq:scheme F,v,xi} and either \ref{it:choice of rhon+1/2 for K pos def}, \ref{it:choice of rhon+1/2 for K neg def}, or \ref{it:choice of rhon+1/2 for K generic}.
    Then, we have that
    \begin{equation}
        \label{eq:decay of free energy}
        \tau |Q^{(\bm h)}|\sum_{k=1}^d \sum_{\bm i \in \mathbb Z^d}  \theta_{\bm i + \frac 1 2 \bm e_k}^{n+1} |v_{\bm i + \frac 1 2 \bm e_k}^{n+1}|^2 \le E_{\bm h}[\bm \Rho^n] - E_{\bm h}[\bm \Rho^{n+1}],
    \end{equation}
    where $\theta^{n+1}_{\bm i + \frac 1 2 \bm e_k}$ is given by \eqref{eq:theta}.
    If $U = 0$ and \ref{it:choice of rhon+1/2 for K generic}, then equality holds in \eqref{eq:decay of free energy}.
\end{lemma}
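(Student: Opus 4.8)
The strategy is the standard energy-dissipation argument for implicit finite-volume schemes: multiply the scheme \eqref{eq:scheme equation} by $\tau \xi_{\bm i}^{n+1}$, sum over $\bm i \in \bm I$, use summation by parts, and then bound each term of the free energy \eqref{eq:free energy discrete} from below using convexity. First I would write
\[
|Q^{(\bm h)}| \sum_{\bm i \in \bm I} (\Rho_{\bm i}^{n+1} - \Rho_{\bm i}^n) \xi_{\bm i}^{n+1}
= -\tau |Q^{(\bm h)}| \sum_{\bm i \in \bm I} \sum_{k=1}^d \frac{F_{\bm i + \frac 1 2 \bm e_k}^{n+1} - F_{\bm i - \frac 1 2 \bm e_k}^{n+1}}{h_k} \xi_{\bm i}^{n+1}.
\]
On the right, discrete summation by parts — using that $F^{n+1}_{\bm i + \frac 1 2 \bm e_k} = 0$ whenever one of $\bm i, \bm i + \bm e_k$ leaves $\bm I$, so there are no boundary contributions — converts the sum into $\tau |Q^{(\bm h)}| \sum_k \sum_{\bm i \in \mathbb Z^d} F^{n+1}_{\bm i + \frac 1 2 \bm e_k} \cdot \big(-\frac{\xi^{n+1}_{\bm i + \bm e_k} - \xi^{n+1}_{\bm i}}{h_k}\big) = \tau |Q^{(\bm h)}| \sum_k \sum_{\bm i} F^{n+1}_{\bm i + \frac 1 2 \bm e_k} v^{n+1}_{\bm i + \frac 1 2 \bm e_k}$. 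Using the decomposition $F^{n+1}_{\bm i + \frac 1 2 \bm e_k} = \theta^{n+1}_{\bm i + \frac 1 2 \bm e_k} v^{n+1}_{\bm i + \frac 1 2 \bm e_k}$ from \eqref{eq:theta}, the right-hand side equals $\tau |Q^{(\bm h)}| \sum_k \sum_{\bm i} \theta^{n+1}_{\bm i + \frac 1 2 \bm e_k} |v^{n+1}_{\bm i + \frac 1 2 \bm e_k}|^2$, which is exactly the left side of \eqref{eq:decay of free energy} (note $\theta \ge 0$ since $\mobup, \mobdown \ge 0$, so this term has a sign).

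**Estimating the left side.** It remains to show $|Q^{(\bm h)}| \sum_{\bm i} (\Rho_{\bm i}^{n+1} - \Rho_{\bm i}^n) \xi^{n+1}_{\bm i} \ge E_{\bm h}[\bm \Rho^{n+1}] - E_{\bm h}[\bm \Rho^n]$, treating the three parts of $\xi^{n+1}_{\bm i} = U'(\Rho^{n+1}_{\bm i}) + V_{\bm i} + |Q^{(\bm h)}|\sum_{\bm j} K_{\bm i \bm j} \Rho_{\bm j}^{n+\frac 1 2}$ separately. For the entropy term, convexity of $U$ gives $U(\Rho^{n+1}_{\bm i}) - U(\Rho^n_{\bm i}) \le U'(\Rho^{n+1}_{\bm i})(\Rho^{n+1}_{\bm i} - \Rho^n_{\bm i})$, so $|Q^{(\bm h)}|\sum_{\bm i} U'(\Rho^{n+1}_{\bm i})(\Rho^{n+1}_{\bm i} - \Rho^n_{\bm i}) \ge |Q^{(\bm h)}|\sum_{\bm i}(U(\Rho^{n+1}_{\bm i}) - U(\Rho^n_{\bm i}))$. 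The confinement term is linear, hence an exact identity: $|Q^{(\bm h)}|\sum_{\bm i} V_{\bm i}(\Rho^{n+1}_{\bm i} - \Rho^n_{\bm i})$ equals the difference of the confinement energies. For the interaction term I would write $A = (K_{\bm i\bm j})$, treat $\Rho^{n+1}, \Rho^n$ as vectors, and compute $|Q^{(\bm h)}|^2 \sum_{\bm i \bm j}(\Rho^{n+1}_{\bm i} - \Rho^n_{\bm i}) K_{\bm i \bm j} \Rho^{n+\frac 1 2}_{\bm j}$; using symmetry of $K$ and the algebraic identity $(u - w)^\top A\, \tfrac{u + w}{2} = \tfrac 1 2 (u^\top A u - w^\top A w)$, option \ref{it:choice of rhon+1/2 for K generic} gives this exactly as the difference of the quadratic energies. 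For \ref{it:choice of rhon+1/2 for K pos def} (resp. \ref{it:choice of rhon+1/2 for K neg def}), where $\Rho^{n+\frac 1 2} = \Rho^{n+1}$ (resp. $\Rho^n$), the identity $(u-w)^\top A u = \tfrac 1 2 u^\top A u - \tfrac 1 2 w^\top A w + \tfrac 1 2 (u-w)^\top A (u-w)$ shows the difference is the quadratic energy gap plus (resp. minus) the sign-definite quadratic form $\tfrac 1 2 (\Rho^{n+1} - \Rho^n)^\top A (\Rho^{n+1} - \Rho^n) \ge 0$, giving the inequality in the right direction. Summing the three contributions yields $|Q^{(\bm h)}| \sum_{\bm i}(\Rho^{n+1}_{\bm i} - \Rho^n_{\bm i})\xi^{n+1}_{\bm i} \ge E_{\bm h}[\bm \Rho^{n+1}] - E_{\bm h}[\bm \Rho^n]$, and combining with the first paragraph gives \eqref{eq:decay of free energy}.

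**The equality case.** When $U = 0$ and \ref{it:choice of rhon+1/2 for K generic}, every inequality above was used as an exact identity — the $U$-term drops out, the $V$-term is linear, and the $K$-term with the midpoint choice is the exact quadratic identity — so \eqref{eq:decay of free energy} becomes an equality.

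**Main obstacle.** The only subtle point is bookkeeping in the summation by parts: one must verify carefully that extending the inner sum from $\bm i \in \bm I$ to $\bm i \in \mathbb Z^d$ introduces no spurious boundary terms, which hinges precisely on the convention in \eqref{eq:scheme F,v,xi} that $F^{n+1}_{\bm i + \frac 1 2 \bm e_k} = 0$ and $v^{n+1}_{\bm i + \frac 1 2 \bm e_k} = 0$ unless both $\bm i$ and $\bm i + \bm e_k$ lie in $\bm I$ — this is the discrete no-flux condition and is what makes the scheme conservative. Everything else is routine convexity and linear algebra; no regularity of $U$ beyond convexity and no structural property of $\theta$ (beyond nonnegativity) is needed, which is why the argument of \cite{BailoCarrilloHu2020} transfers verbatim.
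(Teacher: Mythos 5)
Your overall strategy is exactly the paper's: test the scheme with $\xi^{n+1}_{\bm i}$, sum by parts using the zero-flux convention, and bound each piece of $E_{\bm h}$ by convexity for $U$, linearity for $V$, and the three quadratic identities for $K$ according to \ref{it:choice of rhon+1/2 for K pos def}--\ref{it:choice of rhon+1/2 for K generic}. Your treatment of the interaction term and of the equality case is correct and coincides with the paper's.

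However, there is a sign error in the summation by parts which, as written, prevents the argument from closing. The discrete integration by parts (with no boundary terms, as you correctly justify) reads
\begin{equation*}
    \sum_{\bm i} \frac{F^{n+1}_{\bm i + \frac 1 2 \bm e_k} - F^{n+1}_{\bm i - \frac 1 2 \bm e_k}}{h_k}\, \xi^{n+1}_{\bm i}
    \;=\; - \sum_{\bm i} F^{n+1}_{\bm i + \frac 1 2 \bm e_k}\, \frac{\xi^{n+1}_{\bm i + \bm e_k} - \xi^{n+1}_{\bm i}}{h_k}
    \;=\; + \sum_{\bm i} F^{n+1}_{\bm i + \frac 1 2 \bm e_k}\, v^{n+1}_{\bm i + \frac 1 2 \bm e_k},
\end{equation*}
so that, after multiplying by the overall factor $-\tau |Q^{(\bm h)}|$, one obtains
\begin{equation*}
    |Q^{(\bm h)}| \sum_{\bm i}\big(\Rho^{n+1}_{\bm i} - \Rho^n_{\bm i}\big)\,\xi^{n+1}_{\bm i}
    \;=\; -\,\tau |Q^{(\bm h)}| \sum_{k=1}^d\sum_{\bm i} \theta^{n+1}_{\bm i + \frac 1 2 \bm e_k}\,\big|v^{n+1}_{\bm i + \frac 1 2 \bm e_k}\big|^2 \;\le\; 0,
\end{equation*}
i.e.\ the dissipation appears with a \emph{minus} sign. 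You instead claim the reindexed sum equals $\sum_{\bm i} F^{n+1}_{\bm i+\frac 1 2\bm e_k}\bigl(-\tfrac{\xi^{n+1}_{\bm i+\bm e_k}-\xi^{n+1}_{\bm i}}{h_k}\bigr)$, which omits the minus sign produced by the Abel summation itself and yields $+\tau|Q^{(\bm h)}|\sum\theta|v|^2$. Combined with your (correct) lower bound $|Q^{(\bm h)}|\sum_{\bm i}(\Rho^{n+1}_{\bm i}-\Rho^n_{\bm i})\xi^{n+1}_{\bm i} \ge E_{\bm h}[\bm\Rho^{n+1}]-E_{\bm h}[\bm\Rho^{n}]$, your version only gives $E_{\bm h}[\bm\Rho^{n+1}] \le E_{\bm h}[\bm\Rho^{n}] + \tau|Q^{(\bm h)}|\sum\theta|v|^2$, which is not \eqref{eq:decay of free energy}. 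With the correct sign the two estimates combine to $E_{\bm h}[\bm\Rho^{n+1}]-E_{\bm h}[\bm\Rho^{n}] \le -\tau|Q^{(\bm h)}|\sum\theta|v|^2$, which is the lemma. Once this single sign is fixed, your proof is the paper's proof.
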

\begin{proof}
    Multiplying in the equation by $\xi_{\bm i}^{n+1}$ and adding in $\bm i$ we get using the no-flux condition that
    \begin{equation}
        \label{eq:free energy integrated by parts}
        \begin{aligned}
            \sum_{\bm i} & \frac{\Rho_{\bm i}^{n+1} - \Rho_{\bm i}^{n}}{\tau} \Big( U'(\Rho_{\bm i}^{n+1}) + V_{\bm i} + |Q^{(\bm h)}| \sum_{\bm j}  K_{\bm i\bm j} \Rho_{\bm i}^{n + \frac 1 2} \Big) =\sum_{\bm i} \frac{\Rho_{\bm i}^{n+1} - \Rho_{\bm i}^{n}}{\tau} \xi_{\bm i}^{n+1} \\
                         & = -\sum_{\bm i}\sum_{k=1}^d\frac{F^{n+1}_{\bm i + \frac 1 2 \bm e_k} - F_{\bm i -\frac 1 2 \bm e_k}^{n+1}}{h_k}  \xi_{\bm i}^{n+1}
            = \sum_{\bm i}\sum_{k=1}^d F_{\bm i + \frac 1 2 \bm e_k} \frac{ \xi_{\bm i + \bm e_k}^{n+1} -  \xi_{\bm i}^{n+1}}{h_k}                                                                                                                                                        \\
                         & = - \sum_{\bm i}\sum_{k=1}^d \theta_{\bm i + \frac 1 2 \bm e_k}^{n+1} |v_{\bm i + \frac 1 2 \bm e_k}^{n+1}|^2.
        \end{aligned}
    \end{equation}
    Because $U$ is convex it holds that for any $a, b\ge 0$
    \begin{equation}
        \label{eq:convexity}
        U(b) \ge U(a) + U'(a) (b-a).
    \end{equation}
    Therefore we can write
    \begin{equation*}
        \sum_{\bm i} U(\Rho^{n+1}_{\bm i}) - \sum_{\bm i} U(\Rho^n_{\bm i}) \le \sum_{\bm i} U'(\Rho_{\bm i}^{n+1}) (\Rho_{\bm i}^{n+1} - \Rho_{\bm i}^{n}).
    \end{equation*}
    Clearly, due to linearity
    \begin{equation*}
        \sum_{\bm i} V_{\bm i} \Rho^{n+1}_{\bm i} - \sum_{\bm i} V_{\bm i }\Rho^n_{\bm i} = \sum_{\bm i} V_{\bm i} (\Rho_{\bm i}^{n+1} - \Rho_{\bm i}^{n})
    \end{equation*}
    The last term follows we distinguish three cases:
    \begin{enumerate}[left=\parindent]
        \item[\ref{it:choice of rhon+1/2 for K pos def}] If $K_{\bm i \bm j}$ is positive semi-definite, then
              \begin{align*}
                  \sum_{\bm i} & {(\Rho_{\bm i}^{n+1} - \Rho_{\bm i}^{n})}
                  \sum_{\bm j}  K_{\bm i\bm j} \Rho_{\bm i}^{n + 1}
                  \\
                               & =
                  \sum_{\bm i \bm j} K_{\bm i \bm j} \Rho_{\bm i}^{n+1} \Rho_{\bm j}^{n+1} -\sum_{\bm i \bm j} K_{\bm i \bm j} \Rho_{\bm i}^n \Rho_{\bm j}^{n+1} \\
                               & =
                  \frac 1 2 \sum_{\bm i \bm j} K_{\bm i \bm j} \Rho_{\bm i}^{n+1} \Rho_{\bm j}^{n+1}
                  - \frac 1 2  \sum_{\bm i \bm j} K_{\bm i \bm j} \Rho_{\bm i}^{n} \Rho_{\bm j}^{n}                                                              \\
                               & \qquad
                  + \frac 1 2 \left( \sum_{\bm i \bm j} K_{\bm i \bm j} \Rho_{\bm i}^{n+1} \Rho_{\bm j}^{n+1}
                  - 2 \sum_{\bm i \bm j} K_{\bm i \bm j} \Rho_{\bm i}^{n+1} \Rho_{\bm j}^{n}
                  + \sum_{\bm i \bm j} K_{\bm i \bm j} \Rho_{\bm i}^{n} \Rho_{\bm j}^{n} \right)
                  \\
                               & \ge  \frac 1 2 \sum_{\bm i \bm j} K_{\bm i \bm j} \Rho_{\bm i}^{n+1} \Rho_{\bm j}^{n+1}
                  - \frac 1 2  \sum_{\bm i \bm j} K_{\bm i \bm j} \Rho_{\bm i}^{n} \Rho_{\bm j}^{n}          ,
              \end{align*}
              and we can use $\Rho^{n+\frac 1 2} = \Rho^{n+1}$.

        \item[\ref{it:choice of rhon+1/2 for K neg def}] Conversely, if $K_{\bm i \bm j}$ is negative semi-definite, we proceed analogously.

        \item[\ref{it:choice of rhon+1/2 for K generic}] If we make no assumption on $K$, we use symmetry
              \begin{align*}
                   & \frac 1 2 \sum_{\bm i, \bm j}
                  K_{\bm i \bm j}\Rho_{\bm i}^{n+1} \Rho_{\bm j}^{n+1} - \frac 1 2 \sum_{\bm i, \bm j} K_{\bm i \bm j}\Rho_{\bm i}^{n} \Rho_{\bm j}^{n}
                  \\
                   & = \frac 1 2 \sum_{\bm i, \bm j}  K_{\bm i \bm j}\Rho_{\bm i}^{n+1} \Rho_{\bm j}^{n+1}
                  - \frac 1 2 \sum_{\bm i, \bm j} K_{\bm i \bm j}\Rho_{\bm i}^{n} \Rho_{\bm j}^{n}
                  + \frac 1 2 \sum_{\bm i, \bm j} K_{\bm i \bm j}\Rho_{\bm i}^{n+1} \Rho_{\bm j}^{n}
                  - \frac 1 2 \sum_{\bm i, \bm j} K_{\bm i \bm j}\Rho_{\bm i}^{n} \Rho_{\bm j}^{n+1}
                  \\
                   & = \sum_{\bm i, \bm j} K_{\bm i, \bm j}\left({\Rho_{\bm i}^{n+1} - \Rho_{\bm i}^n}\right)\frac{ \Rho_{\bm j}^{n+1} + \Rho_{\bm j}^n }{2}
                  = \sum_{\bm i, \bm j} K_{\bm i, \bm j} \Rho_{\bm i}^{n + \frac 1 2}\left({\Rho_{\bm i}^{n+1} - \Rho_{\bm i}^n}\right).
              \end{align*}
    \end{enumerate}
    Therefore, we have shown that
    \begin{equation*}
        E_{\bm h}[\bm \Rho^n] - E_{\bm h}[\bm \Rho^{n+1}]
        \ge
        \tau |Q^{(\bm h)}|
        \sum_{\bm i}  \frac{\Rho_{\bm i}^{n+1} - \Rho_{\bm i}^{n}}{\tau} \Big( U'(\Rho_{\bm i}^{n+1}) + V_{\bm i} + |Q^{(\bm h)}| \sum_{\bm j}  K_{\bm i\bm j} \Rho_{\bm i}^{n + \frac 1 2} \Big).
    \end{equation*}
    Joining this with \eqref{eq:free energy integrated by parts} we conclude the result.
    Notice that if $U = 0$ and \ref{it:choice of rhon+1/2 for K generic} we only have equalities.
\end{proof}

\subsection{Time compactness}

\begin{lemma}
    \label{lem:time continuity}
    Let $\bm P^n, \bm P^{n+1}$ be solutions to \eqref{eq:scheme equation}--\eqref{eq:scheme P n + 1/2} and let $E_{\bm h}$ be given by \eqref{eq:free energy discrete}. Then, we have that
    \begin{equation}
        \label{eq:time continuity to W-1,1}
        \tau \left\| \frac{\bm \Rho^{n+1} - \bm \Rho^{n}}{\tau} \right\|_{W^{-1,1}_{\bm h} (\bm I)}^2 \le d{|Q^{(\bm h)}|} |\bm I| \|\mobup\|_{L^\infty}\|\mobdown\|_{L^\infty}
        \Big( E_{\bm h}[\bm \Rho^{n}] - E_{\bm h}[\bm \Rho^{n+1}] \Big).
    \end{equation}
\end{lemma}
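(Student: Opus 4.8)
The plan is to read off from the scheme an explicit admissible flux for the difference quotient $\tfrac{\bm\Rho^{n+1}-\bm\Rho^n}{\tau}$ in the definition \eqref{eq:W-1,1 defn}, and then to estimate the $\ell^1$-norm of that flux by the energy dissipation of \Cref{lem:scheme free-energy dissipation} through a Cauchy--Schwarz argument. Concretely, the identity \eqref{eq:scheme equation} says exactly that $-\bm F^{n+1}$, extended by $0$ outside $\bm I$ as in \eqref{eq:scheme F,v,xi}, is an admissible flux for $\tfrac{\bm\Rho^{n+1}-\bm\Rho^n}{\tau}$, so by the very definition of the discrete $W^{-1,1}_{\bm h}$-norm,
\begin{equation*}
    \left\| \frac{\bm\Rho^{n+1}-\bm\Rho^n}{\tau} \right\|_{W^{-1,1}_{\bm h}(\bm I)}
    \le |Q^{(\bm h)}| \sum_{k=1}^d \sum_{\bm i \in \mathbb Z^d} \left| F^{n+1}_{\bm i + \frac 1 2 \bm e_k} \right|.
\end{equation*}

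Next I would invoke the factorisation $F^{n+1}_{\bm i + \frac 1 2 \bm e_k} = \theta^{n+1}_{\bm i + \frac 1 2 \bm e_k}\, v^{n+1}_{\bm i + \frac 1 2 \bm e_k}$ from \eqref{eq:theta}. The weight $\theta^{n+1}_{\bm i + \frac 1 2 \bm e_k}$ is nonnegative, since $\mobupwind(a,b)=\mobup(a)\mobdown(b)\ge 0$ on $[0,\alpha]^2$ (as $\mobup$ is nondecreasing with $\mobup(0)=0$ and $\mobdown$ nonincreasing with $\mobdown(\alpha)=0$); it vanishes unless $\bm i,\bm i+\bm e_k\in\bm I$; and it is bounded above by $\|\mobup\|_{L^\infty}\|\mobdown\|_{L^\infty}$. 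Writing $|F^{n+1}_{\bm i + \frac 1 2 \bm e_k}| = (\theta^{n+1}_{\bm i + \frac 1 2 \bm e_k})^{1/2}\,(\theta^{n+1}_{\bm i + \frac 1 2 \bm e_k})^{1/2}|v^{n+1}_{\bm i + \frac 1 2 \bm e_k}|$ and applying the discrete Cauchy--Schwarz inequality over the index set $\{1,\dots,d\}\times\mathbb Z^d$,
\begin{equation*}
    \sum_{k=1}^d\sum_{\bm i\in\mathbb Z^d} \left| F^{n+1}_{\bm i + \frac 1 2 \bm e_k} \right|
    \le \left( \sum_{k=1}^d\sum_{\bm i\in\mathbb Z^d} \theta^{n+1}_{\bm i + \frac 1 2 \bm e_k} \right)^{1/2}
    \left( \sum_{k=1}^d\sum_{\bm i\in\mathbb Z^d} \theta^{n+1}_{\bm i + \frac 1 2 \bm e_k} \left| v^{n+1}_{\bm i + \frac 1 2 \bm e_k} \right|^2 \right)^{1/2}.
\end{equation*}
For the first factor, for each $k$ there are at most $|\bm I|$ indices $\bm i$ with $\bm i,\bm i+\bm e_k\in\bm I$, so the sum is at most $d\,|\bm I|\,\|\mobup\|_{L^\infty}\|\mobdown\|_{L^\infty}$. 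For the second factor, \Cref{lem:scheme free-energy dissipation} (applicable since $\bm P^n,\bm P^{n+1}$ solve \eqref{eq:scheme equation}--\eqref{eq:scheme P n + 1/2} and $U$ is convex) gives $|Q^{(\bm h)}|\sum_{k,\bm i}\theta^{n+1}_{\bm i+\frac12\bm e_k}|v^{n+1}_{\bm i+\frac12\bm e_k}|^2 \le \tau^{-1}\big(E_{\bm h}[\bm\Rho^n]-E_{\bm h}[\bm\Rho^{n+1}]\big)$.

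Combining the three displays, squaring, and multiplying through by $\tau$ produces exactly \eqref{eq:time continuity to W-1,1}. I expect the whole argument to be essentially bookkeeping: the only points needing care are keeping the sign convention straight so that $-\bm F^{n+1}$ is genuinely admissible for the difference quotient, checking the nonnegativity and the uniform bound on $\theta^{n+1}$, and counting the active faces so that the cardinality factor $|\bm I|$ appears with the correct power. There is no genuine analytic difficulty here, since the energy dissipation inequality of \Cref{lem:scheme free-energy dissipation} supplies the crucial quantitative control.
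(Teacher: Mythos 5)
Your proposal is correct and follows essentially the same route as the paper: bound the $W^{-1,1}_{\bm h}$-norm by the $\ell^1$-norm of the admissible flux $-\bm F^{n+1}$, factor $F=\theta v$, apply Cauchy--Schwarz, bound $\sum\theta$ by $d|\bm I|\,\|\mobup\|_{L^\infty}\|\mobdown\|_{L^\infty}$, and control $\sum\theta|v|^2$ via the energy dissipation of \Cref{lem:scheme free-energy dissipation}. No discrepancies to report.
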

\begin{proof}
    Using the free energy dissipation
    \begin{equation*}
        \begin{aligned}
            \left\| \frac{\bm \Rho^{n+1} - \bm \Rho^{n}}{\tau} \right\|_{W^{-1,1}_{\bm h} (\bm I)}
             & \le  |Q^{(\bm h)}| \sum_{k=1}^d \sum_{\bm i } |F^{n+1}_{\bm i + \frac 1 2 \bm e_k}|                                                            \\
             & = |Q^{(\bm h)}|  \sum_{k=1}^d \sum_{\bm i} \theta_{\bm i+\frac 1 2 \bm e_k}^{n+1}|v^{n+1}_{\bm i + \frac 1 2 \bm e_k}|                         \\
             & \le \left(|Q^{(\bm h)}| \sum_{k=1}^d\sum_{\bm i } \theta_{\bm i+\frac 1 2 \bm e_k}^{n+1} \right) ^{\frac 1 2}
            \left( |Q^{(\bm h)}|  \sum_{k=1}^d \sum_{\bm i} \theta_{\bm i+\frac 1 2 \bm e_k}^{n+1}|v^{n+1}_{\bm i + \frac 1 2 \bm e_k}|^2 \right)^{\frac 1 2} \\
             & \le \left(d|Q^{(\bm h)}| |\bm I| \|\mobup\|_{L^\infty}\|\mobdown\|_{L^\infty} \right)^{\frac 1 2}
            \left( \frac{E_{\bm h}[\bm \Rho^{n}] - E_{\bm h}[\bm \Rho^{n+1}]}{\tau} \right)^{\frac 1 2}
            .
        \end{aligned}
    \end{equation*}
    Taking the power $2$ we recover the estimate.
\end{proof}
\begin{remark}
    Notice that due to \eqref{hyp:geometric assumption} we have that $|Q^{(\bm h_\seqi)}| |\bm I_{\bm h_\seqi} (\Omega)| \to |\Omega|$ as $\bm h_\seqi \to 0$.
\end{remark}

\subsection{Dissipation formulas for other energies}
\begin{lemma}
    \label{prop:Sobolev-type estimate}
    Assume that $\bm I \subset \mathbb Z^d$ is finite, $\bm P^n, (V_{\bm i})_{\bm i \in \bm I} \in \mathbb R^{\bm I}$, $(K_{\bm i, \bm j}) \in \mathbb R^{\bm I \times \bm I}$.
    Let $\bm \Rho^{n+1} \in \mathbb R^{\bm I}$ be a solution to \eqref{eq:scheme equation}-\eqref{eq:scheme F,v,xi}.
    Furthermore, assume that $\Rho_{\bm i}^{n}, \Rho_{\bm i}^{n+1} \in [\underline \Rho, \overline \Rho]$ for $\bm i \in \bm I$,
    $H \in C^2([\underline \Rho, \overline \Rho])$ is convex,
    and $\Lambda$ be such that
    \[
        \mob(s) \Lambda''(s) = H''(s) \text{ for } s \in [\underline \Rho, \overline \Rho].
    \]
    Then
    \begin{equation}
        \label{eq:energy control with general H}
        |Q^{(\bm h)}| \sum_{\bm i \in \bm I} \Lambda(\Rho_{\bm i}^{n+1})
        - \tau |Q^{(\bm h)}| \sum_{k=1}^d \sum_{\substack{\bm i \in \bm I \text{ s.t.}  \\ \bm i + \bm e_k \in \bm I}}  \frac{ H'(\Rho_{\bm i + \bm e_k}^{n+1}) - H'(\Rho_{\bm i}^{n+1})}{h_k}v^{n+1}_{\bm i + \frac 1 2 \bm e_{k}}
        \le
        |Q^{(\bm h)}| \sum_{\bm i \in \bm I} \Lambda(\Rho_{\bm i}^{n}).
    \end{equation}
\end{lemma}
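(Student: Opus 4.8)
The plan is to mimic the proof of \Cref{lem:scheme free-energy dissipation}, testing the scheme against $\Lambda'(\Rho_{\bm i}^{n+1})$ instead of against $\xi_{\bm i}^{n+1}$, and replacing the convexity inequality for $U$ by one for $\Lambda$. First note that $\Lambda$ is convex on $[\underline\Rho,\overline\Rho]$: since $H$ is convex, $H''\ge 0$, and $\mob>0$ on the relevant interval (implicit in $\Lambda$ being well defined), so $\Lambda''=H''/\mob\ge 0$. Multiplying \eqref{eq:scheme equation} by $\tau\Lambda'(\Rho_{\bm i}^{n+1})$, summing over $\bm i\in\bm I$, and using the convexity inequality $\Lambda(\Rho_{\bm i}^{n})\ge \Lambda(\Rho_{\bm i}^{n+1})+\Lambda'(\Rho_{\bm i}^{n+1})(\Rho_{\bm i}^{n}-\Rho_{\bm i}^{n+1})$, one obtains
\[
    \sum_{\bm i\in\bm I}\big(\Lambda(\Rho_{\bm i}^{n+1})-\Lambda(\Rho_{\bm i}^{n})\big)
    \le
    -\tau\sum_{\bm i\in\bm I}\sum_{k=1}^{d}\frac{F_{\bm i+\frac12\bm e_k}^{n+1}-F_{\bm i-\frac12\bm e_k}^{n+1}}{h_k}\Lambda'(\Rho_{\bm i}^{n+1}).
\]

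Next I perform a discrete integration by parts. Because $F_{\bm i+\frac12\bm e_k}^{n+1}$ vanishes unless both $\bm i$ and $\bm i+\bm e_k$ belong to $\bm I$, reindexing the telescoping sum produces no boundary contribution and gives
\[
    -\sum_{\bm i\in\bm I}\sum_{k=1}^{d}\frac{F_{\bm i+\frac12\bm e_k}^{n+1}-F_{\bm i-\frac12\bm e_k}^{n+1}}{h_k}\Lambda'(\Rho_{\bm i}^{n+1})
    =
    \sum_{k=1}^{d}\sum_{\substack{\bm i\in\bm I\,:\,\bm i+\bm e_k\in\bm I}}F_{\bm i+\frac12\bm e_k}^{n+1}\,\frac{\Lambda'(\Rho_{\bm i+\bm e_k}^{n+1})-\Lambda'(\Rho_{\bm i}^{n+1})}{h_k}.
\]
It therefore suffices to prove the edgewise inequality
\[
    F_{\bm i+\frac12\bm e_k}^{n+1}\big(\Lambda'(\Rho_{\bm i+\bm e_k}^{n+1})-\Lambda'(\Rho_{\bm i}^{n+1})\big)
    \le
    \big(H'(\Rho_{\bm i+\bm e_k}^{n+1})-H'(\Rho_{\bm i}^{n+1})\big)\,v_{\bm i+\frac12\bm e_k}^{n+1}
\]
for each edge with $\bm i,\bm i+\bm e_k\in\bm I$; summing over $k$ and $\bm i$, dividing by $h_k$, chaining with the displays above, multiplying by $|Q^{(\bm h)}|$ and rearranging then gives exactly \eqref{eq:energy control with general H}.

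The edgewise inequality is the crux, and I expect its case analysis to be the main obstacle. Abbreviate $a=\Rho_{\bm i}^{n+1}$, $b=\Rho_{\bm i+\bm e_k}^{n+1}$, $v=v_{\bm i+\frac12\bm e_k}^{n+1}$, and write $F_{\bm i+\frac12\bm e_k}^{n+1}=\theta v$ with $\theta=\theta_{\bm i+\frac12\bm e_k}^{n+1}$ as in \eqref{eq:theta}, so $\theta=\mobup(a)\mobdown(b)$ if $v>0$ and $\theta=\mobup(b)\mobdown(a)$ if $v<0$ (and $F=0$ if $v=0$). Using $\Lambda'(b)-\Lambda'(a)=\int_a^b\frac{H''(s)}{\mob(s)}\,ds$ and $H'(b)-H'(a)=\int_a^b H''(s)\,ds$, the inequality reduces, after dividing by $v$ and tracking its sign (for $v=0$ both sides vanish), to
\[
    \operatorname{sgn}(v)\int_a^b H''(s)\Big(1-\frac{\theta}{\mob(s)}\Big)\,ds\ \ge\ 0.
\]

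To conclude, write $\mob(s)=\mobup(s)\mobdown(s)$, so that $\theta/\mob(s)$ is a ratio of $\mobup$-values times a ratio of $\mobdown$-values; using that $\mobup$ is non-decreasing and $\mobdown$ is non-increasing, one checks in each of the four cases (sign of $v$ against the ordering of $a$ and $b$) that the pointwise factor $1-\theta/\mob(s)$ has sign $\operatorname{sgn}\big((b-a)v\big)$ for every $s$ between $a$ and $b$. Since $H''\ge 0$, the oriented integral $\int_a^b$ then has sign $\operatorname{sgn}\big((b-a)v\big)\cdot\operatorname{sgn}(b-a)=\operatorname{sgn}(v)$ (it is $0$ when $a=b$), which is precisely the displayed inequality. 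Everything else is bookkeeping exactly as in \Cref{lem:scheme free-energy dissipation}; it may be worth remarking that for $\mob(s)=s$, $H=U$ and $V=K=0$ this recovers the classical $\Lambda_U$-entropy estimate, with $\Lambda'$ finite precisely under $H''/\mob\in L^1_{loc}$.
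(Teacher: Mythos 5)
Your proposal is correct and follows essentially the same route as the paper: test the scheme with $\Lambda'(\Rho^{n+1})$, use convexity of $\Lambda$, integrate by parts, and reduce to the edgewise inequality $F^{n+1}_{\bm i+\frac12\bm e_k}\big(\Lambda'(\Rho^{n+1}_{\bm i+\bm e_k})-\Lambda'(\Rho^{n+1}_{\bm i})\big)\le\big(H'(\Rho^{n+1}_{\bm i+\bm e_k})-H'(\Rho^{n+1}_{\bm i})\big)v^{n+1}_{\bm i+\frac12\bm e_k}$. The only difference is presentational: the paper verifies this edge inequality by introducing the entropic average $\mu(x,y)=\frac{H'(x)-H'(y)}{\Lambda'(x)-\Lambda'(y)}$ and sandwiching it between $\mobupwind(\min,\max)$ and $\mobupwind(\max,\min)$, whereas you compare the integrands $1-\theta/\mob(s)$ pointwise; both rest on the same monotonicity of $\mobup$ and $\mobdown$.
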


\begin{proof}
    We define the generalised entropic average
    \begin{equation*}
        \mu(x,y) = \begin{dcases}
            \frac{H'(x) - H'(y)}{\Lambda'(x) - \Lambda'(y)} & x \ne y , \\
            {\mob(x)}                                       & x = y.
        \end{dcases}
    \end{equation*}
    This function is continuous and symmetric, i.e, $\mu(x,y) = \mu(y,x)$.
    Furthermore, if $x \le y$ then, using the decomposition of $\mob$
    \begin{equation*}
        \Lambda'(y) - \Lambda'(x) = \int_{x}^y \frac{H''(s)}{\mob(s)} ds \le \frac{1}{\mobup(x)\mobdown(y)} \int_{x}^y H''(s) ds = \frac{1}{\mobupwind(x,y)} (H'(y) - H'(x))
    \end{equation*}
    Discussing similarly the different cases, one recovers for all $x, y \ge 0$ that
    \begin{equation}
        \label{eq:generalised entropy is between max and min}
        \mobupwind\Big(\min\{x,y\}, \max\{x,y\} \Big) \le \mu(x,y) \le \mobupwind\Big( \max\{x,y\}, \min\{x,y\} \Big)
    \end{equation}
    Since $\Lambda$ is convex we can estimate using \eqref{eq:convexity} and the no-flux condition
    \begin{align*}
        \frac{1}{\tau} \Bigg( \sum_{\bm i \in \bm I} \Lambda(\Rho_{\bm i}^{n+1})
         & -   \sum_{\bm i \in \bm I} \Lambda(\Rho_{\bm i}^{n})  \Bigg)
        \le   \sum_{\bm i \in \bm I} \Lambda'(\Rho_{\bm i}^{n+1}) \frac{\Rho^{n+1}_{\bm i} - \Rho^n_{\bm i}}{\tau}
        \\
         & = -  \sum_{\bm i \in \bm I} \sum_{k=1}^d \Lambda'(\Rho_{\bm i}^{n+1}) \frac {F^{n+1}_{\bm i + \frac 1 2 \bm e_{k}} - F^{n+1}_{\bm i - \frac 1 2 \bm e_k}}{h_k} \\
         & =
        \sum_{k=1}^d
        \sum_{\substack{\bm i \in \bm I \text{ s.t.}
        \\ \bm i + \bm e_k \in \bm I}}
        \frac{\Lambda'(\Rho_{\bm i + \bm e_k}^{n+1}) - \Lambda'(\Rho_{\bm i }^{n+1})}{h_k} F^{n+1}_{\bm i + \frac 1 2 \bm e_{k}}.
    \end{align*}
    We expand the last term using the up-winding and adding and subtracting terms
    \begin{align*}
         & \frac{\Lambda'(\Rho_{\bm i + \bm e_k}^{n+1}) - \Lambda'(\Rho_{\bm i}^{n+1})}{h_k} F^{n+1}_{\bm i + \frac 1 2 \bm e_{k}}
        \\
         & \quad =
        \frac{\Lambda'(\Rho_{\bm i + \bm e_k}^{n+1}) - \Lambda'(\Rho_{\bm i}^{n+1})}{h_k} \mu\Big(\Rho_{\bm i + \bm e_k}^{n+1},\Rho_{\bm i}^{n+1}\Big) v^{n+1}_{\bm i + \frac 1 2 \bm e_{k}}
        \\
         & \qquad + \frac{\Lambda'(\Rho_{\bm i + \bm e_k}^{n+1}) - \Lambda'(\Rho_{\bm i}^{n+1})}{h_k} \Bigg( \mobupwind(\Rho_{\bm i}^{n+1},\Rho_{\bm i + \bm e_k}^{n+1}) - \mu\Big(\Rho_{\bm i + \bm e_k}^{n+1},\Rho_{\bm i}^{n+1}\Big) \Bigg) \Big(v^{n+1}_{\bm i + \frac 1 2 \bm e_{k}}\Big)_+ \\
         & \qquad + \frac{\Lambda'(\Rho_{\bm i + \bm e_k}^{n+1}) - \Lambda'(\Rho_{\bm i}^{n+1})}{h_k} \Bigg( \mobupwind(\Rho_{\bm i+\bm e_k}^{n+1},\Rho_{\bm i}^{n+1}) - \mu\Big(\Rho_{\bm i + \bm e_k}^{n+1},\Rho_{\bm i}^{n+1}\Big) \Bigg) \Big(v^{n+1}_{\bm i + \frac 1 2 \bm e_{k}}\Big)_- .
    \end{align*}
    We again distinguish cases:
    \begin{enumeratecases}
        \case [$\Rho_{\bm i}^{n+1} \le \Rho_{\bm i + \bm e_k}^{n+1}$] Then, since $\Lambda'' \ge 0$ and \eqref{eq:generalised entropy is between max and min} we have that
        \begin{gather*}
            \frac{\Lambda'(\Rho_{\bm i + \bm e_k}^{n+1}) - \Lambda'(\Rho_{\bm i}^{n+1})}{h_k} \ge 0, \\
            \mobupwind\left(\Rho_{\bm i}^{n+1},\Rho_{\bm i + \bm e_k}^{n+1}\right) - \mu\left(\Rho_{\bm i + \bm e_k}^{n+1},\Rho_{\bm i}^{n+1}\right) \le 0
            \qquad  \mobupwind\left(\Rho_{\bm i+\bm e_k}^{n+1},\Rho_{\bm i}^{n+1}\right)  - \mu\left(\Rho_{\bm i + \bm e_k}^{n+1},\Rho_{\bm i}^{n+1}\right) \ge 0
        \end{gather*}
        Thus, using that $a_- \le 0 \le a_+$ we have that
        \begin{align*}
            \frac{\Lambda'(\Rho_{\bm i + \bm e_k}^{n+1}) - \Lambda'(\Rho_{\bm i}^{n+1})}{h_k} F^{n+1}_{\bm i + \frac 1 2 \bm e_{k}}
             & \le
            \frac{\Lambda'(\Rho_{\bm i + \bm e_k}^{n+1}) - \Lambda'(\Rho_{\bm i}^{n+1})}{h_k} \mu\left(\Rho_{\bm i + \bm e_k}^{n+1},\Rho_{\bm i}^{n+1}\right) v^{n+1}_{\bm i + \frac 1 2 \bm e_{k}} \\
             & =
            \frac{H'(\Rho_{\bm i + \bm e_k}^{n+1}) - H'(\Rho_{\bm i}^{n+1})}{h_k} v^{n+1}_{\bm i + \frac 1 2 \bm e_{k}}.
        \end{align*}

        \case[$\Rho_{\bm i + \bm e_k}^{n+1} > \Rho_{\bm i}^{n+1}$] Then, we have similarly that
        \begin{gather*}
            \frac{\Lambda'(\Rho_{\bm i + \bm e_k}^{n+1}) - \Lambda'(\Rho_{\bm i}^{n+1})}{h_k} \le 0, \\
            \mobupwind(\Rho_{\bm i}^{n+1},\Rho_{\bm i + \bm e_k}^{n+1}) - \mu\Big(\Rho_{\bm i + \bm e_k}^{n+1},\Rho_{\bm i}^{n+1}\Big) \ge 0,
            \qquad  \mobupwind(\Rho_{\bm i+\bm e_k}^{n+1},\Rho_{\bm i}^{n+1})  - \mu\Big(\Rho_{\bm i + \bm e_k}^{n+1},\Rho_{\bm i}^{n+1}\Big) \le 0,
        \end{gather*}
        and the same inequality holds.
    \end{enumeratecases}
    Eventually, we have shown that
    \begin{align*}
        \frac{1}{\tau} \Bigg( \sum_{\bm i \in \bm I} \Lambda(\Rho_{\bm i}^{n+1})
         & -  \sum_{\bm i \in \bm I} \Lambda(\Rho_{\bm i}^{n+1}) \Bigg)
        \le \sum_{\bm i \in \bm I} \sum_{k=1}^d \frac{H'(\Rho_{\bm i + \bm e_k}^{n+1}) - H'(\Rho_{\bm i}^{n+1})}{h_k} v^{n+1}_{\bm i + \frac 1 2 \bm e_{k}}.
    \end{align*}
    Adding by parts the result follows.
\end{proof}

\subsection{Spatial compactness}

We can now prove the following result
\begin{corollary}
    \label{cor:U'(rho) is H1x}
    Under the assumptions of \Cref{thm:convergence for data away from 0 mob Lipschitz} or \Cref{thm:convergence with free boundary} we have that
    \begin{equation*}
        \begin{aligned}
            \tau \sum_{n=0}^{N-1}
             & |Q^{(\bm h)}|
            \sum_{k=1}^d
            \sum_{\substack{\bm i \in \bm I \text{ s.t.} \\ \bm i + \bm e_k \in \bm I}}
            \left| \frac{U'(\Rho_{\bm i + \bm e_k}^{n+1}) - U'(\Rho_{\bm i}^{n+1})}{h_k} \right|^2
            \\
             & \le
            2 |Q^{(\bm h)}|\sum_{\bm i \in \bm I} \left(\Lambda_{U}(\Rho_{\bm i}^{0}) - \Lambda_{U}(\Rho_{\bm i}^{N})\right)
            +
            C(\Omega) \Big( \|\nabla V\|_{L^\infty} + \|\nabla K\|_{L^\infty} \|\rho_0\|_{L^1} \Big)  N \tau.
        \end{aligned}
    \end{equation*}
\end{corollary}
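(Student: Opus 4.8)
The plan is to feed $H=U$ into the entropy–dissipation inequality of \Cref{prop:Sobolev-type estimate}: since $\mob\,\Lambda_U''=U''$, the function $\Lambda$ produced by that lemma is, up to an affine term (which is immaterial because the scheme conserves mass), exactly $\Lambda_U$ from \eqref{eq:Lambda_U}. Under the hypotheses of \Cref{thm:convergence for data away from 0 mob Lipschitz}, \Cref{lem:free boundaries 2} confines every iterate $\Rho_{\bm i}^n$ with $0\le n\le N$ to a fixed compact subinterval $[\underline\Rho,\overline\Rho]\Subset(0,\alpha)$ on which $U\in C^2$ and $U''>0$, so \Cref{prop:Sobolev-type estimate} applies verbatim. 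Under the hypotheses of \Cref{thm:convergence with free boundary} the iterates may reach the saturation values $0$ and $\alpha$, where $U''$ need not be defined; there one first applies \Cref{prop:Sobolev-type estimate} to a convex $U_\varepsilon\in C^2([0,\alpha])$ obtained by freezing $U''$ near the endpoints, and then lets $\varepsilon\to0$, using $\inf_{(0,\alpha)}U''>0$ together with $U'\in C([0,\alpha])$ and $\Lambda_U\in C([0,\alpha])$ (hypothesis \ref{it:U C1 [0,alpha] and U'' > C > 0}) to pass to the limit on both sides. In either case one obtains, for every $n$,
\[
  \tau|Q^{(\bm h)}|\sum_{k=1}^d\sum_{\substack{\bm i\in\bm I\\ \bm i+\bm e_k\in\bm I}}\frac{U'(\Rho_{\bm i+\bm e_k}^{n+1})-U'(\Rho_{\bm i}^{n+1})}{h_k}\,v_{\bm i+\frac12\bm e_k}^{n+1}\ \ge\ |Q^{(\bm h)}|\sum_{\bm i\in\bm I}\Big(\Lambda_U(\Rho_{\bm i}^{n+1})-\Lambda_U(\Rho_{\bm i}^{n})\Big).
\]

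Next I would substitute the definitions of $v$ and $\xi$ in \eqref{eq:scheme F,v,xi}. Setting $G_{\bm i,k}^n\defeq\frac{V_{\bm i+\bm e_k}-V_{\bm i}}{h_k}+|Q^{(\bm h)}|\sum_{\bm j\in\bm I}\frac{K_{\bm i+\bm e_k,\bm j}-K_{\bm i,\bm j}}{h_k}\,\Rho_{\bm j}^{n+\frac12}$, one has $v_{\bm i+\frac12\bm e_k}^{n+1}=-\frac{U'(\Rho_{\bm i+\bm e_k}^{n+1})-U'(\Rho_{\bm i}^{n+1})}{h_k}-G_{\bm i,k}^n$, so the left-hand summand above equals $-\big|\tfrac{U'(\Rho_{\bm i+\bm e_k}^{n+1})-U'(\Rho_{\bm i}^{n+1})}{h_k}\big|^2-\tfrac{U'(\Rho_{\bm i+\bm e_k}^{n+1})-U'(\Rho_{\bm i}^{n+1})}{h_k}G_{\bm i,k}^n$. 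Moving the square term to the other side, applying Young's inequality $|ab|\le\frac12a^2+\frac12b^2$ to the remaining cross term so as to absorb one half of the square back, and multiplying by $2$ yields
\[
  \tau|Q^{(\bm h)}|\sum_{k=1}^d\sum_{\substack{\bm i\in\bm I\\ \bm i+\bm e_k\in\bm I}}\Big|\tfrac{U'(\Rho_{\bm i+\bm e_k}^{n+1})-U'(\Rho_{\bm i}^{n+1})}{h_k}\Big|^2\ \le\ 2|Q^{(\bm h)}|\sum_{\bm i\in\bm I}\Big(\Lambda_U(\Rho_{\bm i}^{n})-\Lambda_U(\Rho_{\bm i}^{n+1})\Big)+\tau|Q^{(\bm h)}|\sum_{k=1}^d\sum_{\substack{\bm i\in\bm I\\ \bm i+\bm e_k\in\bm I}}|G_{\bm i,k}^n|^2 .
\]

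Finally I would sum this over $n=0,\dots,N-1$. The $\Lambda_U$-terms telescope to $2|Q^{(\bm h)}|\sum_{\bm i}(\Lambda_U(\Rho_{\bm i}^0)-\Lambda_U(\Rho_{\bm i}^N))$, which is the first term in the claim. For the $G$-sum I would use that the segment joining the adjacent cell centres $\bm x_{\bm i}^{(\bm h)}$ and $\bm x_{\bm i+\bm e_k}^{(\bm h)}$ is contained in $\overline{Q_{\bm i}^{(\bm h)}}\cup\overline{Q_{\bm i+\bm e_k}^{(\bm h)}}\subset\Omega$, so the Lipschitz estimates $|\frac{V_{\bm i+\bm e_k}-V_{\bm i}}{h_k}|\le\|\nabla V\|_{L^\infty}$ and $|\frac{K_{\bm i+\bm e_k,\bm j}-K_{\bm i,\bm j}}{h_k}|\le\|\nabla_x K\|_{L^\infty}$ hold, together with mass conservation $|Q^{(\bm h)}|\sum_{\bm j\in\bm I}\Rho_{\bm j}^{n+\frac12}=|Q^{(\bm h)}|\sum_{\bm j\in\bm I}\Rho_{\bm j}^0\le\|\rho_0\|_{L^1}$, which follows from summing \eqref{eq:scheme equation} over $\bm i\in\bm I$ and using the no-flux convention in \eqref{eq:scheme F,v,xi}; hence $|G_{\bm i,k}^n|\le\|\nabla V\|_{L^\infty}+\|\nabla_x K\|_{L^\infty}\|\rho_0\|_{L^1}$. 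Since there are at most $d|\bm I|$ admissible pairs $(\bm i,k)$ and $|Q^{(\bm h)}||\bm I|\le|\Omega|$, the $G$-sum is bounded by $d|\Omega|\big(\|\nabla V\|_{L^\infty}+\|\nabla_x K\|_{L^\infty}\|\rho_0\|_{L^1}\big)^2 N\tau$, which gives the asserted remainder (with $C(\Omega)=d|\Omega|$, the constant absorbing the extra power of the bounded potential term if one insists on the first-power form written in the statement).

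The step I expect to be the genuine obstacle is the very first one: invoking \Cref{prop:Sobolev-type estimate} with $H=U$ under the hypotheses of \Cref{thm:convergence with free boundary}, where $U''$ may degenerate or blow up at the saturation values and the iterates are not a priori bounded away from them. One must check that the truncation-and-limit procedure does not destroy the inequality, which is precisely where $\inf_{(0,\alpha)}U''>0$ and $\Lambda_U\in C([0,\alpha])$ are used. Everything after that is routine: insertion of the scheme into the dissipation inequality, one application of Young's inequality, a telescoping sum, and the elementary Lipschitz and mass bounds.
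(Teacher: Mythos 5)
Your proposal is correct and follows essentially the same route as the paper: apply \Cref{prop:Sobolev-type estimate} with $H$ a regularised version of $U$, pass to the limit in the regularisation parameter, substitute the definition of $v_{\bm i+\frac12\bm e_k}^{n+1}$, absorb the cross term by Young's inequality, telescope in $n$, and bound the drift by the Lipschitz constants of $V$, $K$ together with discrete mass conservation. The only (immaterial) differences are the choice of regularisation --- the paper uses $U_\varepsilon''=\tfrac{\mob}{\mob+\varepsilon}U''$ in both regimes rather than applying the proposition directly under the hypotheses of \Cref{thm:convergence for data away from 0 mob Lipschitz} and freezing $U''$ near the endpoints otherwise --- and your explicit observation, which the paper leaves implicit, about the first-power form of the potential remainder.
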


\begin{proof}
    Let us define $U_\varepsilon$ by the conditions
    \begin{equation*}
        U_\varepsilon'' = \frac{\mob}{\mob + \varepsilon} U'',
        \qquad U_\varepsilon(\tfrac{\alpha}2) = U(\tfrac \alpha 2),
        \qquad U_\varepsilon'(\tfrac{\alpha}2) = U'(\tfrac \alpha 2).
    \end{equation*}
    Let $\Lambda_{U_\varepsilon}$ be defined by analogy to \eqref{eq:Lambda_U}. In the assumptions of \Cref{thm:convergence for data away from 0 mob Lipschitz} or \Cref{thm:convergence with free boundary} we can apply \Cref{prop:Sobolev-type estimate}.

    First, we apply \eqref{eq:energy control with general H} iteratively to recover that
    \begin{align*}
         & -\tau \sum_{n=0}^{N-1} |Q^{(\bm h)}|
        \sum_{k=1}^d
        \sum_{\substack{\bm i \in \bm I \text{ s.t.}
        \\ \bm i + \bm e_k \in \bm I}}
        \left( \frac{U_\varepsilon'(\Rho_{\bm i + \bm e_k}^{n+1}) - U_\varepsilon'(\Rho_{\bm i}^{n+1})}{h_k} \right) v_{\bm i + \bm e_k}^{n+1}
        \le
        |Q^{(\bm h)}| \sum_{\bm i \in \bm I} \left(\Lambda_{U_\varepsilon}(\Rho_{\bm i}^{0}) - \Lambda_{U_\varepsilon}(\Rho_{\bm i}^{N})\right)
    \end{align*}
    As $\varepsilon \to 0$, in the assumption of \Cref{thm:convergence for data away from 0 mob Lipschitz} or \Cref{thm:convergence with free boundary} we have that $U_\varepsilon' \to U'$ and $\Lambda_{U_\varepsilon} \to \Lambda_U$ point-wise in the support of $\bm \Rho$.
    Therefore,
    \begin{align*}
         & -\tau \sum_{n=0}^{N-1} |Q^{(\bm h)}|
        \sum_{k=1}^d
        \sum_{\substack{\bm i \in \bm I \text{ s.t.}
        \\ \bm i + \bm e_k \in \bm I}}
        \left( \frac{U'(\Rho_{\bm i + \bm e_k}^{n+1}) - U'(\Rho_{\bm i}^{n+1})}{h_k} \right) v_{\bm i + \bm e_k}^{n+1}
        \le
        |Q^{(\bm h)}| \sum_{\bm i \in \bm I} \left(\Lambda_{U}(\Rho_{\bm i}^{0}) - \Lambda_{U}(\Rho_{\bm i}^{N})\right).
    \end{align*}
    Going back to the definition of the velocity \eqref{eq:scheme F,v,xi} we have that
    \begin{align*}
         &
        -\frac{U'(\Rho_{\bm i + \bm e_k}^{n+1}) - U'(\Rho_{\bm i}^{n+1})}{h_k}
        v_{\bm i + \tfrac 1 2 \bm e_k}^{n+1}
        \\
         & \qquad  = \left( \frac{U'(\Rho_{\bm i + \bm e_k}^{n+1}) - U'(\Rho_{\bm i}^{n+1})}{h_k}  \right)^2
        \\
         & \qquad \quad
        - \frac{U'(\Rho_{\bm i + \bm e_k}^{n+1}) - U'(\Rho_{\bm i}^{n+1})}{h_k}
        \left(
        \frac{V_{\bm i+\bm e_k} - V_{\bm i}}{h_k}
        + \sum_{\bm j} \frac{K_{\bm i + \bm e_k, \bm j} - K_{\bm i, \bm j}}{h_k} \Rho_{\bm j}^{n+\tfrac 1 2}
        \right)                                                                                                          \\
         & \qquad  \ge \frac 1 2 \left( \frac{U'(\Rho_{\bm i + \bm e_k}^{n+1}) - U'(\Rho_{\bm i}^{n+1})}{h_k}  \right)^2
        \\
         & \qquad \quad
        - \frac 1 2
        \left|
        \frac{V_{\bm i+\bm e_k} - V_{\bm i}}{h_k}
        + \sum_{\bm j} \frac{K_{\bm i + \bm e_k, \bm j} - K_{\bm i, \bm j}}{h_k} \Rho_{\bm j}^{n+\tfrac 1 2}
        \right|.
    \end{align*}
    We can now sum in $\bm i$ and $k$ to recover the result.
\end{proof}

\section{Proof of the convergence results}
\label{sec:proof of convergence}
\subsection{Proof of \texorpdfstring{\Cref{thm:convergence for data away from 0 mob Lipschitz}}{Theorem \ref{thm:convergence for data away from 0 mob Lipschitz}}}
In the proof we will avoid the index $\seqi$ in some elements, for the sake of readability.
\begin{enumeratesteps}
    \step[Upper and lower bounds.]
    \label{step:proof convergence data away from 0 - upper/lower bound}
    Let us denote $L = \|\mob'\|_{L^\infty}$.
    We consider
    \begin{equation*}
        \underline \Rho_\seqi^n =  (1 + 2\tau_\seqi \lambda d L)^{-n} \essinf_\Omega \rho_0
        \qquad
        \overline \Rho_\seqi^n = \alpha - (1 + 2\tau_\seqi \lambda d L)^{-n} \Big(\alpha - \esssup_\Omega \rho_0\Big)
    \end{equation*}
    It is a simple computation that
    \begin{equation*}
        \underline \Rho_\seqi = \inf \{ \underline \Rho_\seqi^n : n \tau_\seqi \le T \} > 0.
    \end{equation*}
    We similarly construct $\overline \Rho_\seqi < \alpha$.
    Due to \Cref{lem:free boundaries 2}, we prove recursively that $\underline \Rho_\seqi \le \rho_\seqi \le \overline \Rho_\seqi$.
    Since each element is positive, and the limit is positive $\underline \Rho = \inf_\seqi \underline \Rho_\seqi > 0$. Similarly, we build $\overline \Rho$.

    \step[Bound on $X_\seqi$]
    \label{step:proof convergence data away from 0 - bound Xj}
    We define $\mu = \min_{[\underline \Rho, \overline \Rho]} U''$. Then, using the Taylor expansion we have that
    \begin{equation*}
        \left| \frac{\Rho_{\bm i + \bm e_k}^{n+1} - \Rho_{\bm i}^{n+1}}{h_k} \right|^2 \le \frac 1 \mu \left| \frac{U'(\Rho_{\bm i + \bm e_k}^{n+1}) - U'(\Rho_{\bm i}^{n+1})}{h_k} \right|^2
    \end{equation*}
    Since $\underline \Rho \le \rho_\seqi \le \overline \Rho$ uniformly for all $\seqi$, then in \Cref{cor:U'(rho) is H1x} the right-hand side is bounded uniformly.

    \step[Bound on $Y_\seqi$]
    This is precisely \Cref{lem:time continuity}.

    \step[Convergence of $\rho$] We can apply \Cref{thm:gallouet,lem:gallouet in our setting} to show that, up to a subsequence, $\rho_\seqi$ converges to a function $\rho$ in $L^2( (0,T) \times \Omega)$. A further subsequence converges a.e.\ in $(0,T) \times \Omega$. Hence \ref{it:upper lower bound} holds.

    \step[Limit of $\xi$]
    Consider the function
    \begin{equation*}
        \xi_\seqi (t, x) =
        \begin{cases}
            \xi_{\bm i}^{n+1} & \text{if } n\tau \le t < (n+1)\tau \text{ and } x \in Q^{(\bm h_\seqi)} (\bm i) \\
            0                 & \text{otherwise.}
        \end{cases}
    \end{equation*}
    Due to the a.e.\ convergence, the uniform upper and lower bounds, the fact that $U\in C^2((0,\alpha))$, and using that $K$ is Lipschitz, we recover that, up to a subsequence, $\xi_\seqi$ converges in $L^2((0,T) \times \Omega)$ to $\xi$ satisfying \eqref{eq:xi}.

    \step[Limit of the velocities $v$]
    We consider
    \begin{align*}
        v_{\seqi,k} (t, x) & =
        \begin{cases}
            v_{\bm i + \frac 1 2 \bm e_k}^{n+1}
             & \text{if } n\tau \le t < (n+1)\tau \text{ and } x \in Q^{(\bm h_\seqi)} (\bm i)
            \\
            0
             & \text{otherwise.}
        \end{cases}
    \end{align*}
    We decompose, avoiding the index $\seqi$ for simplicity
    \begin{equation*}
        v_{\bm i + \frac 1 2 \bm e_k}^{n+1}
        = -\frac{U'(\Rho_{\bm i + \bm e_k}^{n+1}) - U'(\Rho_{\bm i}^{n+1})}{h_k}
        - \frac{V_{\bm i + \bm e_k} - V_{\bm i}}{h_k}
        - {h_k} \sum_{\bm j } \frac{K_{\bm i + \bm e_k, \bm j} - K_{\bm i, \bm j}}{h_k}  \Rho_{\bm j}^{n+\frac 1 2}.
    \end{equation*}
    \Cref{cor:U'(rho) is H1x} shows that the first terms in $L^2$ bounded, the second and third terms are in $L^\infty$ since $\nabla V$ and $\nabla_x K$ are bounded.
    Therefore, up to a further subsequence $(v_{\seqi,k})_+$ and $(v_{\seqi,k})_-$ converge to limits, $v_{+,k}, v_{-,k}$, weakly in $L^2 ((0,T) \times \Omega)$.
    Using test functions it is easy to see that $v_{+,k} + v_{-,k} = -\frac{\partial \xi}{\partial x_k}$.
    \step[Limit of the mobility]
    Let us fix a direction $k \in \{1,\cdots, d\}$.
    Consider $\tilde \rho_\seqi$ extended by zero outside of $\Omega$.
    The subsequence we are studying strongly in $L^2((0,T) \times \Rd)$.
    The Fréchet--Kolmogorov theorem ensures that
    \begin{equation*}
        \omega_k(\sigma) \defeq \sup_{\seqi} \| \tilde \rho_\seqi(\cdot + \sigma \bm e_k) - \tilde \rho_\seqi \|_{L^2((0,T) \times \Rd)}
    \end{equation*}
    is a modulus of continuity.

    Let $\omega \Subset \Omega$. For $h_\seqi$ small enough the piece-wise constant interpolation of $\mobupwind(\bm P_{\bm i}^{n+1}, \bm P_{\bm i + \bm e_k}^{n+1} )$ given by
    \begin{align*}
        f_{\seqi,k} (t, x) & =
        \begin{cases}
            \mobupwind(\Rho_{\bm i}^{n+1}, \Rho_{\bm i + \bm e_k}^{n+1})
             & \text{if } n\tau \le t < (n+1)\tau \text{ and } x \in Q^{(\bm h_\seqi)} (\bm i), \bm i + \bm e_k \in \bm I_{\bm h_\seqi} (\Omega) (\Omega)
            \\
            0
             & \text{otherwise.}
        \end{cases}
    \end{align*}
    is precisely $\mobupwind(\rho_\seqi, \rho_\seqi(\cdot + h_k \bm e_k))$.
    Due to the modulus of continuity deduced above, and the continuity of $\mob$ we deduce that $f_{\seqi, k}$ convergence strongly in $L^2((0,T) \times \omega)$
    to $\mob(\rho)$.
    Since this holds for any $\omega \Subset \Omega$ and the sequence is bounded in $L^\infty ((0,T) \times \Omega)$, the convergence holds in $L^2((0,T) \times \Omega)$.
    Similarly for
    \begin{equation*}
        g_{\seqi,k} (t, x) =
        \begin{cases}
            \mobupwind(\Rho_{\bm i + \bm e_k}^{n+1}, \Rho_{\bm i}^{n+1})
             & \text{if } n\tau \le t < (n+1)\tau \text{ and } x \in Q^{(\bm h_\seqi)} (\bm i), \bm i + \bm e_k \in \bm I_{\bm h_\seqi} (\Omega) (\Omega)
            \\
            0
             & \text{otherwise.}
        \end{cases}
    \end{equation*}

    \step[Limit of the fluxes $F$]
    We define
    \begin{align*}
        F_{\seqi,k} (t, x) & =
        \begin{cases}
            F_{\bm i + \frac 1 2 \bm e_k}^{n+1} & \text{if } n\tau \le t < (n+1)\tau \text{ and } x \in Q^{(\bm h_\seqi)} (\bm i) \\
            0                                   & \text{otherwise.}
        \end{cases}
    \end{align*}
    Hence, due to the up-winding and the weak converges of $(v_{\seqi,k})_{\pm}$ we have that
    \begin{equation*}
        F_{\seqi,k} = f_{\seqi, k} (v_{\seqi, k})_+ + g_{\seqi, k} (v_{\seqi, k})_- \rightharpoonup \mob (\rho) v_{+,k} + \mob(\rho) v_{-,k} = - \mob(\rho) \frac{\partial \xi}{\partial x_k}, \qquad \text{weakly in } L^1 ((0,T) \times \Omega).
    \end{equation*}

    \step[The limit is a weak solution]
    Let $\varphi \in C^\infty_c([0,T) \times \overline \Omega)$ and $N_\seqi$ such that $|N_\seqi \tau_\seqi - T| < 2 \tau_\seqi$.
    Let us define
    \begin{equation*}
        \varPhi_{\bm i}^{n+1} \defeq \varphi(t_{n+1},x_{\bm i}^{(\bm h_\seqi)})
    \end{equation*}
    Multiply each equation in \eqref{eq:scheme} by these values and add to recover
    \begin{align*}
        \tau \sum_{n=0}^{N_\seqi-1} \sum_{\bm i \in \bm I_{\bm h_\seqi} (\Omega) } \frac{\Rho_{\bm i}^{n+1}- \Rho_{\bm i}^{n}}{\tau} \varPhi_{\bm i}^{n+1}
        + \tau \sum_{n=0}^{N_\seqi-1} \sum_{k = 1}^d \sum_{\bm i \in \bm I_{\bm h_\seqi} (\Omega) } \frac{F_{\bm i + \frac 1 2 \bm e_k}^{n+1} - F_{\bm i - \frac 1 2 \bm e_k}^{n+1}}{h_k} \varPhi_{\bm i}^{n+1} =0.
    \end{align*}
    For $\seqi$ large enough $\Phi_{\bm i}^{N} = 0$ and
    exchanging the sums we recover
    \begin{align*}
         & - \tau |Q_{h_\seqi}|\sum_{n=0}^{N_\seqi-1} \sum_{\bm i \in \bm I_{\bm h_\seqi} (\Omega)} \Rho_{\bm i}^{n}\frac{\varPhi_{\bm i}^{n+1}- \varPhi_{\bm i}^{n}}{\tau}
        \\
         & \qquad - \tau |Q_{h_\seqi}|\sum_{n=0}^{N_\seqi-1}  \sum_{k = 1}^d  \sum_{\substack{\bm i \in \bm I_{\bm h_\seqi} (\Omega) \text{ s.t.}                           \\\bm i + \bm e_k \in \bm I_{\bm h_\seqi} (\Omega)}}
        F_{\bm i + \frac 1 2 \bm e_k}^{n+1} \frac{ \varPhi_{\bm i + \bm e_k}^{n+1} - \varPhi_{\bm i}^{n+1}}{h_k}
        = |Q^{(\bm h_\seqi)}|\sum_{\bm I_{\bm h_\seqi} (\Omega)}{\Rho_{\bm i}^{0} } \varPhi_{\bm i}^{1} .
    \end{align*}
    For $\tau_\seqi$ small enough $\varphi(t_{N_\seqi}, \cdot) = 0$, therefore
    \begin{align*}
        |Q_{h_\seqi}|\sum_{\bm i \in \bm I_{\bm h_\seqi} (\Omega)}\Rho_{\bm i}^{N_\seqi}\varPhi_{\bm i}^{N_\seqi} = 0.
    \end{align*}
    We have that
    \begin{align*}
        \tau |Q_{h_\seqi}|\sum_{n=0}^{N_\seqi-1} \sum_{\bm i \in \bm I_{\bm h_\seqi} (\Omega)} \Rho_{\bm i}^{n}\frac{\varPhi_{\bm i}^{n+1}- \varPhi_{\bm i}^{n}}{\tau}
         & = \sum_{n=0}^{N_\seqi-1} \sum_{\bm i \in \bm I_{\bm h_\seqi} (\Omega)} \Rho_{\bm i}^{n} \int_{t_n}^{t_{n+1}} \int_{Q^{(\bm h_\seqi)}(\bm i) }\frac{\partial  \varphi}{\partial t} (t, x) dt dx + R_1
        \\
         & = \int_0^{T} \int_{\Omega_{\bm h_\seqi} } \rho_\seqi (t,x) \frac{\partial  \varphi}{\partial t} (t,x) dt dx
        + R_1 + R_2
        \\
         & = \int_0^{T} \int_{\Omega} \rho_\seqi (t,x) \frac{\partial \varphi}{\partial t} (t,x) dt dx
        + R_1 + R_2,
    \end{align*}
    where we can estimate the reminders
    \begin{align*}
        |R_1|
              & \le C \tau N_\seqi |\Omega_{\bm h_\seqi}| \|\bm P\|_{\ell^\infty} \left( \left\| \nabla_x  \frac{\partial \varphi}{\partial t} \right\|_{L^\infty} \| \bm h_\seqi \|_{\infty} + \left\| \frac{\partial^2 \varphi}{\partial t^2} \right\|_{L^\infty} \tau_\seqi \right) \\
        |R_2| & \le C |T - \tau N_\seqi| |\Omega_{\bm h_\seqi}| \|\bm P\|_{\ell^\infty} \left\| \frac{\partial \varphi}{\partial t} \right\|_{L^\infty}
    \end{align*}
    and $\Omega_{\bm h_\seqi} \subset \Omega$ and $\rho_\seqi = 0$ in $[0,T] \times (\Omega \setminus \Omega_{\bm h_\seqi})$.
    Similarly, we have that
    \begin{align*}
         & \tau |Q_{h_\seqi}|\sum_{n=0}^{N_\seqi-1}
        \sum_{k = 1}^d  \sum_{\substack{\bm i \in \bm I_{\bm h_\seqi} (\Omega) \text{ s.t.} \\\bm i + \bm e_k \in \bm I_{\bm h_\seqi} (\Omega)}}
        F_{\bm i + \frac 1 2 \bm e_k}^{n+1} \frac{ \varPhi_{\bm i + \bm e_k}^{n+1} - \varPhi_{\bm i}^{n+1}}{h_k}
        = \int_0^T \int_\Omega \sum_{k=1}^d F_{\seqi,k} \frac{\partial \varphi}{\partial x_k} dt dx + R_4,
    \end{align*}
    with a similarly bound reminder term.
    By the convergences proven above we deduce that
    \begin{equation*}
        \int_0^T \int_\Omega \left(-\rho\frac{\partial \varphi}{\partial t} + \mob(\rho) \nabla \xi \cdot \nabla \varphi  \right) = \int_\Omega \rho_0(x) \varphi(0,x) dx
        \qquad \forall \varphi \in C_c^\infty([0,T) \times \overline \Omega).
        \qedhere
    \end{equation*}
    This concludes the proof. \qed
\end{enumeratesteps}

\subsection{Proof of \texorpdfstring{\Cref{thm:convergence with free boundary}}{Theorem \ref{thm:convergence with free boundary}}}

Since we only assume \ref{it:rho0 between 0 and alpha}, we can no longer use \cref{step:proof convergence data away from 0 - upper/lower bound} and free boundaries may form.
Hence, we have to adapt the proof of \cref{step:proof convergence data away from 0 - bound Xj} by pointing out that
\begin{equation*}
    \left| \frac{\Rho_{\bm i + \bm e_k}^{n+1} - \Rho_{\bm i}^{n+1}}{h_k} \right|^2 \le \frac 1 {\inf_{s \in (0,\alpha)} U''(s) } \left| \frac{U'(\Rho_{\bm i + \bm e_k}^{n+1}) - U'(\Rho_{\bm i}^{n+1})}{h_k} \right|^2
\end{equation*}
To recover the uniform bound in \Cref{cor:U'(rho) is H1x} we have assumed that $\Lambda_U \in C([0,\alpha])$.
In order to pass to the limit in $\xi$ we use the assumption $U' \in C([0,\alpha])$.
The remaining steps apply directly to this setting, since they do not use any of the sharper hypothesis.
\qed

\section{Numerical examples}
\label{sec:numerical examples}
This article is mainly concerned with the proof convergence of the scheme. For the performance of the scheme in the (sufficiently general) case $\mob(\rho) = \rho \mobdown(\rho)$ we point the reader to \cite{BailoCarrilloHu2023}. For a phenomenological study we point the reader to \cite{BurgerInzunzaMuletVillada2019}.
In this section we fix $\bm h = (h, \cdots, h)$, $\tau = \tau(h)$, and we will denote \eqref{eq:rho seqi} by $\rho_h$.

\bigskip

We provide some numerical experiments.
The numerical schemes were developed in \texttt{julia} and are available at
\begin{quote}
  \url{https://github.com/dgomezcastro/FVADE.jl}
\end{quote}
The implementation works for saturation and non-saturation cases (i.e., when $\mobdown$ does not vanish).
The implicit equations are solved with Newton-Raphson's method where the derivative is obtained using \texttt{ForwardDiff.jl}. This library uses dual numbers and stably differentiates Lipschitz functions.
For example, the convention is that $\frac{d}{dx}|_{x=0} \max\{x,0\} =0$.
When $K = 0$ the condition $\tau = h^2$ is usually sufficient for Newton's method to converge in 10 iterations or fewer. When $K \ne 0$ we need smaller $\tau$ to make the fixed-point iterations to converge.

In the implementation we replace $I_{\bm h}$ by $\tilde I_{\bm h}$ made of $\bm i \in \mathbb Z^d$ with $x_{\bm i}^{(\bm h)} \in \Omega$. This does not affect the convergence results when $\partial \Omega$ is smooth, since we can extend our test functions as $C^2$ functions in a neighbourhood of $\Omega$.

\subsection{Steady states of the drift-diffusion equation in dimension \texorpdfstring{$d=2$}{d=2}}

In order to show the behaviour in two different domains $\Omega$, we provide examples in $d = 2$ we provide two examples with diffusion and confinement in \Cref{fig:steady states}.
\begin{figure}[H]
  \centering
  \includegraphics[width=0.45\textwidth]{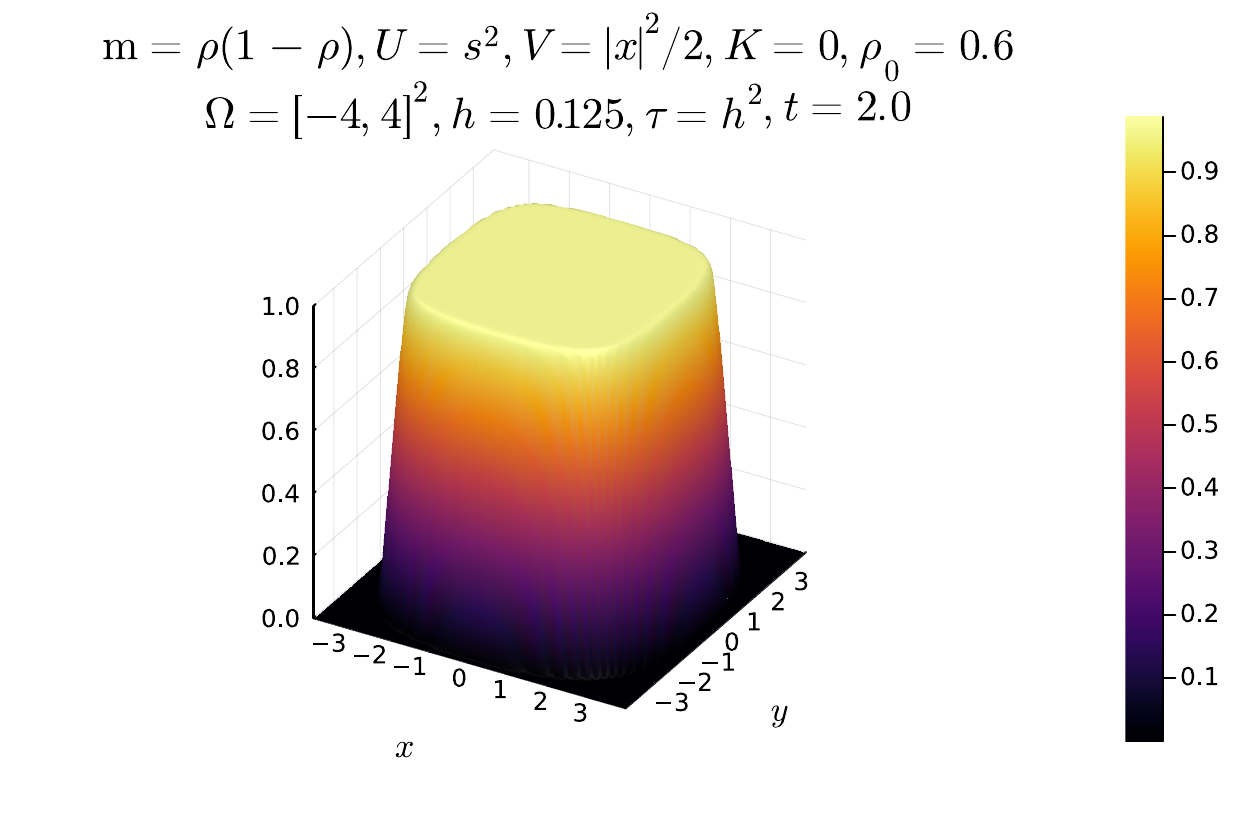}
  \includegraphics[width=0.45\textwidth]{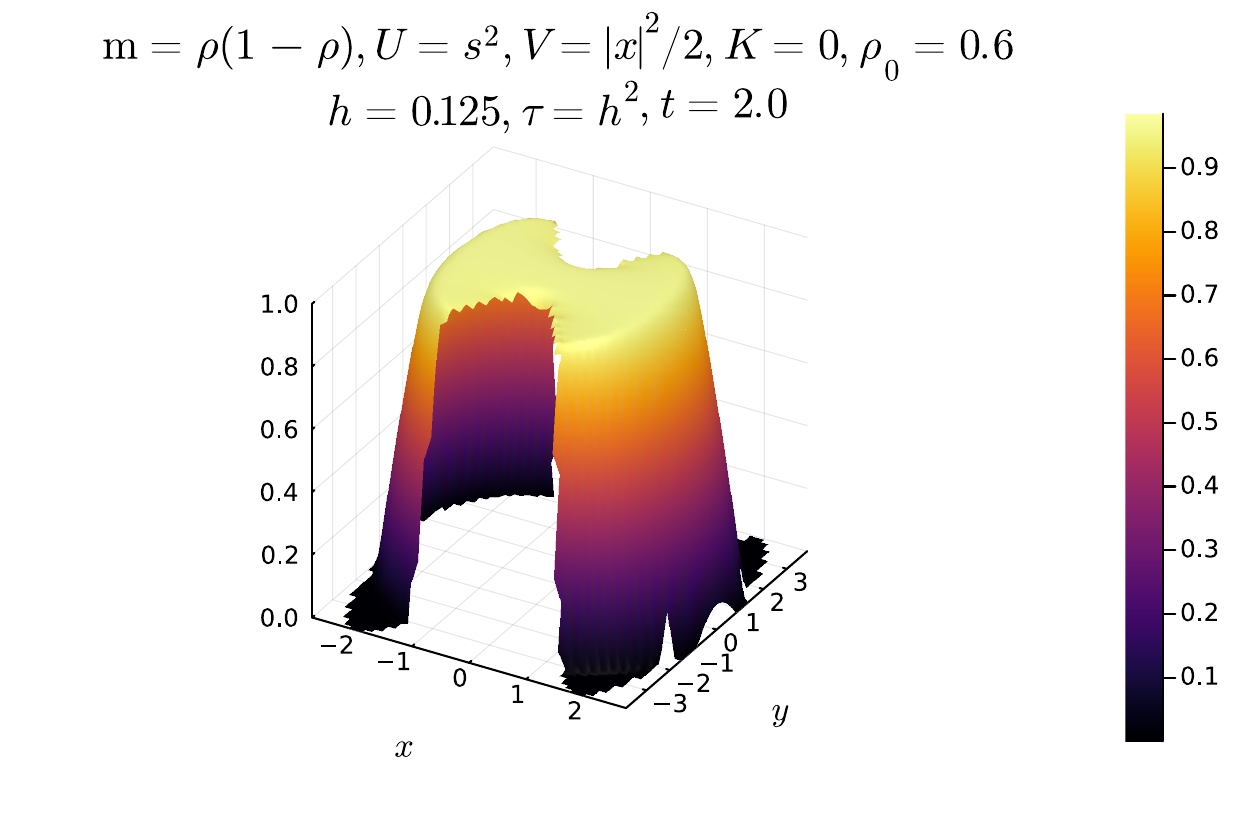}
  \caption{Steady states in the domains $\Omega = [-4,4]^2$ and $\Omega = \{(x,y) : (x^2 - 3.9)^2 + y^2 < 4^2\}$ starting from $\rho_0(x) = 0.6$.}
  \label{fig:steady states}
\end{figure}

\subsection{Decay of the free energy in an example with aggregation}

Lastly, we provide an example with aggregation to show the decay of the free energy in \Cref{fig:energy decay}. In this example we take $K \ne 0$.
\begin{figure}[H]
  \centering
  \includegraphics[width=0.45\textwidth]{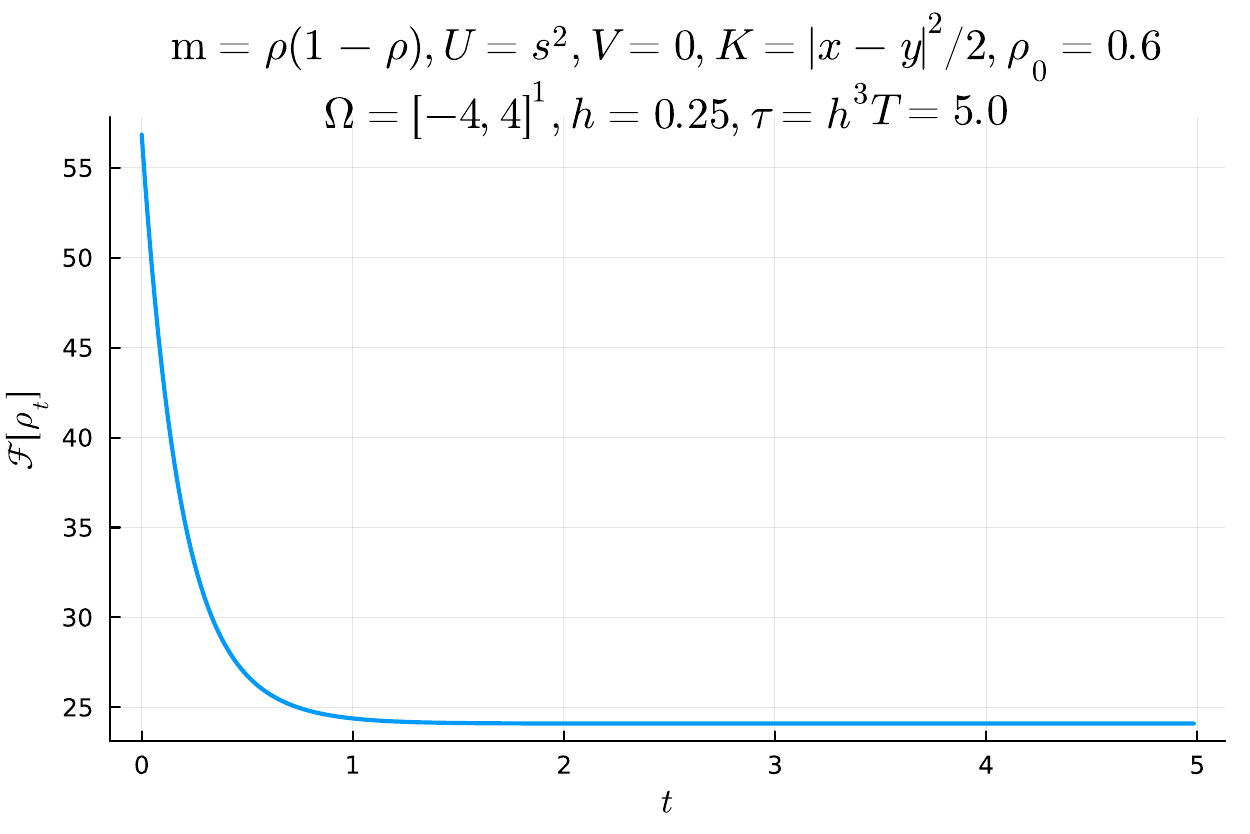}
  \includegraphics[width=0.45\textwidth]{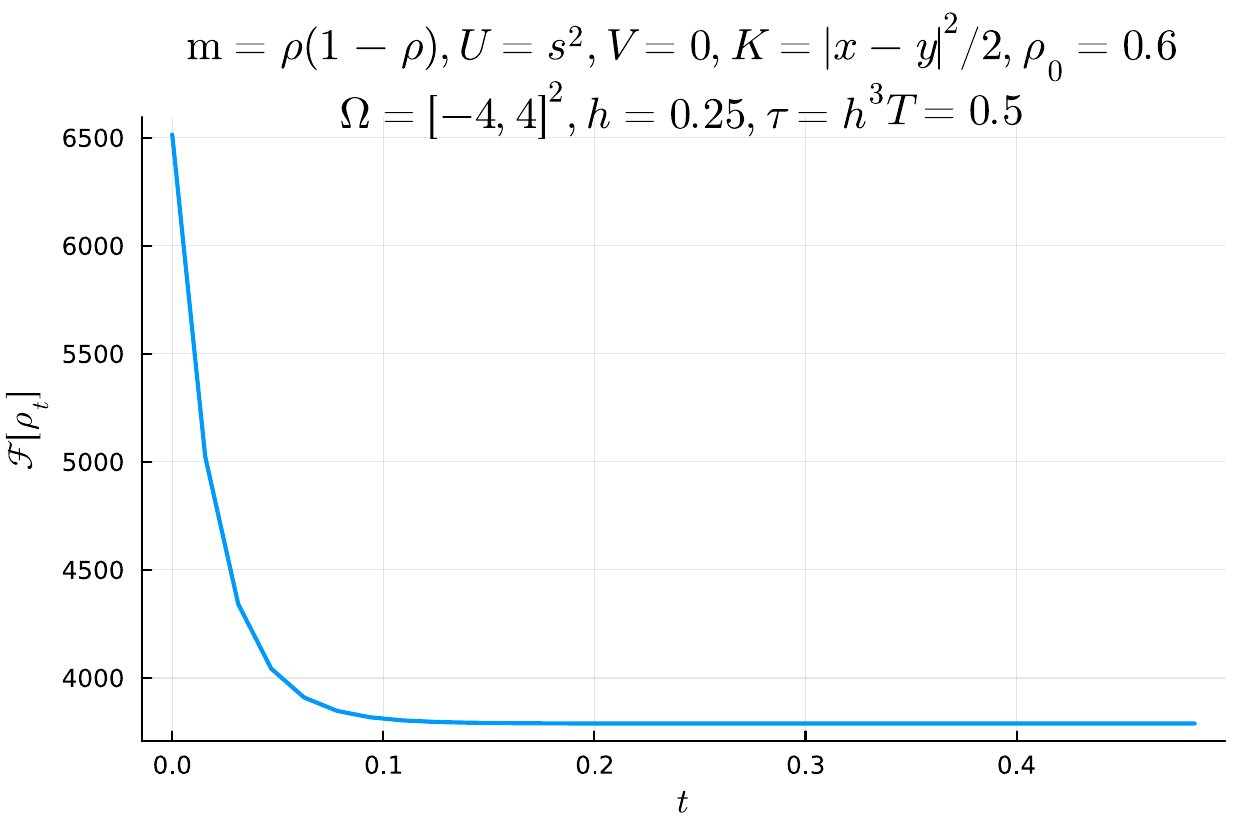}
  \caption{Decay of the free energy for $\Omega = [-4,4]$ and $\Omega = [-4,4]^2$.}
  \label{fig:energy decay}
\end{figure}

\subsection{Convergence as \texorpdfstring{$h \to 0$}{h→0} for the Porous-Medium Equation}
We have will show the convergence to zero of the values
\begin{equation*}
  e_h \defeq \| \rho_h - \rho \|_{L^1((0,T) \times \Omega)}
\end{equation*}
The only example where we have an explicit solution of \eqref{eq:main problem} that is not stationary is the Porous-Medium Equation: linear mobility $\mob(\rho) = \rho$ and  $U(\rho) = \frac{\rho^m}{m-1}, V = K = 0$.
This is not a case with saturation, but we will nevertheless use it as a benchmark.
Notice that, even though $\rho$ and $\rho_h$ are explicit, it is not easy to compute $e_h$ exactly.
Since $\rho_h$ is piece-wise constant, and $\rho$ is Hölder continuous we will use the approximation
\begin{equation*}
  \varepsilon^{(1)}_h \defeq \tau h^d \sum_{n=0}^N \sum_{\bm i} |\rho_h(\tau n, h \bm i) - \rho(\tau n, h \bm i)|.
\end{equation*}
The convergence data is shown in \Cref{fig:numerics Barenblatt}.
\begin{figure}[H]
  \centering
  \includegraphics[width=0.45\textwidth]{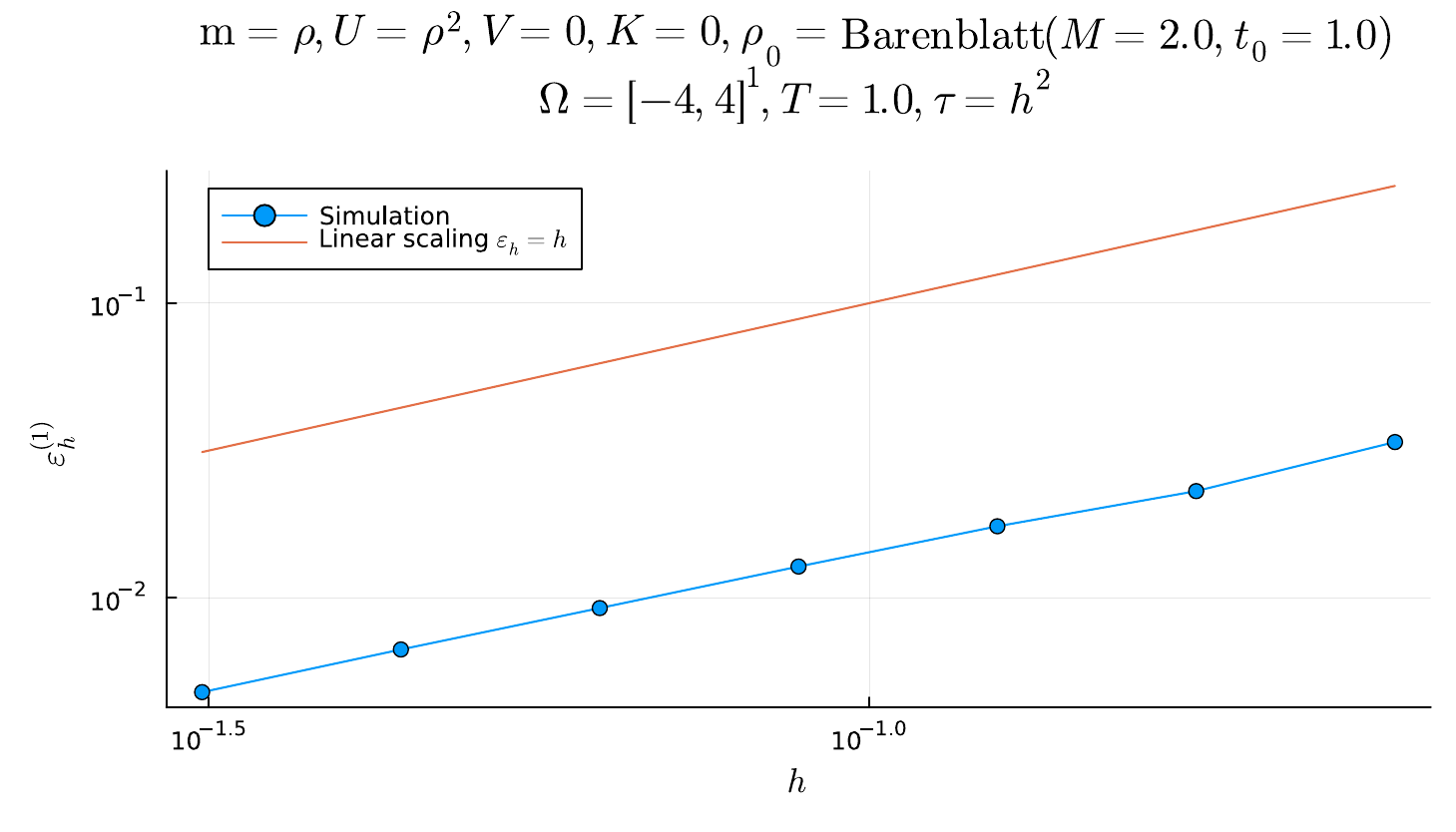}
  \includegraphics[width=0.45\textwidth]{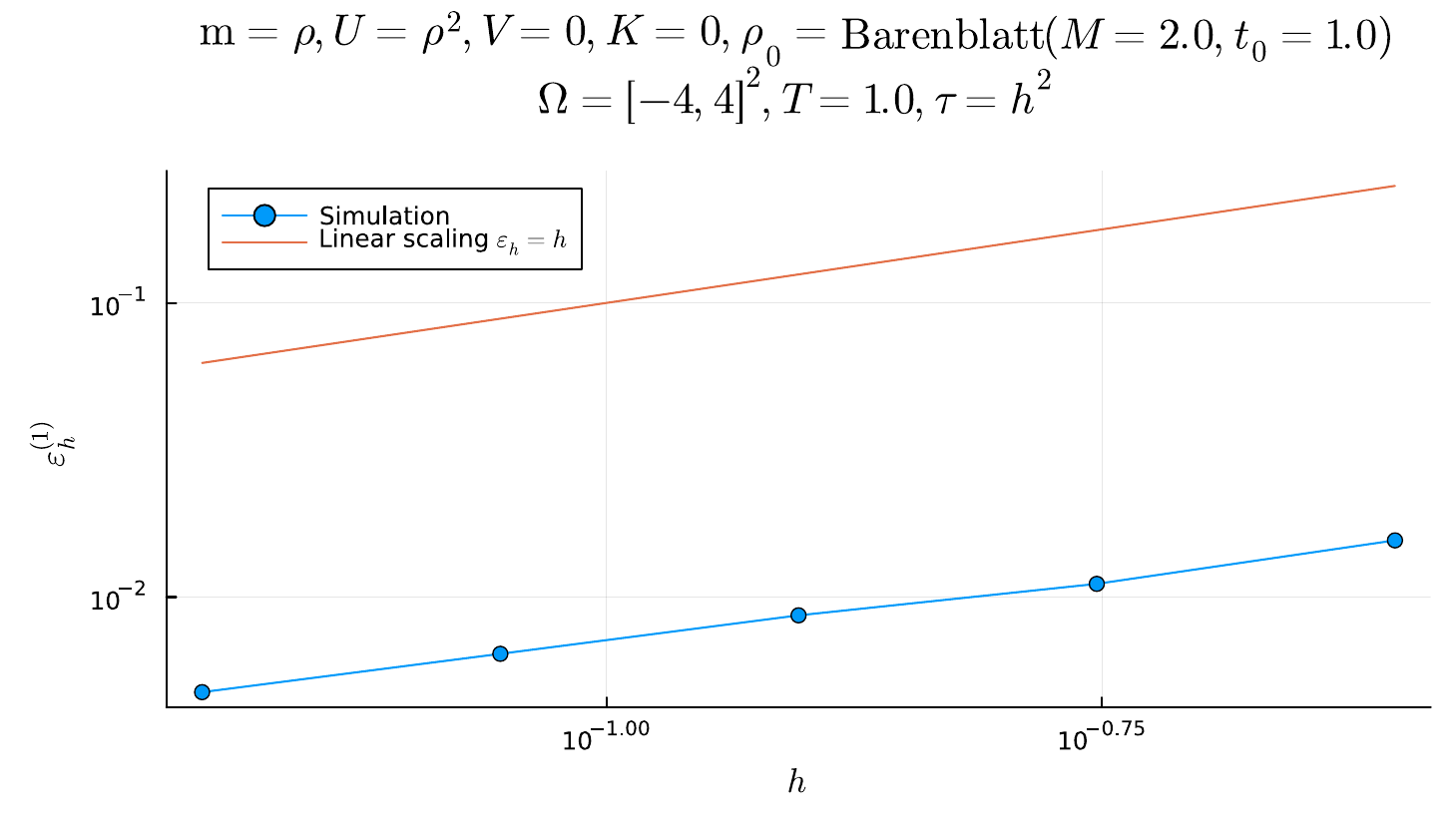}
  \caption{Convergence rate as $h\to 0$ for Porous-Medium Equation ($\mob(\rho) = \rho$, $U(\rho)=\rho^2$, $V = K = 0$) in dimension $d=1$ and $d=2$ with $\rho_0(x) = B_M(t_0,x)$ the self-similar solution at time $t_0 = 1$ and mass $M = 2$.}
  \label{fig:numerics Barenblatt}
\end{figure}

\subsection{Convergence as \texorpdfstring{$h \to 0$}{h→0} in a drift-diffusion equation with saturation}
When there is no explicit solution to compare against, we can study the convergence rate by splitting $h$ in half. Informally speaking, assume that the numerical error $e_h \approx Ch^\alpha$ with $\alpha > 0$. Then $e_{h/2} \approx 2^{-\alpha} e_h$.
Let us consider the error of splitting $h$ in half
\[
  e_h^{(2)} \defeq \| \rho_h - \rho_{h/2} \|_{L^1((0,T) \times \Omega)}.
\]
Using the triangular we have that
\begin{equation*}
  e_h^{(2)} \le e_h + e_{h/2} \lesssim (1 + 2^{-\alpha}) e_h, \qquad
  e_h \le e_h^{(2)} + e_{h/2} \lesssim e_h^{(2)} + 2^{-\alpha} e_h.
\end{equation*}
We conclude that
\begin{equation}
  (1-2^{-\alpha}) e_h \lesssim e_h^{(2)} \lesssim (1 + 2^{-\alpha}) e_h.
\end{equation}
Thus, in informal terms, $e_h^{(2)}$ is a good estimator for the order of convergence and, furthermore, it is computable.
Since $\bm I_h \subset \bm I_{h/2}$ and we are taking $\tau_h = h^p$ with $p \in \mathbb N$ then the time grid of $\rho_{h/2}$ is  we can roughly approximate $e_h^{(2)}$ by
\begin{equation*}
  \varepsilon_h^{(2)}
  \defeq \tau_h \sum_{n=0}^{N_h} h^d \sum_{\bm i \in \bm I_h} |\rho_h (\tau n, h \bm i) - \rho_{h/2} (\tau n, h \bm i)|.
\end{equation*}
Notice that $\rho_{h/2}$ is precisely evaluated on a mesh point if $\tau = h^p$ with $p \in \mathbb N$.
We provide a numerical experiment testing the convergence rate in $d=1$ for an example in an example with asymptotic limit exhibiting free boundary at levels $\rho = 0$ and $\rho = 1$, see \Cref{fig:convergence rates}. The convergence rate seems to be linear.
\begin{figure}[H]
  \centering
  \includegraphics[width=0.45\textwidth]{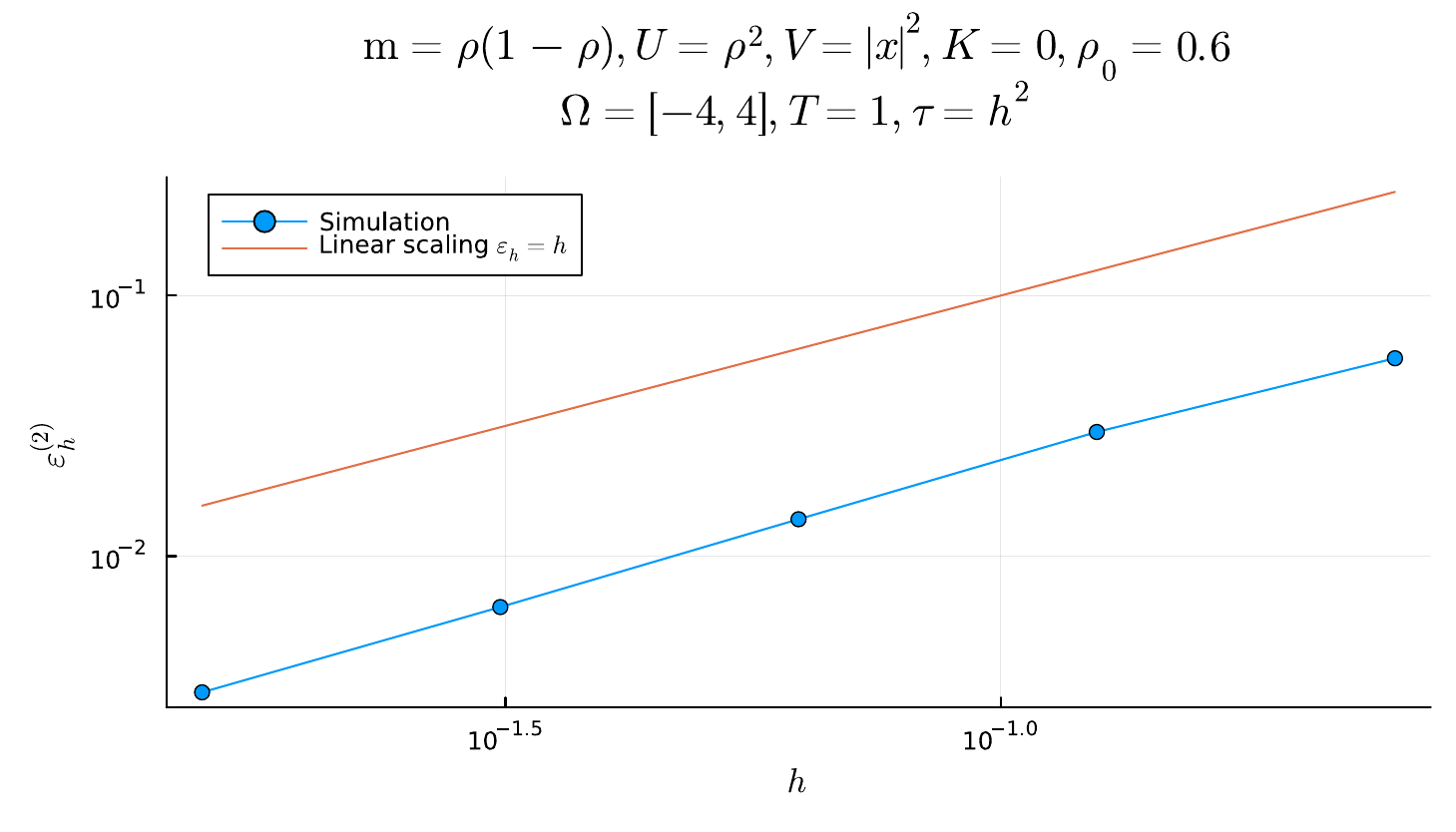}
  \caption{Convergence rate as $h \to 0$ for a drift-diffusion equation with saturation in dimension $d=1$, using as initial datum a constant.}
  \label{fig:convergence rates}
\end{figure}
The computation cost increases significantly when $K \ne 0$, and we have opted to not run convergence runs in this case.

\section{Extensions and open problems}
\label{sec:extensions}
We provide several comments and open questions
\begin{enumerate}
      \item In \Cref{thm:convergence for data away from 0 mob Lipschitz,thm:convergence with free boundary} we provide examples of assumptions of $U, V, K$ so that convergence holds. We have focused on Porous-Medium-type diffusion problems (where $U \ne 0$), due to our $H^1$-space compactness approach. More general settings are interesting.

            For example the Keller--Segel model or the Porous--Medium Equation with non-local pressure (see, e.g., \cite{stanFiniteInfiniteSpeed2016,stan2019ExistenceWeakSolutions}), use singular kernel which are not Lipschitz and, in fact, may be singular at the origin. An interesting open problem is discussing more general hypothesis such that the convergence results here presented still hold.
            An extension of the results of this paper to the Porous--Medium Equation with non-local pressure is a work in preparation by the author.

      \item
            In this paper we prove convergence by compactness methods. Our numerical experiments suggest that the convergence is linear in $h$, but this rate is not rigorously known. Proving this result is an interesting, and difficult, open problem.

      \item
            This paper focus on square grids $h \mathbb Z^d$. These grids are not well suited to some domain. For example, see \Cref{fig:steady states}. It is well-known that finite-volume methods can be developed in general grids (see, e.g., \cite{EymardGallouetHerbin1997}).
            An open problem is whether the scheme of this paper can be adapted to this setting.

      \item In \cite{CarrilloFernandez-JimenezGomez-Castro} the authors discuss asymptotics in discrete time (i.e., $n \to \infty$) when $K = 0$, and show the numerical scheme has a time limit. We expect that the behaviour is the same when $K \ne 0$. A detailed study of this phenomenon and the characterisation of the numerical steady states in  cases with free boundaries are interesting open problem.

      \item This scheme can be extended to systems of equations.

\end{enumerate}

\section*{Acknowledgements}
The author is thankful to Félix del Teso and Espen R. Jakobsen for fruitful discussions during the preparation of this manuscript.
The research of DGC was supported by grants RYC2022-037317-I and PID2023-151120NA-I00 from the Spanish Government MCIN/AEI/10.13039/501100011033/FEDER, UE.

\emergencystretch=1em
\printbibliography

\end{document}